\tikzset{anchorbase/.style={baseline={([yshift=-0.5ex]current bounding box.center)}}}
\tikzstyle directed=[postaction={decorate,decoration={markings,
    mark=at position #1 with {\arrow{>}}}}]
\tikzstyle rdirected=[postaction={decorate,decoration={markings,
    mark=at position #1 with {\arrow{<}}}}]
\tikzset{double line with arrow/.style args={#1,#2}{decorate,decoration={markings,%
mark=at position 0 with {\coordinate (ta-base-1) at (0,1pt);
\coordinate (ta-base-2) at (0,-1pt);},
mark=at position 1 with {\draw[#1] (ta-base-1) -- (0,1pt);
\draw[#2] (ta-base-2) -- (0,-1pt);
}}}}
\tikzset{Equal/.style={-,double line with arrow={-,-}}}
 \newlength{\baseunit}               
\newtheorem{thm}{Theorem}
\newtheorem{theorem}[subsection]{Theorem}
\newtheorem{lemma}[theorem]{Lemma}
\newtheorem{prop}[theorem]{Proposition}
\newtheorem{conjecture}[theorem]{Conjecture}
\theoremstyle{definition}
\newtheorem{remark}[theorem]{Remark}
\newtheorem{example}[subsection]{Example}
\newtheorem{question}[theorem]{Question}
\newcommand{\mG}{\mathbb{G}}
\newcommand{\bk}{\mathbf{k}}
\newcommand{\cA}{\mathcal{A}}
\newcommand{\cB}{\mathcal{B}}
\newcommand{\cC}{\mathcal{C}}
\newcommand{\cD}{\mathcal{D}}
\numberwithin{equation}{section}
\newcommand{\cJ}{\mathcal{J}}
\newcommand{\cM}{\mathcal{M}}
\newcommand{\cT}{\mathcal{T}}
\newcommand{\cI}{\mathcal{I}}
\newcommand{\cN}{\mathcal{N}}
\newcommand{\ev}{\mathrm{ev}}
\newcommand{\coev}{\mathrm{coev}}
\newcommand{\im}{\mathrm{im}}
\newcommand{\ch}{\mathrm{ch}}
\newcommand{\Spec}{\mathrm{Spec}}
\newcommand{\Ver}{\mathsf{Ver}}
\newcommand{\Comod}{\mathsf{Comod}}
\newcommand{\Rep}{\mathsf{Rep}}
\newcommand{\unit}{{\mathbbm{1}}}
\newcommand{\cO}{\mathcal{O}}
\newcommand{\mN}{\mathbb{N}}
\newcommand{\mZ}{\mathbb{Z}}
\newcommand{\mR}{\mathbb{R}}
\newcommand{\End}{\mathrm{End}}
\newcommand{\Ob}{\mathrm{Ob}}
\newcommand{\Mor}{\mathrm{Mor}}
\newcommand{\Ext}{\mathrm{Ext}}
\newcommand{\Hom}{\mathrm{Hom}}
\newcommand{\Sym}{\mathrm{Sym}}
\newcommand{\id}{\mathrm{id}}
\newcommand{\St}{\mathrm{St}}
\newcommand{\perf}{\mathrm{perf}}
\newcommand{\Lie}{\mathrm{Lie}}
\newcommand{\sVec}{\mathsf{sVec}}
\newcommand{\Tilt}{\mathsf{Tilt}}
\newcommand{\height}{\operatorname{ht}}
\newcommand{\Fr}{\mathrm{Fr}}
\newcommand{\GL}{\mathrm{GL}}
\newcommand{\PGL}{\mathrm{PGL}}
\newcommand{\PSL}{\mathrm{PSL}}
\newcommand{\SL}{\mathrm{SL}}
\begin{document}
\title{Higher Verlinde categories of reductive groups}
\author{Joseph Newton}
\address{School of Mathematics and Statistics, University of Sydney, Australia}
\email{j.newton@maths.usyd.edu.au}
\date{\today}

\keywords{}

\setcounter{tocdepth}{1}

\begin{abstract}
We define tensor categories $\Ver_{p^n}(G)$ in characteristic $p$ for connected reductive groups $G$ and positive integers $n$, generalising the semisimple Verlinde categories $\Ver_p(G)$ originating from Gelfand-Kazhdan and the higher Verlinde categories $\Ver_{p^n}$ for $\SL_2$ defined by Benson-Etingof-Ostrik. The construction is based on the definition of $\Ver_{p^n}$ as an abelian envelope of a quotient of a category of tilting modules, but we also introduce an expanded construction which refines the $\SL_2$ case and gives new results. In particular, the union $\Ver_{p^\infty}(G)$ can be derived from the perfection of $G$; certain exact sequences in $\Rep G$ map to exact sequences in $\Ver_{p^n}(G)$; and the underlying abelian category of $\Ver_{p^n}$ can be expressed as a subcategory of $\Rep\SL_2$, or as a Serre quotient of a subcategory of $\Rep\SL_2$. 
\end{abstract}

\maketitle

\tableofcontents

\section*{Introduction}

The semisimple Verlinde category $\Ver_p$, a symmetric tensor category over a field $\bk$ of characteristic $p>0$, can be defined as $\Tilt\SL_2/\cI$ where $\Tilt\SL_2$ is the category of tilting modules of $\SL_2$ and $\cI$ is the tensor ideal of negligible morphisms. There are two known generalisations of this category, which are each important to the study of symmetric tensor categories in different ways:
\begin{enumerate}
\item  Replacing $\SL_2$ with an arbitrary simple algebraic group $G$ produces a semisimple symmetric tensor category $\Ver_p(G)$. These categories were introduced in \cite{GK}, and their structure has been examined further in \cite{CEN}. In particular, $\Ver_p(G)$ fibres over $\Ver_p$ via restriction to a principal $\SL_2$, and can alternately be constructed by taking representations of the Lie algebra of $G$ in $\Ver_p$. It is conjectured in \cite{asymptotic} that all finitely-generated semisimple symmetric tensor categories of moderate growth can be constructed from $\Ver_p(G)$ for various $G$ via Deligne products, equivariantisation, and changes of braiding. Similar categories exist in the non-symmetric setting via quantum groups, see \cite{AP} and \cite[\S 8.18]{EGNO}.
\item  Replacing $\cI$ with a different tensor ideal in $\Tilt\SL_2$ produces an additive monoidal category which has an abelian envelope denoted $\Ver_{p^n}$. These envelopes were shown to exist in \cite{MAE} and \cite{BEO}, and their structure is examined in the latter paper. In particular, $\Ver_{p^n}$ is incompressible, non-semisimple for $n\geq2$, and has an inclusion functor $\Ver_{p^n}\hookrightarrow\Ver_{p^{n+1}}$ for $n\geq1$ allowing the union $\Ver_{p^\infty}$ to be constructed. It is conjectured in \cite{BEO} that all symmetric tensor categories of moderate growth fibre over $\Ver_{p^\infty}$, which would effectively reduce the study of such categories to the study of affine group schemes in $\Ver_{p^\infty}$ by the theory of \cite{incompressible}. More general categories exist in the non-symmetric setting via the quantum group of $\SL_2$ in positive characteristic, see \cite{STWZ} and \cite{decoppet}.
\end{enumerate}

In this paper we define categories $\Ver_{p^n}(G)$ combining these two generalisations. This is made possible by a theorem of \cite{Stroinski} (reformalised in \cite{tensorideals2}) which guarantees the existence of an abelian envelope for categories with a minimal tensor ideal under certain conditions. Our initial definition and construction of $\Ver_{p^n}(G)$ is similar to that of $\Ver_{p^n}$ in \cite{BEO}, however we also show that we may replace $\Tilt G$ in the construction with a bigger category $\overline\cT_n\subset\Rep G$, which is large enough that for $G=\SL_2$ the functor $\overline\cT_n\to\Ver_{p^n}$ is essentially surjective. This allows us to prove many results of \cite{BEO} much more efficiently while extending them to arbitrary $G$, for instance showing that the inclusion functor $\Ver_{p^n}(G)\hookrightarrow\Ver_{p^{n+1}}(G)$ arises directly from the Frobenius twist in $\Rep G$. This method also gives new insights into exact sequences in $\Ver_{p^n}$, including those defining symmetric and exterior powers of certain objects (see Proposition~\ref{PropExactSeq}), which have previously been elusive.

The main theorem of this paper is as follows:

\begin{thm}\label{ThmMain}
Let $\bk$ be an algebraically closed field with characteristic $p>0$, and let $G$ be a connected reductive linear algebraic group such that the Coxeter number $h$ of $G$ satisfies $p\geq\max(h,2h-4)$. Fix a principal map $\phi:\SL_2\to G$ giving a functor $F:\Tilt G\to\Tilt\SL_2$ (see Section~\ref{SecPrincipal}). For an integer $n\geq1$, if $\cI_n(\SL_2)$ is the tensor ideal in $\Tilt\SL_2$ such that the abelian envelope of $\Tilt\SL_2/\cI_n(\SL_2)$ is $\Ver_{p^n}$, then $\Tilt G/F^{-1}(\cI_n(\SL_2))$ has an abelian envelope $\Ver_{p^n}(G)$ with the following properties:
\begin{enumerate}
\item\label{ItemProj} $\Ver_{p^n}(G)$ is a finite tensor category whose indecomposable projective objects are the images of tilting modules $T(\lambda)$ with $\lambda\in((p^{n-1}-1)\rho+\Lambda_{n-1}+p^{n-1}A)\cap X(T)$, where $A$ is the fundamental alcove and $\Lambda_{n-1}$ is the set of $p^{n-1}$-restricted weights as defined in Section~\ref{SecGCover}.
\item\label{ItemAbEnv} $\Ver_{p^n}(G)$ is also an abelian envelope of $\overline\cT_n/\overline\cI_n$, where $\overline\cT_n$ is the full subcategory of $\Rep G$ consisting of objects $X$ such that $X\otimes\St_{n-1}$ is a tilting module (of a cover of $G$ in which the Steinberg module $\St_{n-1}$ is well-defined), and $\overline\cI_n$ is a tensor ideal in $\overline\cT_n$ defined in Section~\ref{SecBarDef}.
\item\label{ItemSimps} The simple objects in $\Ver_{p^n}(G)$ are the images of the simple modules $L(\lambda)\in\overline\cT_n$ for $\lambda\in(\Lambda_{n-1}+p^{n-1}A)\cap X(T)$. These satisfy a Steinberg tensor product theorem, meaning that if $\lambda=\lambda_0+p\lambda_1+\cdots+p^{n-1}\lambda_{n-1}$ with $\lambda_i\in\Lambda_1\cap X(T)$ for all $i$ and $\lambda_{n-1}\in A$, then $L(\lambda)=L(\lambda_0)\otimes L(p\lambda_1)\otimes\cdots\otimes L(p^{n-1}\lambda_{n-1})$ in $\Ver_{p^n}(G)$.
\item\label{ItemPrin} There are tensor functors $\Ver_{p^n}(G)\to\Ver_{p^n}$ commuting with the restriction functors $\overline\cT_n(G)\to\overline\cT_n(\SL_2)$ coming from $\phi$.
\item\label{ItemFrob} There are inclusion functors $\Ver_{p^n}(G)\hookrightarrow\Ver_{p^{n+1}}(G)$ commuting with the Frobenius twist functors $\overline\cT_n\to\overline\cT_{n+1}$, $X\mapsto X^{(1)}$.
\item\label{ItemDecomp} If $p>h$, then we have $\Ver_{p^n}(G)\simeq\Ver_{p^n}(G/Z)\boxtimes\Rep_{\sVec}(Z,z)$, where $Z$ is the centre of $G$ and $z$ is the map $\mZ/2\to Z$ found by restricting $\phi$ to the centres.
\item\label{ItemPerf} The union $\Ver_{p^\infty}(G)\coloneqq\bigcup_{n=1}^\infty\Ver_{p^n}(G)$ via the inclusions $\Ver_{p^n}(G)\to\Ver_{p^{n+1}}(G)$ is the abelian envelope of $\overline\cT_\infty/\overline\cI_\infty$ for a full subcategory $\overline\cT_\infty$ of $\Rep(G_{\rm perf})$ and tensor ideal $\overline\cI_\infty$ in $\overline\cT_\infty$ defined in Section~\ref{SecPerf}, where $G_{\rm perf}$ is the perfection of $G$.
\item\label{ItemExact} The functor $\overline\cT_n\to\Ver_{p^n}(G)$ sends bounded exact sequences to exact sequences.
\end{enumerate}
\end{thm}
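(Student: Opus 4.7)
The plan is to reduce the theorem to the abelian-envelope criterion of \cite{Stroinski} (reformulated in \cite{tensorideals2}), with structural input from the $\SL_2$ case of \cite{BEO} transported along the principal restriction $F:\Tilt G\to\Tilt\SL_2$. First I would verify that $F^{-1}(\cI_n(\SL_2))$ is a tensor ideal in $\Tilt G$ whose quotient $\Tilt G/F^{-1}(\cI_n(\SL_2))$ satisfies the hypotheses of the criterion: Karoubian, with a minimal tensor ideal, and with appropriate survivals of the identities on certain $T(\lambda)$. Since the surviving tilting modules for $\SL_2$ are those with highest weight in the $\SL_2$ version of the region $(p^{n-1}-1)\rho+\Lambda_{n-1}+p^{n-1}A$, pulling back along $F$ gives exactly the stated weight set in $X(T)$, establishing item (\ref{ItemProj}). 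The Steinberg tensor product theorem in $\Rep G$ then yields item (\ref{ItemSimps}), and the universal property of abelian envelopes gives the descent $\Ver_{p^n}(G)\to\Ver_{p^n}$ of item (\ref{ItemPrin}).

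The heart of the argument is item (\ref{ItemAbEnv}): enlarging $\Tilt G$ to $\overline\cT_n$ without changing the abelian envelope. The intuition is that the condition ``$X\otimes\St_{n-1}$ tilting'' is precisely what guarantees $X$ has a well-behaved image in $\Ver_{p^n}(G)$, since multiplying by $\St_{n-1}$ detects the projectives of the envelope. I would show (i) $\overline\cT_n\subset\Rep G$ is closed under tensor products, using multiplicativity of Steinberg across the principal cover, and (ii) the ideal $\overline\cI_n$ -- defined by morphisms that become negligible after $\otimes\St_{n-1}$ -- restricts to $F^{-1}(\cI_n(\SL_2))$ on $\Tilt G$. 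The resulting fully faithful inclusion $\Tilt G/F^{-1}(\cI_n(\SL_2))\hookrightarrow\overline\cT_n/\overline\cI_n$ then identifies their abelian envelopes, since both have the same indecomposable projectives. Item (\ref{ItemExact}) is an immediate corollary: a bounded exact sequence in $\overline\cT_n$ tensored with $\St_{n-1}$ becomes an exact sequence of tilting modules and hence splits into short exact sequences, whose images in $\Ver_{p^n}(G)$ remain exact because $\St_{n-1}$ is invertible there.

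For items (\ref{ItemFrob}) and (\ref{ItemPerf}), the Frobenius twist $X\mapsto X^{(1)}$ multiplies weights by $p$ and so sends $\overline\cT_n$ into $\overline\cT_{n+1}$; combined with the universal property it induces $\Ver_{p^n}(G)\hookrightarrow\Ver_{p^{n+1}}(G)$. The colimit $\Ver_{p^\infty}(G)$ then admits a clean description via the perfection $G_{\rm perf}=\varprojlim(G\xleftarrow{\Fr}G\xleftarrow{\Fr}\cdots)$, since Frobenius pullback corresponds exactly to the twist functor; one defines $\overline\cT_\infty$ and $\overline\cI_\infty$ as the colimits over $n$ inside $\Rep(G_{\rm perf})$ and verifies that their abelian envelope agrees with the union. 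Item (\ref{ItemDecomp}) follows from analysing the central grading: when $p>h$ the principal $\SL_2$ meets the centre $Z$ only through $z:\mZ/2\to Z$, so the central grading on $\Ver_{p^n}(G)$ splits off as $\Rep_{\sVec}(Z,z)$ via a Deligne product decomposition.

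The main obstacle is item (\ref{ItemAbEnv}): verifying that $\overline\cT_n$ is tensor-closed and that $\overline\cI_n$ matches $F^{-1}(\cI_n(\SL_2))$ on $\Tilt G$, because $\overline\cT_n$ contains non-tilting modules where tilting-filtration arguments do not apply directly. The key tools here should be the translation principle, the multiplicative behaviour of the Steinberg module under the principal $\phi$, and a careful weight-by-weight analysis of composition factors of $L(\lambda)\otimes\St_{n-1}$. Once item (\ref{ItemAbEnv}) is established, the remaining parts follow from the universal property of abelian envelopes combined with the weight combinatorics of the fundamental alcove.
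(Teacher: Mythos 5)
Your outline correctly identifies the overall architecture---define a suitable subcategory of $\Tilt G$ (or $\Rep G$), check the hypotheses of the abelian-envelope criterion, transport structure along the principal $F:\Tilt G\to\Tilt\SL_2$---but several steps that you treat as routine are where the real work lives, and a couple of your justifications are actually false. Most seriously, for item (\ref{ItemExact}) you claim the images of the split pieces of $\Sigma\otimes\St_{n-1}$ ``remain exact because $\St_{n-1}$ is invertible'' in $\Ver_{p^n}(G)$. The Steinberg module is \emph{not} invertible there: its image is a nontrivial projective object and $\St_{n-1}\otimes\St_{n-1}^*$ is far from $\unit$ (it contains $T(2(p^{n-1}-1)\rho)$ as a summand). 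The actual mechanism is that $-\otimes\St_{n-1}$ is an exact and faithful endofunctor of the tensor category $\Ver_{p^n}(G)$, hence reflects exactness. Moreover, ``a bounded exact sequence of tilting modules splits into short exact sequences'' is not elementary for sequences of length $>3$; the paper goes through the equivalence $K^b(\Tilt G)\simeq D^b(\Rep G)$ for highest weight categories (cf.~Proposition~\ref{PropExactSeq}) to conclude null-homotopy.

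There are two further gaps. First, item (\ref{ItemProj}) is not obtained by ``pulling back the weight set along $F$''---$F^{-1}$ of an $\SL_2$-weight condition does not directly carve out the set $(p^{n-1}-1)\rho+\Lambda_{n-1}+p^{n-1}A$. The characterisation in the paper is intrinsic: $T(\lambda)\in J_n$ iff $T(\lambda)$ is projective over $G_{n-1}T$ (Jantzen, Lemma E.8), and $I_n$ is carved out via Donkin's tensor product theorem; the compatibility with $F^{-1}(\cI_n(\SL_2))$ is then a separate theorem (Proposition~\ref{PropRestriction1}) needing a character computation for $F(\St_{n-1})$ (Lemma~\ref{LemStImage}). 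Second, you assert that the quotient category has ``a minimal tensor ideal'' so the Stroi\'nski/CEO criterion applies, but verifying condition (1) of Theorem~\ref{ThmCEOEnv}---that $J_n$ is the unique minimal thick ideal strictly above $I_n$---is a genuine theorem relying on Andersen's cell theory (\cite[Proposition~14]{AnCells}, used in Lemma~\ref{LemThickIdeal}), and the criterion also requires uniqueness of the tensor ideal over $I_n$, proved via the Steinberg-splitting trick of Proposition~\ref{PropUniqueIdeal}. Without addressing those two ingredients the criterion does not apply.
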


The condition $p\geq 2h-4$ ensures that Donkin's tensor product theorem holds (as well as some related results, see \cite{dconj3}), while $p\geq h$ ensures that the fundamental alcove of $G$ is non-trivial. Conjecturally, the former condition could be loosened without changing the theorem above. On the other hand, taking $p<h$ is expected to give meaningfully different results, and the papers \cite{BEEO} and \cite{BT} have developments in this direction for $n=1$.

The categories $\Ver_{p^n}(G)$ for $n\geq2$ are some of the first explicit examples of tensor categories that fibre over $\Ver_{p^n}$. An important task for future study would be to understand these categories from the perspective of Tannakian formalism as in \cite[\S 4]{incompressible}, or in other words to describe the affine group scheme in $\Ver_{p^n}$ whose category of representations is equivalent to $\Ver_{p^n}(G)$. This has been done for $n=1$ in \cite[\S 3.3]{CEN} using the methods of \cite{Ve1}, however no similar methodology currently exists for $n\geq2$. Another avenue for development is to generalise the results of \cite{decoppet}, regarding higher Verlinde categories of the quantum group of $\SL_2$, to arbitrary reductive groups. We show that such a generalisation exists for $p\geq 2h-2$ in Remark~\ref{RemQuantum}, however its properties deserve further study.

For $G=\SL_2$, the methods introduced in this paper also give a more explicit description of the underlying abelian category of $\Ver_{p^n}$:

\begin{thm}\label{ThmSL2}
We define the following full subcategories of $\Rep\SL_2$:
\begin{enumerate}
\item $\cA_n$ consists of objects with weights strictly less than $p^n-1$;
\item $\cB_n$ is the Serre subcategory of $\cA_n$ with simple objects $L_i$ for $(p-1)p^{n-1}\leq i<p^n-1$;
\item $\cC_n$ consists of objects $X\in\cA_n$ with $\Hom(X,B)=0=\Hom(B,X)$ for all $B\in\cB_n$.
\end{enumerate}
Then we have equivalences of abelian categories
\[\cC_n\simeq\cA_n/\cB_n\simeq\Ver_{p^n}\]
where $\cA_n/\cB_n$ denotes the Serre quotient.
\end{thm}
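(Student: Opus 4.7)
The plan is to deduce $\cA_n/\cB_n\simeq\Ver_{p^n}$ from Theorem~\ref{ThmMain}(\ref{ItemAbEnv}) applied to $G=\SL_2$, and then derive $\cC_n\simeq\cA_n/\cB_n$ by a direct Serre-quotient computation. The key input, noted in the introduction, is that for $\SL_2$ the functor $\overline\cT_n\to\Ver_{p^n}$ is essentially surjective, so $\Ver_{p^n}$ is already realised inside $\Rep\SL_2$ without need of abstract envelope completion.

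For $\cA_n/\cB_n\simeq\Ver_{p^n}$: by Theorem~\ref{ThmMain}(\ref{ItemSimps}) the simples of $\Ver_{p^n}$ correspond (using $\rho=1$ for $\SL_2$ and $A=\{0,\ldots,p-2\}$) to weights in $\Lambda_{n-1}+p^{n-1}A=\{0,\ldots,(p-1)p^{n-1}-1\}$, precisely matching the simples of $\cA_n$ not belonging to $\cB_n$. For each such simple $L(\lambda)$ one checks $L(\lambda)\in\overline\cT_n$ via Steinberg's decomposition and Donkin's tilting formula (since $\lambda_{n-1}\leq p-2$ in this range, the factor $L(\lambda_{n-1})^{(n-1)}$ combines with $\St_{n-1}$ into a tilting module). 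Since $\cA_n$ is a Serre subcategory of $\Rep\SL_2$ and $\cB_n$ is Serre in it, the quotient $\cA_n/\cB_n$ is abelian. The composition $\overline\cT_n\cap\cA_n\to\cA_n/\cB_n$ is exact by Theorem~\ref{ThmMain}(\ref{ItemExact}), annihilates $\overline\cI_n$ (as morphisms become zero iff they factor through simples in $\cB_n$), and is essentially surjective, so the universal property of the abelian envelope yields the equivalence.

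For $\cC_n\simeq\cA_n/\cB_n$: let $\iota\colon\cC_n\hookrightarrow\cA_n\twoheadrightarrow\cA_n/\cB_n$. Essential surjectivity: for $X\in\cA_n$, take the largest subobject $X^B\in\cB_n$ (which exists by finite length), set $X_1=X/X^B$, then take the smallest subobject $X_2\subseteq X_1$ with $X_1/X_2\in\cB_n$ (exists since the family of such subobjects is closed under intersection, using that $\cB_n$ is Serre). One verifies $X_2\in\cC_n$: the first orthogonality follows from the construction of $X_1$, and the second from the minimality of $X_2$ (any map $X_2\to B\in\cB_n$ with kernel $Z$ gives $X_1/Z\in\cB_n$, forcing $Z=X_2$). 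Moreover $X_2\cong X$ in $\cA_n/\cB_n$. Fully faithfulness: for $X,Y\in\cC_n$, the Serre-quotient colimit for $\Hom_{\cA_n/\cB_n}(X,Y)$ runs over diagrams $X'\subseteq X$ with $X/X'\in\cB_n$ and $Y'\subseteq Y$ with $Y'\in\cB_n$, but $\Hom(X,\cB_n)=0$ forces $X'=X$ and $\Hom(\cB_n,Y)=0$ forces $Y'=0$, collapsing the colimit to $\Hom_{\cA_n}(X,Y)$.

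The main obstacle is verifying the claim that each simple $L(\lambda)$ with $\lambda<(p-1)p^{n-1}$ lies in $\overline\cT_n$ for $\SL_2$, which requires concrete tilting calculations using Steinberg's theorem and Donkin's tensor product decomposition of tilting modules; the remaining steps are essentially formal once this and the essential surjectivity of $\overline\cT_n\to\Ver_{p^n}$ from the introduction are in hand.
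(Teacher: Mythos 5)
Your second step, $\cC_n\simeq\cA_n/\cB_n$, is correct and is essentially the argument of Section~\ref{SecSerreQuo} of the paper (you form $X^\star$ then $X_\star$, the paper forms $X_\star$ then $X^\star$; the two compositions agree). The first step, however, has genuine gaps.

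The appeal to ``the universal property of the abelian envelope'' to deduce $\cA_n/\cB_n\simeq\Ver_{p^n}$ does not apply: the universal property (Section~\ref{SecAbEnv}) is a statement about tensor categories and pseudo-tensor functors, whereas $\cA_n$ is not a monoidal subcategory of $\Rep\SL_2$ (tensoring two objects with weights $<p^n-1$ produces weights $\geq p^n-1$), so $\cA_n/\cB_n$ is merely abelian, not tensor. One can use exactness of $\cA_n\to\Ver_{p^n}$ (which needs Theorem~\ref{ThmMain}(\ref{ItemExact})) together with the fact that objects of $\cB_n$ die (Lemma~\ref{LemSimpTilt}) to get a functor $\cA_n/\cB_n\to\Ver_{p^n}$ from the universal property of the Serre quotient, but that only gives you a functor, not an equivalence. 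You must then prove faithfulness, fullness, and essential surjectivity, and none of these is formal. Fullness in particular cannot be inherited from the functor $\overline\cT_n/\overline\cI_n\to\Ver_{p^n}$, since that functor is not full (Remark~\ref{RemNonFull}); the paper's Lemma~\ref{LemCleanIso} proves fullness on $\cC_n$ by an argument using projective covers and injective hulls that is specific to $\SL_2$. Likewise, citing ``essential surjectivity of $\overline\cT_n\to\Ver_{p^n}$ from the introduction'' is circular: that assertion in the introduction summarizes Theorem~\ref{ThmEquiv1}, which is the very statement being proven here, and the paper's proof of it (constructing preimages of arbitrary objects of $\Ver_{p^n}$ as images of maps between preimages of projectives, and verifying via $-\otimes\St_{n-1}$ and splitness of tilting sequences that the projection becomes an epimorphism) is substantive.

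Finally, your diagnosis that the ``main obstacle'' is checking $L(\lambda)\in\overline\cT_n$ is misplaced: this is the easy part, following immediately from \cite[Lemma~4.3.4]{MAE} (as recorded in Lemma~\ref{LemSimpTilt}). The real work, which your proposal skips, is in Lemma~\ref{LemCleanIso} and Theorem~\ref{ThmEquiv1}.
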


The paper is organised as follows. In Section~\ref{SecPrelim} we give some background on algebraic groups and tensor categories, and explain the assumptions on $G$ in Theorem~\ref{ThmMain}. In Section~\ref{SecDefinition} we define $\Ver_{p^n}(G)$ using a subcategory $\cT_n$ of $\Tilt G$, and show that it fibres over $\Ver_{p^n}$. In Section~\ref{SecExpansion} we expand this construction to a larger subcategory $\overline\cT_n$ of $\Rep G$, and show that this matches the definition in Theorem~\ref{ThmMain}. In Section~\ref{SecProperties} we use this expanded construction to derive the properties of $\Ver_{p^n}(G)$ listed in Theorem~\ref{ThmMain}. In Section~\ref{SecVerpnSL2} we consider $G=\SL_2$ and prove Theorem~\ref{ThmSL2}.

\subsection*{Acknowledgements} This work is part of PhD research supervised by Kevin Coulembier, who the author thanks for his guidance and feedback. The author also thanks Geordie Williamson for further supervision and discussions regarding $\SL_3$ and $U_q(\mathfrak{sl}_3)$, and Thibault D\'ecoppet for discussions regarding quantum groups and mixed Verlinde categories. This work was supported by an Australian Government Research Training Program Scholarship, and hindered by the release of \textit{Hollow Knight: Silksong}.

\section{Preliminaries and assumptions}\label{SecPrelim}

\subsection{}\label{SecAlgGrps} Throughout this paper, $\bk$ is an algebraically closed field of characteristic $p>0$, and $G$ is an affine group scheme over $\bk$ with the following properties:
\begin{enumerate}
\item $G$ is a connected reductive linear algebraic group;
\item $p\geq h$ where $h$ is the Coxeter number of $G$; and
\item Donkin's tensor product theorem holds.
\end{enumerate}
To clarify, the first condition means that $G$ is of finite type, reduced, irreducible, and has no non-trivial normal unipotent subgroups. The latter two conditions are elaborated on below.

\subsection{The cover $\widetilde G$ and its weights}\label{SecGCover} $G$ has a maximal torus $T$ and weight space $X(T)$. Since we may require a larger weight space, we will define a cover $\widetilde G$ with a ``maximal'' weight space. By \cite[\S II.1.18]{Jantzen} there are tori $T_1,T_2\subseteq T$ such that we have a quotient map $\cD G\times T_2\twoheadrightarrow G$ with finite kernel $T_1\cap T_2$, where $\cD G$ is the derived subgroup of $G$. Moreover, $\cD G$ has a simply-connected cover $\widetilde{\cD G}$. We then define $\widetilde G\coloneqq\widetilde{\cD G}\times T_2$ and write $\Lambda$ for the weight space of $\widetilde G$, so that $\widetilde G\twoheadrightarrow G$ is a cover in the sense of \cite[\S II.1.17]{Jantzen} and we have a canonical inclusion $X(T)\subseteq\Lambda$.

The groups $G$ and $\widetilde G$ have the same root system $\Phi$, Weyl group $W$ and affine Weyl group $\widetilde W_p$. We fix a Borel subgroup $B\subseteq G$ containing $T$, giving a set of positive roots $\Phi^+\subseteq\Phi$ and simple roots $\alpha_1,\dots,\alpha_r$ where $r$ is the rank of $\cD G$ ($\Phi$ is empty if $G$ is a torus). This gives sets of dominant weights $X(T)^+$ and $\Lambda^+$ for $G$ and $\widetilde G$ respectively. For $n\in\mN$ we write
\[\Lambda_n=\{\lambda\in\Lambda^+\mid 0\leq\langle\lambda,\alpha_i^\vee\rangle<p^n\text{ for }1\leq i\leq r\}\] 
for the set of $p^n$-restricted weights, and we write $X_n(T)=\Lambda_n\cap X(T)$.

We write $\varpi_1,\dots,\varpi_r\in\Lambda$ for the fundamental weights of $\widetilde{\cD G}$ (which are elements of $\Lambda$ via the quotient of $\widetilde G$ by $T_2$), and the Weyl vector is $\rho=\varpi_1+\cdots+\varpi_r=\frac12\sum_{\alpha\in\Phi^+}\alpha$. The $\rho$-shifted action of $\widetilde W_p$ on $\Lambda$ divides $\Lambda\otimes_\mZ\mR$ into alcoves, and we write $A\subseteq\Lambda^+$ for the weights in the fundamental alcove (meaning the alcove containing 0). We also write $\overline A\subseteq\Lambda^+$ for the weights in the upper closure of the fundamental alcove. If $G$ is a torus then we have $\rho=0$ and $A=\overline A=\Lambda=\Lambda^+$, and we set the Coxeter number $h$ to be 0. Otherwise, the root system $\Phi$ is a union of irreducible root systems $\Phi_1\cup\cdots\cup\Phi_m$, and by \cite[\S 3.1.2]{CEN} we have
\begin{align*}
A&=\{\lambda\in\Lambda^+\mid 0<\langle\lambda+\rho,\theta_j^\vee\rangle<p\text{ for }1\leq j\leq m\}\\
\overline A&=\{\lambda\in\Lambda^+\mid 0<\langle\lambda+\rho,\theta_j^\vee\rangle\leq p\text{ for }1\leq j\leq m\}
\end{align*}
where $\theta_j$ is the highest short root of $\Phi_j$. The Coxeter number of $\Phi_j$ is
\[h_j=1+\langle\rho,\theta_j^\vee\rangle=1+\sum_{i=1}^r k_i,\quad\text{where }\theta_j^\vee=\sum_{i=1}^r k_i\alpha_i^\vee\]
by \cite[\S 3.1.2]{CEN} again, and we define the Coxeter number of $G$ to be $h=\max\{h_1,\dots,h_m\}$. The condition $p\geq h$ ensures that $0\in A$, so $A$ is non-empty.

\subsection{Modules of $G$} We write $L(\lambda)$, $\Delta(\lambda)$, and $T(\lambda)$ for the simple, Weyl, and indecomposable tilting modules respectively of $\widetilde G$ with highest weight $\lambda\in\Lambda^+$. If $\lambda\in X(T)^+$, then these are also simple, Weyl and indecomposable tilting modules respectively of $G$. Recall that $L(\lambda)^*=L(-w_0\lambda)$ and $T(\lambda)^*=T(-w_0\lambda)$ where $w_0$ is the longest element of $W$ as a Coxeter group (see \cite[E.6]{Jantzen}). We write $\Rep G$ for the category of finite-dimensional representations of $G$, and $\Tilt G$ for the full subcategory of tilting modules. By the linkage principle, we have $T(\lambda)=L(\lambda)$ whenever $\lambda\in \overline A$.  For $n\in\mN$, the $n$-th Steinberg module is $\St_n(G)=L((p^n-1)\rho)=T((p^n-1)\rho)$. We just write $\St_n$ if $G$ is clear from context. A priori, $\St_n$ is only a module of $\widetilde G$, but if $p$ is odd then we have $(p^n-1)\rho\in X(T)$ automatically (since $\Phi\subset X(T)$) and $\St_n$ is a module of $G$. Examples of $G$ where $\St_n$ is not defined over $G$ in characteristic 2 are $\GL_2$ and $\PGL_2$.

For $n\in\mN$ and a representation $X$ of $G$ or $\widetilde G$, we write $X^{(n)}$ for the $n$-th Frobenius twist of $X$. Recall Steinberg's tensor product theorem \cite[Corollary II.3.17]{Jantzen}:
\[L(\lambda+p^n\mu)\cong L(\lambda)\otimes L(\mu)^{(n)}\text{ for $\lambda\in\Lambda_n$ and $\mu\in\Lambda^+$.}\]
By \textbf{Donkin's tensor product theorem} we mean the identity
\[T(\lambda+p^n\mu)\cong T(\lambda)\otimes T(\mu)^{(n)}\text{ for $\lambda\in(p^n-1)\rho+\Lambda_n$ and $\mu\in\Lambda^+$.}\]
This is known to hold for $p\geq2h-4$ by \cite{dconj3}, since it follows from Donkin's Tilting Module Conjecture by \cite[Lemma~E.9]{Jantzen}.

\subsection{Principal $\SL_2$ morphisms}\label{SecPrincipal} If $G$ is semisimple (meaning $G=\cD G$), then we recall from \cite[\S 0.4]{Testerman} that the order of any unipotent element $u\in G$ is $\min\{p^n\mid n\in\mN,p^n>\height(P)\}$, where the value $\height(P)$ is determined by a particular parabolic subgroup $P$ containing $u$. This value is at most $h-1$ by the formula in Section~\ref{SecGCover}, so our assumption $p\geq h$ from Section~\ref{SecAlgGrps} implies that the order of $u$ is always $p$. We can then apply \cite[Theorem~1]{McNinch} to obtain a morphism $\SL_2\to G$ with $u$ in its image.

A unipotent element $u\in G$ is called \textbf{regular} if the dimension of its centraliser $C_G(u)$ is minimal. Such an element $u$ always exists \cite[Proposition~5.1.2]{Carter}, and $\dim C_G(u)$ is the rank of $G$, meaning the dimension of $T$ \cite[\S 1.14]{Carter}. A morphism $\SL_2\to G$ is called \textbf{principal} if its image contains a regular unipotent element. If $G$ is a torus (that is $G=T_2$) then the identity element is regular, and thus any trivial morphism $\SL_2\to G$ is principal. Moreover, if $u$ is a regular unipotent element of $\widetilde{\cD G}$, then its image $v$ in $G$ is also regular. In particular, since the kernel of $\widetilde G=\widetilde{\cD G}\times T_2\twoheadrightarrow G$ has dimension zero, we have
\[\dim C_G(v)=\dim C_{\widetilde G}\big((u,1)\big)=\dim(C_{\widetilde{\cD G}}(u)\times T_2)=\dim T_1+\dim T_2=\dim T.\]
Hence, if a map $\SL_2\to\widetilde{\cD G}$ has $u$ in its image (such a map necessarily exists by the reasoning above), then the composition $\SL_2\to\widetilde{\cD G}\hookrightarrow\widetilde G\twoheadrightarrow G$ is also principal. Thus, every group $G$ under the assumptions of Section~\ref{SecAlgGrps} has a principal morphism factoring through $\widetilde{\cD G}$. In fact, every principal morphism to $G$ factors through $\widetilde{\cD G}$, since all principal morphisms $\SL_2\to G$ are conjugate to each other by \cite[Proposition~46]{McNinch2}. Therefore, since all pairs of a maximal torus and Borel subgroup are conjugate in $G$ by \cite[\S 1.7]{Carter}, there exists a principal morphism sending diagonal and upper triangular matrices of $\SL_2$ to $T$ and $B$ respectively.

Interpret the coroots of $G$ as elements of the Cartan subalgebra of $\Lie(G)$, and fix nilpotent elements $e_i,f_i\in\Lie(G)$ corresponding to the simple roots $\alpha_i$ so that each triple $e_i,f_i,\alpha_i^\vee$ generates a subalgebra isomorphic to $\mathfrak{sl}_2$. There is a map of Lie algebras $\mathfrak{sl}_2\to\Lie(G)$ sending $\left(\begin{smallmatrix}1&0\\0&-1\end{smallmatrix}\right)$ to $\sum_{\alpha\in\Phi^+}\alpha^\vee$ and $\left(\begin{smallmatrix}0&1\\0&0\end{smallmatrix}\right)$ to $\sum_i e_i$, and it is shown in \cite[\S 2]{Serre} that this integrates to a principal morphism $\psi:\SL_2\to G$ when $G$ is an adjoint-type simple group. Now let $G$ be arbitrary and suppose $\phi:\SL_2\to G$ is a principal morphism. If $\pi:G\to G/Z$ is the quotient by the centre $Z$ of $G$, then by the reasoning above, $\pi\circ\phi$ must be conjugate to the map $\psi$ on $G/Z$. Consequently, if $\phi$ restricts to a map $\phi':\mG_m\to T$ on the diagonal matrices of $\SL_2$, then $\phi$ induces a map $\phi^*:X(T)\to\mZ$ (identifying the weight space of $\SL_2$ with $\mZ$) given by
\[\phi^*(\lambda)=\langle\lambda,\phi'\rangle=\sum_{\alpha\in\Phi^+}\langle\lambda,\alpha^\vee\rangle.\]
This proves the weight formula in \cite[Proposition~3.2.1(3)]{CEN} for all $G$ satisfying the assumptions of Section~\ref{SecAlgGrps}.

\subsection{Tensor categories and ideals}\label{SecTensorCats} The definitions below follow \cite{tensorideals2}. A \textbf{pseudo-tensor category} is an essentially small $\bk$-linear additive Karoubi (i.e. idempotent-complete) category $\cT$ with a monoidal structure $\otimes$ such that $\otimes$ is rigid, $\End(\unit)\cong\bk$, and all morphism spaces are finite-dimensional. We write $\id_X$, $\ev_X$ and $\coev_X$ for the identity, evaluation and coevaluation morphisms on $X\in\cT$. A \textbf{pseudo-tensor functor} is a faithful $\bk$-linear monoidal functor between pseudo-tensor categories. A \textbf{tensor category} is an abelian pseudo-tensor category, and a \textbf{tensor functor} is an exact $\bk$-linear monoidal functor between tensor categories (which is necessarily faithful by \cite[2.10]{Del01}). In particular, $\Rep G$ is a tensor category and $\Tilt G$ is a pseudo-tensor category, and both of these also have a symmetric braiding. An \textbf{inclusion} of tensor categories is a full tensor functor that sends simples to simples (such functors are called ``injective'' in \cite{incompressible}). Equivalently, an inclusion $\cC\hookrightarrow\cC'$ is an equivalence of $\cC$ with a tensor subcategory of $\cC'$, meaning a full subcategory closed under products, subquotients, tensor products and duals (see \cite[Lemma~3.1.1]{incompressible}). For example, the functor $\Rep G\to\Rep\widetilde G$ coming from $\widetilde G\twoheadrightarrow G$ is an inclusion functor.

A \textbf{tensor ideal} $\cI$ in a pseudo-tensor category $\cT$ is a collection of $\bk$-vector spaces $\cI(X,Y)\subseteq\Hom(X,Y)$ for $X,Y\in\cT$ that is closed under composing or tensoring with any morphism in $\cT$. A \textbf{thick tensor ideal} $I$ in a pseudo-tensor category $\cT$ is a collection of objects in $\cT$ that is closed under tensoring with any object in $\cT$, and also closed under isomorphisms, direct sums and direct summands. The preimage of any (thick) tensor ideal via a pseudo-tensor functor is also a (thick) tensor ideal, and any tensor ideal $\cI$ in $\cT$ has an associated thick tensor ideal $\Ob(\cI)$ consisting of objects $X\in\cT$ whose identity morphisms $\id_X$ are in $\cI$. Given a thick tensor ideal $I$ in $\cT$ there are unique minimal and maximal tensor ideals $I^{\rm min}$ and $I^{\rm max}$ among tensor ideals $\cI$ with $\Ob(\cI)=I$. Specifically, $I^{\rm min}$ consists of all morphisms in $\cT$ that factor through an object in $I$, and $I^{\rm max}$ is the sum of all tensor ideals $\cI$ with $\Ob(\cI)=I$ (see \cite[\S 2.3.1]{tensorideals2}).

For a tensor ideal $\cI$ in a pseudo-tensor category $\cT$, we define the category $\cT/\cI$ to have the same objects as $\cT$ but with morphism spaces $\Hom_{\cT}(X,Y)/\cI(X,Y)$. Then $\cT/\cI$ is also a pseudo-tensor category. Note that $X$ is isomorphic to the zero object in $\cT/\cI$ if and only if $X\in\Ob(\cI)$. If $I=\Ob(\cI)$ and $J$ is another thick tensor ideal in $\cT$ containing $I$, then the image of $J$ in $\cT/\cI$ is also a thick tensor ideal, and we write $J/I$ for this image. The non-zero indecomposable objects in $J/I$ are indecomposables $X\in J\setminus I$.

\subsection{Tensor ideals in $\Tilt\SL_2$} \label{SecSL2Tensor} If $G=\SL_2$ then we can identify $X(T)$ with $\mZ$, and we write $L_i$, $\Delta_i$, $T_i$ for the simple, Weyl and tilting modules corresponding to $i\in\mN$. Then Steinberg's tensor product theorem becomes
\[L_{a+bp^r}=L_a\otimes L_b^{(r)}\text{ for $0\leq a\leq p^r-1$ and $b\in\mN$}\]
while Donkin's tensor product theorem (which holds for all $p>0$) becomes
\[T_{a+bp^r}=T_a\otimes T_b^{(r)}\text{ for $p^r-1\leq a\leq 2p^r-2$ and $b\in\mN$.}\]
We also have $T_p=T_1\otimes T_{p-1}$, and for $p\leq a\leq 2p-2$ the module $T_a$ is uniserial with Weyl composition factors $[\Delta_{2p-2-a},\Delta_a]$, or simple composition factors $[L_{2p-2-a},L_a,L_{2p-2-a}]$ (see \cite[\S 3]{BEO} and \cite[\S E.1]{Jantzen}). This allows us to inductively deduce the weights and simple composition factors of every indecomposable tilting module of $\SL_2$.

We recall from \cite[Theorem~5.3.1]{tensorideals1} or \cite[Proposition~3.5]{BEO} we have a tensor ideal $\cI_n(\SL_2)$ in $\Tilt\SL_2$ consisting of morphisms that are supported on summands with highest weights greater than or equal to $p^n-1$, and these comprise all tensor ideals in $\Tilt\SL_2$. We also recall from \cite[Proposition~4.4]{GK} that if $F:\Rep G\to\Rep\SL_2$ is restriction along a principal morphism $\phi:\SL_2\to G$, then $F(T(\lambda))\in\Ob(\cI_n(\SL_2))$ if and only if $\lambda\not\in A$. This is stated only for simply-connected simple groups, however the proof applies to arbitrary $G$ by the reasoning in Section~\ref{SecPrincipal}. In particular, since $\sum_{\alpha\in\Phi^+}\alpha^\vee=2\rho^\vee$ where $\rho^\vee$ is the Weyl vector of the dual root system $\Phi^\vee$, the restriction of any simple root $\alpha_i$ of $G$ to a weight of $\SL_2$ along $\phi$ is $\phi^*(\alpha_i)=2$, which matches the assumptions in \cite[\S 3.5]{GK}.

\subsection{Abelian envelopes} An \textbf{abelian envelope} of a pseudo-tensor category $\cT$ over $\bk$ is a tensor category $\cC$ over $\bk$ along with a pseudo-tensor functor $F:\cT\to\cC$ such that, for any tensor category $\cC'$ over $\bk$, composition with $F$ is an equivalence between the category of tensor functors $\cC\to\cC'$ and the category of pseudo-tensor functors $\cT\to\cC'$. Examples of abelian envelopes in the literature include \cite{MAE,EHS,BEO,quoprop}. Note that \cite{BEO} only considers envelopes where $F$ is full, however we will not require this (see Remark~\ref{RemNonFull}).

Given two pseudo-tensor categories $\cT_1$ and $\cT_2$, we define $\cT_1\otimes\cT_2$ and $\cT_1\dot\otimes\cT_2$ following \cite[\S 6]{quoprop}. The category $\cT_1\otimes\cT_2$ has objects $(X,Y)$ for $X\in\cT_1$, $Y\in\cT_2$ and morphisms
\[\Hom_{\cT_1\otimes\cT_2}((X,Y),(X',Y'))=\Hom_{\cT_1}(X,X')\otimes_\bk\Hom_{\cT_2}(Y,Y')\]
while the category $\cT_1\dot\otimes\cT_2$ is the additive Karoubi envelope of $\cT_1\otimes\cT_2$. If $\cC_1$ and $\cC_2$ are tensor categories, then by \cite[Theorem~6.1.3]{quoprop} we can define the Deligne tensor product $\cC_1\boxtimes\cC_2$ as an abelian envelope of $\cC_1\dot\otimes\cC_2$, if it exists. If $C$ and $D$ are $\bk$-coalgebras, then we have $\Comod C\boxtimes\Comod D\simeq\Comod(C\otimes_\bk D)$ by \cite[Proposition~1.11.2]{EGNO}.

\subsection{Existence and construction of abelian envelopes}\label{SecAbEnv} The following definitions and facts are from \cite[\S 2]{BEO}. A morphism $f:X\to Y$ in an additive category is called \textbf{split} if it has an epi-mono factorisation $f=\iota\pi$ with both $\pi$ and $\iota$ split, meaning $\pi$ is a projection onto a direct summand of $X$ and $\iota$ is an inclusion of a direct summand of $Y$. A \textbf{splitting object} in a pseudo-tensor category $\cT$ is an object $P$ such that $f\otimes\id_P$ is split for every morphism $f$ in $\cT$. Splitting objects form a thick tensor ideal, and if $\cT$ is a tensor category with enough projectives then the splitting objects in $\cT$ are exactly the projective objects. We say that $\cT$ is \textbf{separated} if for any non-zero morphism $f$ in $\cT$ we have $f\otimes\id_P\neq0$ for some splitting object $P$.

Let $S$ be a thick tensor ideal in a pseudo-tensor category $\cT$, and let $\{P_i,i\in I\}$ be a complete set of representatives of isomorphism classes of indecomposable objects in $S$. Suppose that for each $i\in I$ there are finitely many $j\in I$ with $\Hom(P_i,P_j)\neq0$. As described in \cite[\S 2]{BEO}, we may construct a coalgebra $C=\bigoplus_{i,j\in I}\Hom(P_i,P_j)^*$ such that the algebra structure on the dual $C^*=\prod_{i,j\in I}\Hom(P_i,P_j)$ is $(f,g)\mapsto g\circ f$. We then have an abelian category $\cC(\cT,S)=\Comod(C)$ and a $\bk$-linear functor $F:\cT\to\cC(\cT,S)$ given by $X\mapsto\bigoplus_{i\in I}\Hom(P_i,X)$ where $C^*$ acts on $X$ by composition on the right. It is shown in \cite[\S 2]{BEO} that $\cC(\cT,S)$ inherits a rigid symmetric monoidal structure from $\cT$.

\begin{theorem}\label{ThmAbEnv}
Take $S$ to be the ideal of splitting objects in $\cT$. Then $\cT$ is separated if and only if the functor $F$ defined above is faithful. If this holds, then $\cC(\cT,S)$ is a tensor category and $F:\cT\to\cC(\cT,S)$ is an abelian envelope of $\cT$.
\end{theorem}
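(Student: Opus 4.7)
The plan is to handle the three independent claims separately: the biconditional between separatedness of $\cT$ and faithfulness of $F$; the tensor-categorical structure on $\cC(\cT,S)$ when $F$ is faithful; and the universal property establishing $F$ as an abelian envelope.

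For the biconditional, I would first unpack $F$-vanishing: by the construction, $F(f)=0$ for $f\colon X\to Y$ precisely when $f\circ g=0$ for every $g\colon P_i\to X$, equivalently when $f$ annihilates every morphism into $X$ from any object of $S$. Assume $\cT$ is separated and $f\neq 0$, producing a splitting object $P$ with $f\otimes\id_P\neq 0$. Splitness yields a non-zero direct summand $P'\hookrightarrow X\otimes P$ mapping isomorphically onto a summand of $Y\otimes P$; both are splitting, since splitting objects form a thick tensor ideal closed under tensoring by any object. Adjoining via rigidity, $\Hom(P',X\otimes P)\cong\Hom(P'\otimes P^*,X)$, and decomposing $P'\otimes P^*$ into indecomposables $P_i$ extracts a morphism $P_i\to X$ not annihilated by $f$. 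Conversely, if $F(f)\neq 0$ then $f\circ g\neq 0$ for some $g\colon P_i\to X$, and tensoring by $\id_{P_i^*}$ witnesses $f\otimes\id_{P_i^*}\neq 0$, once one checks that duals of splitting objects are splitting.

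For the tensor-categorical structure under separation, $\cC(\cT,S)=\Comod(C)$ is locally finite abelian since $C$ is a pointwise finite-dimensional coalgebra. The rigid symmetric monoidal structure transported from $\cT$ is given in the construction summary; faithfulness of $F$ ensures $F(\unit)=\unit$ and $\End_{\cC(\cT,S)}(\unit)=\bk$; and exactness of $\otimes$ on $\Comod(C)$ follows from the rigidity of every object. For the universal property, given a pseudo-tensor functor $G\colon\cT\to\cC'$ into a tensor category, each $G(P_i)$ is a splitting, hence projective, object of $\cC'$. I would define the extension $\bar G\colon\Comod(C)\to\cC'$ by sending each projective generator $F(P_i)$ to $G(P_i)$ with $\End(P_i)$-action compatible with $G$, and extending to arbitrary comodules via their resolution by sums of the $F(P_i)$. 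Pseudo-tensoriality of $G$ gives monoidal compatibility, the identification $F(X)=\bigoplus_i\Hom(P_i,X)$ with its natural $C^*$-action gives a natural isomorphism $\bar G\circ F\cong G$, and uniqueness follows because $\cC(\cT,S)$ is generated under colimits by the image of $F$.

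The hardest step is the forward direction of the first equivalence, specifically extracting a non-vanishing morphism $P_i\to X$ from the splitness of $f\otimes\id_P$: this requires careful bookkeeping with rigidity to transport a summand of $X\otimes P$ back to an indecomposable splitting summand of some $P_i\otimes P^*$ and to verify that the resulting composition with $f$ remains non-zero. A secondary subtlety is that we do not require $F$ to be full (in contrast to the corresponding result in \cite{BEO}), so the universal property must accommodate objects of $\cC(\cT,S)$ with no direct preimage in $\cT$; this is the reason for favouring the comodule/projective-resolution presentation of $\bar G$ over any definition based solely on images of objects of $\cT$.
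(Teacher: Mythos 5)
Your argument for the equivalence between separatedness and faithfulness is correct, and it essentially reproves \cite[Proposition~2.25]{BEO}, which the paper simply cites; the same is true for the tensor structure on $\cC(\cT,S)$, which is cited from \cite[Proposition~2.37]{BEO}. So for those two claims the content matches, just with a citation replaced by a (reasonable) sketch.

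Your argument for the universal property, however, has a real gap. You assert that for a pseudo-tensor functor $G:\cT\to\cC'$, each $G(P_i)$ is a splitting, hence projective, object of $\cC'$. This does not hold. Being a splitting object of $\cC'$ requires $g\otimes\id_{G(P_i)}$ to be split for \emph{every} morphism $g$ in $\cC'$, but a pseudo-tensor functor only controls what happens when $g$ lies in the image of $G$. Concretely, tensor functors between tensor categories do not in general preserve projectives: inflation $\Rep(\mZ/p)\to\Rep(\mZ/p\times\mZ/p)$ over $\bk$ of characteristic $p$ sends the regular representation (which is projective and splitting) to a non-projective object. If $\cT$ is semisimple it is its own abelian envelope and every indecomposable is one of the $P_i$, so this kind of tensor functor is exactly the sort of $G$ your universal property must accommodate, and $G(P_i)$ need not be projective in the target. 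Without that, the proposed construction of $\bar G$ via presentations by sums of $F(P_i)$ is neither obviously well-defined nor obviously exact.

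The paper avoids this entirely by invoking \cite[Corollary~4.4.4]{envelopes3}, an intrinsic criterion on the pair $(F,\cC(\cT,S))$ that does not require constructing $\bar G$ by hand. The two conditions verified are generation (every comodule is a quotient of sums of $F(P_i)$, which is immediate) and a lifting condition, for which the key observation is that $\Hom_\cT(P_j,X)\to\Hom_{\cC(\cT,S)}(F(P_j),F(X))$ is surjective: any $C^*$-module map $f:F(P_j)\to F(X)$ is determined by $f(\id_{P_j})\in\Hom(P_j,X)$. This surjectivity is also what would let one repair your direct construction, since it ensures that morphisms between sums of $F(P_i)$ all lift to $\cT$ and hence can be pushed through $G$ without any projectivity assumption on the $G(P_i)$; it is precisely the ingredient your proposal is missing.
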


\begin{proof}
Separatedness is characterised by \cite[Proposition~2.25]{BEO}, and then it is shown in \cite[Proposition~2.37]{BEO} that $\cC(\cT,S)$ is a tensor category and hence $F$ is a pseudo-tensor functor. The case where $F$ is full is proven in \cite[Theorem~2.42]{BEO}, and for completeness we will reprove this for $F$ not necessarily full. We will show $\cC(\cT,S)$ satisfies \cite[Corollary~4.4.4]{envelopes3}. Condition (G) in this corollary is satisfied, since all objects in $\Comod(C)$ are quotients of sums of $F(P_i)$ for $i\in I$. For condition (F), notice that the map $\Hom(P_j,X)\to\Hom(F(P_j),F(X))$ is surjective for any $j\in I$ and $X\in\cT$, since if
\[f:F(P_j)=\bigoplus_{i\in I}\Hom(P_i,P_j)\to F(X)=\bigoplus_{i\in I}\Hom(P_i,X),\]
is a $C^*$-module homomorphism then $f=f(\id_{P_j})\circ-$. For a morphism $a:F(X)\to F(Y)$ in $\cC(\cT)$, choose an epimorphism $F(P)\twoheadrightarrow F(X)$ where $P$ is a direct sum of objects in $\{P_i\mid i\in I\}$. The surjectivity above means this has a preimage $q:P\to X$, and $aF(q)$ also has a preimage as required.
\end{proof}

The following theorem is \cite[Theorem~2.4.1(2)]{tensorideals2}, which guarantees existence of an abelian envelope for certain quotient categories.

\begin{theorem}\label{ThmCEOEnv}
Suppose $\cT$ is a pseudo-tensor category with an isomorphism $X\otimes Y\xrightarrow{\sim}Y\otimes X$ for $X,Y\in\cT$ that is natural in $X$ and $Y$, and $I$ is a thick tensor ideal such that
\begin{enumerate}
\item there is a unique thick tensor ideal $J$ strictly containing $I$ such that every thick tensor ideal of $\cT$ strictly containing $I$ also contains $J$, and
\item the number of isomorphism classes of indecomposable objects in $J/I$ is finite.
\end{enumerate}
Then $\cT/I^{\rm max}$ is separated with splitting objects $J/I$, and the tensor category $\cC(\cT/I^{\rm max},J/I)$ is an abelian envelope of $\cT/I^{\rm max}$.
\end{theorem}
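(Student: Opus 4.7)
My plan is to reduce the theorem to Theorem~\ref{ThmAbEnv} by verifying two properties of $\cT/I^{\rm max}$: that it is separated, and that its thick tensor ideal of splitting objects coincides with $J/I$. Given these, Theorem~\ref{ThmAbEnv} immediately yields $\cC(\cT/I^{\rm max},J/I)$ as an abelian envelope of $\cT/I^{\rm max}$.

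For separatedness I would take a non-zero morphism $\bar f:X\to Y$ in $\cT/I^{\rm max}$, lifting to some $f\in\Hom_\cT(X,Y)\setminus I^{\rm max}(X,Y)$. Since $I^{\rm max}$ is by definition the sum of all tensor ideals with object ideal $I$, the tensor ideal $\langle f\rangle + I^{\rm min}$ in $\cT$ has object ideal strictly containing $I$, and hypothesis (1) then forces $J$ to be contained in it. Consequently $\id_P$ for some indecomposable $P\in J\setminus I$ is constructible from $f$ and morphisms in $I^{\rm min}$, so $f\otimes\id_P\neq 0$ in $\cT/I^{\rm max}$. This delivers separatedness once such a $P$ is confirmed to be splitting.

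The central and most delicate step is to identify the splitting ideal of $\cT/I^{\rm max}$ as $J/I$. For the inclusion $J/I\subseteq S$, fix $P\in J$ indecomposable and any $f:X\to Y$ in $\cT$. Because $J$ is a thick tensor ideal, both $P\otimes X$ and $P\otimes Y$ lie in $J$, so modulo $I^{\rm max}$ their non-zero Krull-Schmidt summands are drawn from the finite list in $J/I$ provided by hypothesis (2). Writing $f\otimes\id_P$ as a finite matrix of morphisms between these indecomposables, the key claim is that modulo $I^{\rm max}$ each entry is either zero or an isomorphism: any non-invertible non-zero morphism between indecomposables in $J/I$ would generate a tensor ideal whose object ideal sits strictly between $I$ and $J$, contradicting the minimality in hypothesis (1). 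Assembling the invertible blocks then provides the required split epi--mono factorisation of $f\otimes\id_P$. For the reverse inclusion, if an indecomposable $P\notin J$ were splitting in $\cT/I^{\rm max}$, then the splitting ideal $S$ would pull back to a thick tensor ideal in $\cT$ properly containing $J$, and one detects a morphism $f$ whose image of $f\otimes\id_P$ fails to split modulo $I^{\rm max}$ by reapplying the maximality of $I^{\rm max}$ to those larger ideals.

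The main obstacle is the internal claim that non-invertible non-zero morphisms between indecomposables in $J/I$ vanish modulo $I^{\rm max}$. The minimality of $J$ is a statement about thick tensor ideals rather than individual morphisms, so bridging this gap requires a careful combination of the explicit description of $I^{\rm max}$ as the sum of all tensor ideals with object ideal $I$ and a Krull--Schmidt analysis of endomorphism rings in $\cT/I^{\rm max}$; the natural commutativity constraint in the hypothesis is what allows this analysis to interact well with the tensor structure. Once this technical core is established, both the splitting identification and separatedness follow, and Theorem~\ref{ThmAbEnv} completes the proof.
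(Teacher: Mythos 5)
The paper does not actually prove this statement: it is quoted as \cite[Theorem~2.4.1(2)]{tensorideals2}, so there is no in-paper proof to compare your proposal to. Evaluating your argument on its own terms, however, reveals a genuine gap at exactly the point you flagged as ``the central and most delicate step.''

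The step in question is the identification of the splitting ideal with $J/I$, and specifically the ``key claim'' that any non-zero non-invertible morphism between indecomposable objects of $J/I$ must vanish modulo $I^{\rm max}$. This claim is false. Take $\cT=\Tilt\SL_2$, $I=I_2(\SL_2)$, $J=J_2(\SL_2)=I_1(\SL_2)$, with $p\geq 3$. The indecomposables in $J\setminus I$ are $T_{p-1},\dots,T_{p^2-2}$. The tilting module $T_p$ is uniserial with composition factors $[L_{p-2},L_p,L_{p-2}]$, so $\End(T_p)\cong\bk[\epsilon]/\epsilon^2$ contains a non-zero nilpotent endomorphism factoring through $L_{p-2}$. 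Since $I_2^{\rm max}=I_2^{\rm min}=\cI_2(\SL_2)$ consists of morphisms supported on summands of weight $\geq p^2-1$, and this nilpotent factors through the simple $L_{p-2}$, it survives modulo $I^{\rm max}$. Indeed this is forced: the theorem produces $\Ver_{p^2}$, a \emph{non-semisimple} finite tensor category, and the images of the objects in $J/I$ are its indecomposable projectives, which generically have non-trivial radical. Your claim would instead force $\cC(\cT/I^{\rm max},J/I)$ to be semisimple, which is only the case for $n=1$. (Your appeal to minimality of $J$ does not prevent this, because a non-invertible morphism $g$ between indecomposables of $J/I$ has $\id\in\Ob(\langle g\rangle+I^{\rm min})$ for those very indecomposables --- one recovers identities via compositions of the form $\ev\circ(g\otimes 1)\circ\coev$ --- so the generated thick ideal is \emph{not} strictly between $I$ and $J$; it typically contains $J$.)

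Your Krull--Schmidt/matrix strategy for showing $f\otimes\id_P$ is split therefore cannot work as stated, since split morphisms need not have matrix entries that are each zero or invertible before performing elimination, and the required cancellations cannot be controlled by the key claim alone. By contrast, the separatedness part of your argument is sound once the splitting claim is in hand: the observation that $\Ob(\langle f\rangle+I^{\rm max})\supsetneq I$ forces $\Ob(\langle f\rangle+I^{\rm max})\supseteq J$, and then tensoring the resulting expression for some $\id_P$ ($P\in J\setminus I$) by $\id_P$ and using rigidity and commutativity to conclude $P\in I$ if $f\otimes\id_Q\in I^{\rm max}$ for all $Q\in J$, is a correct line of reasoning. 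You should replace the key claim by an argument that actually establishes splitness --- for instance by first working in the comodule category $\cC(\cT/I^{\rm max},J/I)$, where the images $F(P_i)$ are projective-injective by construction, and then pulling the splitting back along the (faithful) functor $F$; this is in the spirit of \cite[\S 2]{BEO}, but requires verifying faithfulness first, which interacts with the separatedness step in a way your proposal does not disentangle.
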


Note that the functor $\cT\to\cC(\cT/I^{\rm max},J/I)$ is not necessarily faithful objects whose indecomposable summands are all in $J\setminus I$, see for example Remark~\ref{RemNonFaithful}.

\section{Definition of $\Ver_{p^n}(G)$}\label{SecDefinition}

\subsection{}\label{SecTIJDef} Fix an integer $n\geq1$. We make the following definitions:
\begin{enumerate}
\item Let $\cT_n(G)$ be the full subcategory of $\Tilt G$ consisting of objects whose indecomposable summands have highest weights in $\{0\}\cup((p^{n-1}-1)\rho+\Lambda^+)$.
\item Let $I_n(G)$ be the class of objects in $\Tilt G$ whose indecomposable summands have highest weights in $(p^{n-1}-1)\rho+\Lambda_{n-1}+p^{n-1}(\Lambda^+\setminus A)$.
\item Let $J_n(G)$ be the class of objects in $\Tilt G$ whose indecomposable summands have highest weights in $(p^{n-1}-1)\rho+\Lambda^+$.
\end{enumerate}
If $G$ is clear from context, we will just write $\cT_n$, $I_n$ and $J_n$. Note that $\cT_n(G),I_n(G),J_n(G)$ are the restrictions/preimages of the corresponding categories/classes for $\widetilde G$ or $\widetilde{\cD G}$. The example $G=\SL_3$ is shown in Figure~\ref{FigSL3}. Also note that any indecomposable module in $J_n$ is isomorphic to $T(\lambda+p^{n-1}\mu)$ for some unique weights $\lambda\in(p^{n-1}-1)\rho+\Lambda_{n-1}$ and $\mu\in\Lambda^+$, and we have $T(\lambda+p^{n-1}\mu)\in I_n$ if and only if $\mu\not\in A$. By Donkin's tensor product theorem, this tilting module is isomorphic to $T(\lambda)\otimes T(\mu)^{(n-1)}$ as a $\widetilde G$-module. Under the assumptions in Section~\ref{SecAlgGrps}, $((p^{n-1}-1)\rho+\Lambda_{n-1})\cap X(T)$ contains the weight $2(p^{n-1}-1)\rho$, so there is always has at least one indecomposable object in $J_n\setminus I_n$.

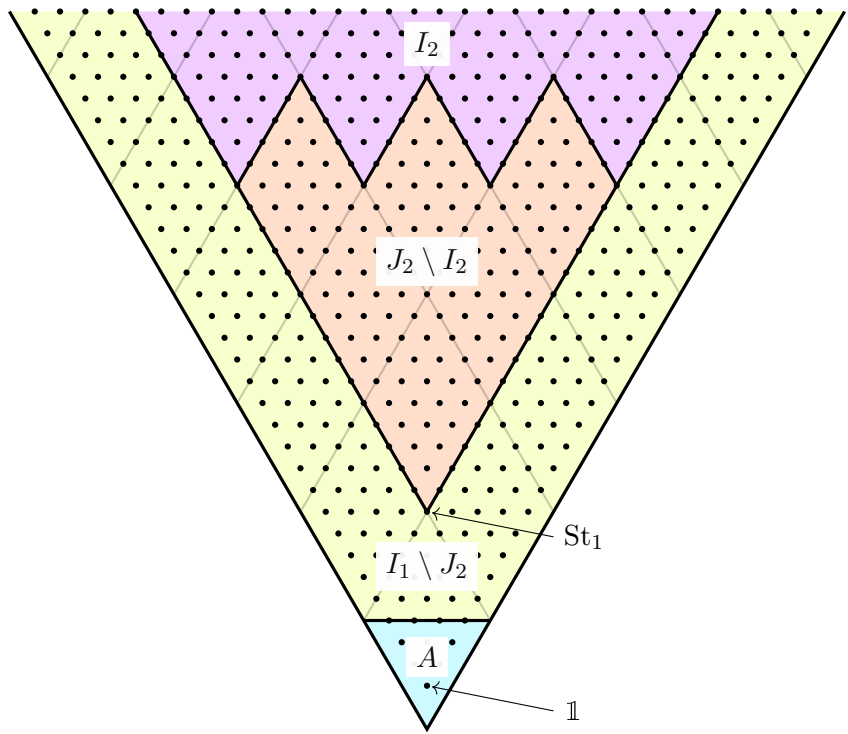
\begin{figure}[h]
\begin{tikzpicture}[scale=0.333]
\definecolor{color1}{HTML}{CCFAFF}
\definecolor{color2}{HTML}{F7FFCC}
\definecolor{color3}{HTML}{FFDFCC}
\definecolor{color4}{HTML}{F1CCFF}
\fill[color1] (0,-2*0.866) -- (-2.5,3*0.866) -- (2.5,3*0.866);
\fill[color2] (0,8*0.866) -- (-11.5,31*0.866) -- (-16.5,31*0.866) -- (-2.5,3*0.866) -- (2.5,3*0.866) -- (16.5,31*0.866) -- (11.5,31*0.866) -- (0,8*0.866);
\fill[color3] (0,8*0.866) -- (-7.5,23*0.866) -- (-5,28*0.866) -- (-2.5,23*0.866) -- (0,28*0.866) -- (2.5,23*0.866) -- (5,28*0.866) -- (7.5,23*0.866);
\fill[color4] (-11.5,31*0.866) -- (-7.5,23*0.866) -- (-5,28*0.866) -- (-2.5,23*0.866) -- (0,28*0.866) -- (2.5,23*0.866) -- (5,28*0.866) -- (7.5,23*0.866) -- (11.5,31*0.866);
\foreach \x in {1,...,6} { \draw[thick, opacity=0.2] (-2.5*\x, 4.330*\x-1.732) -- (16.5-5*\x, 31*0.866); }
\foreach \x in {1,...,6} { \draw[thick, opacity=0.2] (2.5*\x, 4.330*\x-1.732) -- (-16.5+5*\x, 31*0.866); }
\foreach \y in {0,...,31} { \foreach \x in {0,...,\y} { \fill (\x-\y/2, 0.866*\y) circle (3.5pt); }}
\draw[very thick] (-16.5,31*0.866) -- (0,-2*0.866) -- (16.5,31*0.866);
\draw[very thick] (-2.5,3*0.866) -- (2.5,3*0.866);
\draw[very thick] (-11.5,31*0.866) -- (0,8*0.866) -- (11.5,31*0.866);
\draw[very thick] (-7.5,23*0.866) -- (-5,28*0.866) -- (-2.5,23*0.866) -- (0,28*0.866) -- (2.5,23*0.866) -- (5,28*0.866) -- (7.5,23*0.866);
\node[fill=white, opacity=0.95] at (0,1.333*0.866) {$A$};
\node[fill=white, opacity=0.95] at (0,5.5*0.866) {$I_1\setminus J_2$};
\node[fill=white, opacity=0.95] at (0,19.5*0.866) {$J_2\setminus I_2$};
\node[fill=white, opacity=0.95] at (0,29.5*0.866) {$I_2$};
\draw[<-] (0.2,-0.04) -- (5,-1) node[right] {$\unit$};
\draw[<-] (0.2,8*0.866-0.04) -- (5,8*0.866-1) node[right] {$\St_1$};
\end{tikzpicture}
\caption{Dominant weights $X(T)^+$ for $\SL_3$ with $p=5$, their corresponding indecomposable tilting modules, and the thick tensor ideals those tilting modules belong to. Each labelled region includes all dominant weights on its lower boundary. The faint lines show translates of $X_1(T)$ shifted by $-\rho$.}\label{FigSL3}
\end{figure}

\begin{lemma}\label{LemThickIdeal}
$\cT_n$ is a monoidal subcategory of $\Tilt G$, and both $J_n$ and $I_n$ are thick tensor ideals in both $\Tilt G$ and $\cT_n$. Moreover, if $X\in J_n\setminus I_n$ then any $Y\in J_n$ is a summand of $X\otimes Z$ for some $Z\in J_n$. This means $I_n$ contains every thick ideal in $\Tilt G$ strictly contained in $J_n$, and $J_n$ is the unique proper thick tensor ideal in $\cT_n$ strictly containing $I_n$.
\end{lemma}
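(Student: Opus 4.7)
The strategy rests on two structural observations from Section~\ref{SecTIJDef}: by Donkin's tensor product theorem (valid under $p\geq 2h-4$), every indecomposable tilting in $J_n$ can be written uniquely as $T(\lambda)\otimes T(\mu)^{(n-1)}$ with $\lambda\in(p^{n-1}-1)\rho+\Lambda_{n-1}$ and $\mu\in\Lambda^+$, and such an object lies in $I_n$ precisely when $\mu\notin A$.

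The main step is to show $J_n$ is a thick tensor ideal in $\Tilt G$. The plan is to identify $J_n$ with the full subcategory of tilting modules whose restriction to the $(n-1)$-st Frobenius kernel $G_{n-1}$ is injective. Donkin's tilting conjecture, equivalent to Donkin's TPT by \cite[Lemma~E.9]{Jantzen} and hence valid under our assumption, identifies each $T(\lambda)$ for $\lambda\in(p^{n-1}-1)\rho+\Lambda_{n-1}$ with the $G_{n-1}$-injective hull of a simple module; since $T(\mu)^{(n-1)}$ restricts trivially to $G_{n-1}$, the Donkin decomposition shows every indecomposable in $J_n$ is a $G_{n-1}$-injective $G$-module. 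The converse inclusion---that every indecomposable $G_{n-1}$-injective tilting lies in $J_n$---follows by comparing characters with those of the $G_{n-1}$-injectives classified by Donkin's conjecture, which rules out $\nu$ outside $(p^{n-1}-1)\rho+\Lambda^+$. Because $G_{n-1}$-injectivity is preserved under tensoring with arbitrary finite-dimensional $G$-modules (via the standard $\Ext$ adjunction) and under direct summands, $J_n$ is a thick tensor ideal. Monoidality of $\cT_n$ follows immediately, since its objects are tilting modules whose summands are either $\unit$ or in $J_n$.

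For $I_n$ being a tensor ideal, the Donkin decomposition reduces the question to the analogous fact for $I_1$ applied to the Frobenius-twisted factor. Given $T(\nu)=T(\lambda)\otimes T(\mu)^{(n-1)}\in I_n$ (so $T(\mu)\in I_1$) and any tilting $W$, I would expand $T(\nu)\otimes W=[T(\lambda)\otimes W]\otimes T(\mu)^{(n-1)}$; the factor $T(\lambda)\otimes W$ lies in $J_n$ by the above and decomposes via Donkin, yielding summands of the form $T(\lambda_i)\otimes(T(\mu_i)\otimes T(\mu))^{(n-1)}$. Each summand of $T(\mu_i)\otimes T(\mu)$ lies in $I_1$ because $I_1$ is a tensor ideal by \cite[Proposition~4.4]{GK} and $T(\mu)\in I_1$, so each summand of $T(\nu)\otimes W$ lies in $I_n$. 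For the generation statement, I would take $Z=X^*\otimes Y$, which lies in $J_n$ because $X^*\in J_n$ (using that $-w_0\rho=\rho$ preserves $(p^{n-1}-1)\rho+\Lambda_{n-1}$) and $J_n$ is a tensor ideal; then $X\otimes Z=(X\otimes X^*)\otimes Y$ contains $Y$ as a summand, since $\unit$ is a split summand of $X\otimes X^*$ via $\ev_X$ and $\coev_X$.

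Both maximality claims then follow from the generation property. Any thick ideal $K$ of $\Tilt G$ with $I_n\subsetneq K\subseteq J_n$ would contain some indecomposable $X_0\in J_n\setminus I_n$; by generation, every $Y\in J_n$ is a summand of $X_0\otimes Z$ for some $Z\in J_n$, forcing $K=J_n$ and contradicting $K\subsetneq J_n$. Similarly, any proper thick tensor ideal of $\cT_n$ strictly containing $I_n$ must avoid $\unit$ (otherwise it would equal $\cT_n$), hence is contained in $J_n$; the generation argument again forces equality with $J_n$. The main obstacle is establishing the identification $J_n=\{G_{n-1}\text{-injective tilting modules}\}$, which relies on Donkin's tilting conjecture; once this is in hand, the rest reduces to bookkeeping with the Donkin decomposition and the known structure of $I_1$.
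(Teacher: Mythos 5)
The first half of your proposal is sound, and though it takes a slightly different path from the paper's (you re-derive the characterisation of $J_n$ from Donkin's tilting module conjecture, whereas the paper directly cites \cite[Lemma~E.8]{Jantzen}, and you reduce $I_n$ to the $n=1$ case by a Donkin decomposition, whereas the paper iterates Donkin's theorem once more), these are essentially the same ideas.

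The generation argument, however, has a genuine gap. You claim that $\unit$ is a split summand of $X\otimes X^*$ via $\ev_X$ and $\coev_X$. The composition $\unit\xrightarrow{\coev_X}X\otimes X^*\xrightarrow{\ev_X\circ c}\unit$ equals $(\dim X)\cdot\id_\unit$, so $\unit$ is a summand of $X\otimes X^*$ precisely when the categorical dimension of $X$ is nonzero in $\bk$. But for $n\geq 2$ \emph{every} object of $J_n$ has vector space dimension divisible by $p$ (for instance, $\St_{n-1}\in J_n\setminus I_n$ has dimension $p^{(n-1)|\Phi^+|}$; more generally, any $G_{n-1}T$-projective tilting module is free over the unipotent part of $G_{n-1}$, of order $p^{(n-1)|\Phi^+|}$), hence has categorical dimension zero. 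So $\unit$ is \emph{not} a summand of $X\otimes X^*$, and $Y$ need not be a summand of $X\otimes X^*\otimes Y$. (The argument happens to work for $n=1$, where $J_1\setminus I_1$ consists of the non-negligible $T(\mu)$ with $\mu\in A$, but you offer no case split.)

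The paper sidesteps dimension entirely. It first invokes the cell structure from \cite[Proposition~14]{AnCells}: the indecomposables in $J_n\setminus I_n$ form a cell, so they mutually generate one another (and generate everything in $I_n$) as thick tensor ideals, producing \emph{some} tilting module $Z'$, not a priori in $J_n$, with $Y$ a summand of $X\otimes Z'$. Only then does it set $Z=X^*\otimes X\otimes Z'$, which lies in $J_n$ since $Z'\otimes X\in J_n$, and exhibits $X\otimes Z'$ as a summand of $X\otimes Z$ via the snake identity $(\id_X\otimes\ev_X\otimes\id_{Z'})\circ(\coev_X\otimes\id_{X\otimes Z'})=\id_{X\otimes Z'}$. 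The crucial point is that the snake identity is a triangle axiom of rigidity and holds unconditionally, whereas $\ev_X\circ c\circ\coev_X=\dim X$ does not. To repair your argument you would need to import the cell-theoretic input from \cite{AnCells}; the direct duality trick does not suffice.
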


\begin{proof}
$I_n,J_n$ and $\Ob(\cT_n)$ are closed under taking duals. \cite[Lemma~E.8]{Jantzen} states that $T(\lambda)$ is projective over $G_{n-1}T$ if and only if $\lambda\in J_n$, and the tensor product of a projective module with any module is projective by \cite[Proposition~4.2.12]{EGNO}. Thus $J_n$ is a thick tensor ideal, and consequently $\cT_n$ is monoidal. Now suppose $\lambda\in(p^{n-1}-1)\rho+\Lambda_{n-1}$ and $\mu\in\Lambda^+\setminus A$ so that $T(\lambda+p^{n-1}\mu)\cong T(\lambda)\otimes T(\mu)^{(n-1)}\in I_n$ (defined over the cover $\widetilde G$ if necessary). Since $T(\lambda)\otimes T(\lambda')\in J_n$ for any $\lambda'\in X(T)^+$, we can apply Donkin's theorem to each summand tensored with $T(\mu)^{(n-1)}$ to show $T(\lambda)\otimes T(\lambda')\otimes T(\mu)^{(n-1)}\in I_n$, so $I_n$ is a thick tensor ideal. For the second statement in the lemma, we may assume without loss of generality that $X$ and $Y$ are indecomposable. We recall from \cite[Proposition~14]{AnCells} that the indecomposables in $J_n\setminus I_n$ form a cell, meaning they generate each other in the thick tensor ideal sense. They must also generate any $T(\lambda)\in I_n$ since $T(\lambda)$ is a summand of $T(\mu)\otimes T(\lambda-\mu)$ for some $\mu\in(p^{n-1}-1)\rho+\Lambda_{n-1}$. This means there is a tilting module $Z'$ such that $Y$ is a summand of $X\otimes Z'$, and we then take $Z=X^*\otimes X\otimes Z'\in J_n$ so that the morphisms $\coev_X\otimes\id_{X\otimes Z'}$ and $\id_X\otimes\ev_X\otimes\id_{Z'}$ express $X\otimes Z'$ as a summand of $X\otimes Z$ as required. The last statement in the lemma follows from the second, and the fact that $J_n$ contains all indecomposables in $\cT_n$ except for $\unit$.
\end{proof}

Note that while $J_n$ is minimal among thick ideals in $\cT_n$ strictly containing $I_n$, this may not be true for thick ideals in $\Tilt G$ (see Conjecture~\ref{ConWellBehaved}).

\begin{example}\label{ExSL2SL3} In the following examples, the ideals $I_n,J_n$ comprise all thick ideals of $\Tilt G$:
\begin{enumerate}
\item If $G$ is a torus, then $\cT_n=\Rep G$, $I_n$ is the zero ideal, and $J_n=\Ob(\cT_n)$.
\item We have $I_n(\SL_2)=J_{n+1}(\SL_2)$ for $n\geq1$, and by \cite[Theorem~5.3.1]{tensorideals1} the complete set of thick ideals in $\Tilt\SL_2$ is $\Ob(\Tilt\SL_2)\supset I_1\supset I_2\supset\cdots$.
\item By \cite{RaCells} and \cite{AnCells} the complete set of thick ideals in $\Tilt\SL_3$ is $\Ob(\Tilt\SL_3)=J_1\supset I_1\supset J_2\supset I_2\supset J_3\supset I_3\supset\cdots$. The first few of these are illustrated in Figure~\ref{FigSL3}.
\end{enumerate}
\end{example}

\begin{lemma}\label{LemTiltCover}
For $n\in\mN_{>0}$ and $\lambda\in X_n(T)$, the tilting module $T(2(p^n-1)\rho+w_0\lambda)$ has $L(\lambda)$ as both its socle and top, where $w_0$ is the longest element of the Weyl group $W$. That is, for $\lambda\in X(T)^+$ and $\mu\in((p^n-1)\rho+\Lambda_n)\cap X(T)$ we have
\[\dim\Hom(L(\lambda),T(\mu))=\dim\Hom(T(\mu),L(\lambda))=\begin{cases}1&\text{if }\mu=2(p^n-1)\rho+w_0\lambda\\0&\text{if }\mu\neq2(p^n-1)\rho+w_0\lambda\end{cases}.\]
\end{lemma}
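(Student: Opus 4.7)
The plan is to invoke Donkin's Tilting Module Conjecture, which under our assumptions follows from the tensor product theorem via \cite[Lemma~E.9]{Jantzen}: for $\lambda\in X_n(T)$, the restriction of $T(2(p^n-1)\rho+w_0\lambda)$ to $G_nT$ is the $G_nT$-injective hull (and projective cover) of the simple $G_nT$-module $L(\lambda)$, and therefore has both $G_nT$-socle and $G_nT$-top equal to $L(\lambda)$. Before applying this, I would verify that $\lambda\mapsto 2(p^n-1)\rho+w_0\lambda$ is a bijection $X_n(T)\xrightarrow{\sim}((p^n-1)\rho+\Lambda_n)\cap X(T)$, with inverse $\mu\mapsto w_0\mu+2(p^n-1)\rho$; this reduces to a short calculation using $w_0\rho=-\rho$ and $\langle w_0\nu,\alpha_i^\vee\rangle=-\langle\nu,\alpha_{\sigma(i)}^\vee\rangle$ for the opposition involution $\sigma$, together with the fact that $w_0\nu-\nu$ lies in the root lattice (which ensures the $X(T)$ condition).

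Write $\lambda_\mu$ for the preimage of $\mu$ under this bijection. The core step is to transfer the $G_nT$-socle statement to the $G$-socle. Every $G$-submodule is in particular a $G_nT$-submodule, so the $G$-socle of $T(\mu)$ is contained in the $G_nT$-socle $L(\lambda_\mu)$; being a $G_nT$-submodule of the $G_nT$-simple $L(\lambda_\mu)$, it is either zero or all of $L(\lambda_\mu)$, and non-vanishing of the socle of the non-zero finite-dimensional $G$-module $T(\mu)$ forces it to equal $L(\lambda_\mu)$. Since $\lambda_\mu\in X_n(T)\subseteq X(T)^+$, the module $L(\lambda_\mu)$ is simple as a $G$-module, so $\Hom_G(L(\lambda),T(\mu))=\Hom_G(L(\lambda),L(\lambda_\mu))=\bk\cdot\delta_{\lambda,\lambda_\mu}$. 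The formula for $\Hom_G(T(\mu),L(\lambda))$ follows by applying the same argument to $T(\mu)^*\cong T(-w_0\mu)$, using $L(\lambda)^*=L(-w_0\lambda)$ and the stability of $((p^n-1)\rho+\Lambda_n)\cap X(T)$ under $\nu\mapsto -w_0\nu$.

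The main conceptual obstacle is the transfer from the $G_nT$-socle to the $G$-socle: a priori the $G$-socle could be a proper $G$-stable subset of $L(\lambda_\mu)$, or could contain simples $L(\lambda)$ with $\lambda\notin X_n(T)$. The first possibility is excluded by $G_nT$-simplicity combined with the non-vanishing of the socle of a non-zero finite-dimensional module, and the second is automatic once the $G$-socle is identified as $L(\lambda_\mu)$. If one prefers to check the second directly, Steinberg's theorem gives a decomposition $L(\lambda)\cong L(\lambda_0)\otimes L(\lambda')^{(n)}$ with $\lambda'\neq 0$ whenever $\lambda\notin X_n(T)$; the right-hand side restricts to a non-simple semisimple $G_nT$-module, which therefore cannot embed into the $G_nT$-simple $L(\lambda_\mu)$.
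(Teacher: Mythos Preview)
Your approach is essentially the same as the paper's: both rest on identifying $T(2(p^n-1)\rho+w_0\lambda)|_{G_nT}$ as the $G_nT$-projective cover (equivalently injective hull) of the simple $G_nT$-module $L(\lambda)$ via \cite[II.3.15, II.9.6, E.9]{Jantzen}, then transferring the simple socle/top statement back to $G$ and handling the other half by the duality $T(\mu)^*\cong T(-w_0\mu)$. The paper argues via the top and you via the socle, but the content is identical, and your explicit verification of the bijection $\lambda\mapsto 2(p^n-1)\rho+w_0\lambda$ is a welcome addition.

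One point of rigor: the implication ``every $G$-submodule is a $G_nT$-submodule, so $\Soc_G T(\mu)\subseteq\Soc_{G_nT}T(\mu)$'' is not valid on its own. What that observation gives is the opposite containment, namely that every nonzero $G$-submodule \emph{contains} the simple $G_nT$-socle $L(\lambda_\mu)$; to get $\Soc_G T(\mu)\subseteq L(\lambda_\mu)$ you need that each $G$-simple restricts to a $G_nT$-semisimple module (so that the $G$-socle, being $G$-semisimple, is $G_nT$-semisimple and hence sits inside the $G_nT$-socle). This is precisely the Steinberg consequence you record in your final paragraph, but it belongs in the main argument rather than as an ``alternative'' check. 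The paper's terse phrase ``restriction to $G_nT$ cannot decrease the length of the top'' leans on the same fact just as implicitly.
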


\begin{proof}
Write $G_n$ for the $n$-th Frobenius kernel of $G$. As described in \cite[Proposition II.3.15, Proposition II.9.6, \S E.9]{Jantzen}, the restriction of $L(\lambda)$ to $G_nT$ is simple and its projective cover is the restriction of $T(2(p^n-1)\rho+w_0\lambda)$. A projective cover necessarily has a simple top, and restriction to $G_nT$ cannot decrease the length of the top, so $T(2(p^n-1)\rho+w_0\lambda)$ must have $L(\lambda)$ as its top over $G$ also. The duality $T(\mu)^*=T(-w_0\mu)$ gives the result for the socle.
\end{proof}

\begin{prop}\label{PropTiltCover}
The only indecomposable tilting module in $J_n\setminus I_n\subset\Ob(\cT_n)$ with $\unit$ in its socle is $T(2(p^{n-1}-1)\rho)$. Moreover, for any non-zero morphism $f:\unit\to T(2(p^{n-1}-1)\rho)$, the morphism $f\otimes\id_{\St_{n-1}}$ is split (as a morphism in $\Rep\widetilde G$ if $\St_n\not\in\Rep G$).
\end{prop}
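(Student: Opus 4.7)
The plan is to treat the two assertions separately, using Donkin's tensor product theorem plus Lemma~\ref{LemTiltCover} for the first, and the explicit tilting decomposition of $\St_{n-1}\otimes\St_{n-1}$ together with an adjunction argument for the second.

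For the socle identification, any indecomposable in $J_n\setminus I_n$ has highest weight $\lambda+p^{n-1}\mu$ with $\lambda\in(p^{n-1}-1)\rho+\Lambda_{n-1}$ and $\mu\in A$, and Donkin's theorem plus $T(\mu)=L(\mu)$ inside the fundamental alcove gives $T(\lambda+p^{n-1}\mu)\cong T(\lambda)\otimes L(\mu)^{(n-1)}$ (over $\widetilde G$ if necessary). Rigidity and $L(\mu)^*=L(-w_0\mu)$ identify
\[
\Hom\bigl(\unit,T(\lambda)\otimes L(\mu)^{(n-1)}\bigr)\cong\Hom\bigl(L(-p^{n-1}w_0\mu),T(\lambda)\bigr),
\]
and applying Lemma~\ref{LemTiltCover} with $n$ replaced by $n-1$ imposes both that $-p^{n-1}w_0\mu$ is $p^{n-1}$-restricted and that $\lambda=2(p^{n-1}-1)\rho+w_0(-p^{n-1}w_0\mu)=2(p^{n-1}-1)\rho-p^{n-1}\mu$. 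Integrality forces $\langle-w_0\mu,\alpha_i^\vee\rangle<1$ for every simple root, hence $\mu=0$, and then $\lambda=2(p^{n-1}-1)\rho$. So the unique such tilting is $T(2(p^{n-1}-1)\rho)$.

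For the splitting, write $T=T(2(p^{n-1}-1)\rho)$ and $\St=\St_{n-1}$. Mathieu's theorem on tensor products of $\nabla$-filtered modules gives that $\St\otimes\St$ is tilting, and since its highest weight $2(p^{n-1}-1)\rho$ occurs with multiplicity one, $T$ is a summand of multiplicity one, so $\St\otimes\St=T\oplus M$ for some tilting $M$ with smaller highest weights. The self-duality $\St^*=\St$ and simplicity of $\St$ give $\dim\Hom(\unit,\St\otimes\St)=\dim\End(\St)=1$, while Lemma~\ref{LemTiltCover} gives $\dim\Hom(\unit,T)=1$, forcing $\Hom(\unit,M)=0$. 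Hence $\coev_\St:\unit\to\St\otimes\St$ factors through the $T$-summand: if $\iota_T:T\hookrightarrow\St\otimes\St$ denotes the inclusion, then $\iota_T\circ f$ is a non-zero element of the one-dimensional space $\Hom(\unit,\St\otimes\St)$, so equals $c\cdot\coev_\St$ for some non-zero scalar $c$.

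To conclude, transport through the tensor-hom adjunction $\Hom(T,\St\otimes\St^*)\cong\Hom(T\otimes\St,\St)$: the inclusion $\iota_T$ corresponds to a morphism $g':T\otimes\St\to\St$, and the identity $\iota_T\circ f=c\cdot\coev_\St$ becomes $g'\circ(f\otimes\id_\St)=c\cdot\id_\St$. Setting $g=c^{-1}g'$ shows that $f\otimes\id_\St$ is a split monomorphism, hence split in the sense of Section~\ref{SecAbEnv} via the epi-mono factorisation $(\id_\St)\circ(f\otimes\id_\St)$. The only step requiring care is the adjunction bookkeeping and verifying that the whole argument carries over to $\widetilde G$ when $\St_{n-1}\notin\Rep G$; no deeper input is needed beyond Mathieu's theorem and Lemma~\ref{LemTiltCover}.
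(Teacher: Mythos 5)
Your proof is correct and follows essentially the same route as the paper: the socle computation via Donkin's decomposition and Lemma~\ref{LemTiltCover}, then factoring the coevaluation of $\St_{n-1}$ through the multiplicity-one summand $T(2(p^{n-1}-1)\rho)$ of $\St_{n-1}\otimes\St_{n-1}^*$ to extract a retraction of $f\otimes\id_{\St_{n-1}}$. The differences (dualising $L(\mu)^{(n-1)}$ rather than $T(\lambda)$ in the adjunction, invoking $\St^*\cong\St$, citing Mathieu's theorem where the paper just uses that tilting modules are closed under tensor) are cosmetic; the one slip is that your final epi-mono factorisation should read $(f\otimes\id_{\St})\circ\id_{\St}$ rather than $(\id_{\St})\circ(f\otimes\id_{\St})$.
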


\begin{proof}
Suppose $T(\lambda+p^{n-1}\mu)\in J_n\setminus I_n$, that is $\lambda\in(p^{n-1}-1)\rho+\Lambda_{n-1}$ and $\mu\in A$. Moving to the cover $\widetilde G$, there is a non-zero morphism $\unit\to T(\lambda)\otimes T(\mu)^{(n-1)}$ if and only if there is a non-zero morphism $T(\lambda)^*\to L(\mu)^{(n-1)}$ by adjunction. By Proposition~\ref{LemTiltCover} this holds if and only if $\lambda=2(p^{n-1}-1)\rho+p^{n-1}w_0\mu$ and $p^{n-1}\mu\in\Lambda_{n-1}$, which is only possible if $\mu=0$. Now, by highest weight theory $T(2(p^{n-1}-1)\rho)$ is a summand of $\St_{n-1}\otimes\St_{n-1}^*$ with multiplicity 1. Since this is the only summand with $\unit$ in its socle, the coevaluation morphism $\unit\to\St_{n-1}\otimes\St_{n-1}^*$ factors through it. This means the composition
\[\St_{n-1}\xrightarrow{f\otimes1}T(2(p^{n-1}-1)\rho)\otimes\St_{n-1}\hookrightarrow\St_{n-1}\otimes\St_{n-1}^*\otimes\St_{n-1}\xrightarrow{1\otimes\ev_{\St_{n-1}}}\St_{n-1}\]
is equal to $\id_{\St_{n-1}}$, and hence we have exhibited a splitting for $f\otimes\id_{\St_{n-1}}$.
\end{proof}

\begin{samepage}
\begin{prop}\label{PropUniqueIdeal}
There is exactly 1 tensor ideal in $\cT_n$ corresponding to the thick ideal $I_n$.
\end{prop}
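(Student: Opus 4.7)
The plan is to establish the equality $I_n^{\min}=I_n^{\max}$ of tensor ideals in $\cT_n$; since every tensor ideal $\cI\subseteq\cT_n$ with $\Ob(\cI)=I_n$ satisfies $I_n^{\min}\subseteq\cI\subseteq I_n^{\max}$, this is equivalent to the desired uniqueness. Concretely, I would argue by contraposition: let $f\colon X\to Y$ be a morphism in $\cT_n$ which does \emph{not} factor through any object of $I_n$ (i.e.\ $f\notin I_n^{\min}$), and exhibit an object $Z\notin I_n$ with $\id_Z$ in the tensor ideal generated by $f$, which forces $f\notin I_n^{\max}$.

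The first step is to pass to the dual mate $g=(f\otimes\id_{X^*})\circ\coev_X\colon\unit\to Y\otimes X^*$, which lies in the tensor ideal generated by $f$; the rigidity identity $f=(\id_Y\otimes\ev_X)\circ(g\otimes\id_X)$ shows that $g$ again does not factor through any object of $I_n$ (in particular $g\neq0$). Decomposing $Y\otimes X^*\in\cT_n$ into indecomposable tilting summands and splitting off the part lying in $I_n$, the component of $g$ into the remaining summands must be nonzero; composing with a projection, I obtain a nonzero morphism $g'\colon\unit\to C$ in the tensor ideal generated by $f$ with $C$ an indecomposable summand of $Y\otimes X^*$ lying in $\cT_n\setminus I_n=\{\unit\}\cup(J_n\setminus I_n)$. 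If $C=\unit$ then $g'$ is a nonzero scalar, so $\id_\unit$ itself belongs to the tensor ideal generated by $f$, and since $\unit\notin I_n$ the argument is complete.

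Otherwise $C=T(\nu)\in J_n\setminus I_n$, and the nonzero embedding $\unit\hookrightarrow T(\nu)$ places $\unit$ in the socle of $T(\nu)$. Proposition~\ref{PropTiltCover} then forces $\nu=2(p^{n-1}-1)\rho$, and the second half of the same proposition supplies a retraction $r$ with $r\circ(g'\otimes\id_{\St_{n-1}})=\id_{\St_{n-1}}$. Since $g'\otimes\id_{\St_{n-1}}$ lies in the tensor ideal generated by $f$, so does $\id_{\St_{n-1}}$; as $\St_{n-1}\in J_n\setminus I_n$ sits outside $I_n$, this exhibits $f\notin I_n^{\max}$ as required. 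If $\St_{n-1}$ is only defined over $\widetilde G$, I would run the entire argument in $\cT_n(\widetilde G)$ and then transfer back, using that $\cT_n(G),I_n(G)$ are the preimages of $\cT_n(\widetilde G),I_n(\widetilde G)$ so that uniqueness on the cover immediately descends.

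There is no substantial obstacle: the engine of the proof is Proposition~\ref{PropTiltCover}, whose two conclusions are precisely tailored to the two moves above (identifying the unique non-$\unit$ summand of $\cT_n\setminus I_n$ that can receive a map from $\unit$, and then splitting it after tensoring with $\St_{n-1}$). The only place requiring care is the reduction via duality and the careful tracking of which indecomposable summands of $Y\otimes X^*$ can witness a failure of $f$ to factor through $I_n$; once that is set up cleanly, everything else is formal manipulation with rigidity and tensor ideals.
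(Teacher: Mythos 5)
Your proposal is correct and follows essentially the same route as the paper's own proof: pass to the dual mate $g\colon\unit\to Y\otimes X^*$, observe that the only indecomposable summands outside $I_n$ that can receive a nonzero map from $\unit$ are $\unit$ itself and $T(2(p^{n-1}-1)\rho)$ (by Proposition~\ref{PropTiltCover}), and dispose of the latter using the splitting after tensoring with $\St_{n-1}$, working over $\widetilde G$ if necessary. The only cosmetic difference is that you phrase the argument as a contraposition ($f\notin I_n^{\min}\Rightarrow f\notin I_n^{\max}$) while the paper runs the equivalent contradiction ($f\in I_n^{\max}\Rightarrow f\in I_n^{\min}$).
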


\begin{proof}
We must show that $I_n^{\rm max}\subseteq I_n^{\rm min}$ as ideals in $\cT_n$. Let $f:X\to Y$ be a morphism in $I_n^{\rm max}$ and let $g=(f\otimes1)\coev_X$ be the corresponding morphism $\unit\to Y\otimes X^*$. If the restriction of $g$ to a summand of $Y\otimes X^*$ isomorphic to $\unit$ is non-zero, then $\id_\unit\in I_n^{\rm max}$ and $\unit\in I_n$, a contradiction. By Proposition~\ref{PropTiltCover}, the only other summands not in $I_n$ on which $g$ could be non-zero are isomorphic to $T(2(p^{n-1}-1)\rho)$. But the restriction of $g$ to such a summand would be split in $\Rep\widetilde G$ after tensoring with $\St_{n-1}$, meaning $\id_{\St_{n-1}}\in I_n^{\rm max}(\widetilde G)$ and $\St_{n-1}\in I_n(\widetilde G)$, another contradiction. So $g$ is only non-zero on summands in $I_n$, and thus $g$ factors through an object in $I_n$, which means $f=(1\otimes\ev_X)(g\otimes1)\in I_n^{\rm min}$.
\end{proof}
\end{samepage}

\subsection{} We denote the unique ideal in Proposition~\ref{PropUniqueIdeal} by $\cI_n(G)$, or just $\cI_n$. We define
\[\Ver_{p^n}(G)=\cC(\cT_n/\cI_n,J_n/I_n)\]
using the construction in Section~\ref{SecAbEnv}. One can quickly verify that $\Ver_{p^n}(\SL_2)$ matches the definition of $\Ver_{p^n}$ from \cite{BEO}, and $\Ver_{p^1}(G)$ matches the definition of $\Ver_p(G)$ in \cite[\S 4]{asymptotic} which is based on the category ${\bf Sm}$ in \cite[Theorem~4.11]{GK}. Note that if $G$ is a torus then $\cT_n(G)/\cI_n(G)=\Rep G=\Ver_{p^n}(G)$.

\begin{theorem}\label{ThmExistence}
$\Ver_{p^n}(G)$ is an abelian envelope of $\cT_n(G)/\cI_n(G)$.
\end{theorem}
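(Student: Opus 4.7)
The plan is to derive the theorem as a direct application of the general existence result Theorem~\ref{ThmCEOEnv} to the pseudo-tensor category $\cT_n$ with thick tensor ideal $I_n$. First I would confirm that $\cT_n$ really is a pseudo-tensor category in the sense of Section~\ref{SecTensorCats}: essential smallness, $\bk$-linearity, additivity, and the Karoubi property are inherited from $\Tilt G$; closure under $\otimes$ is Lemma~\ref{LemThickIdeal}; closure under duality is immediate from $T(\lambda)^*\cong T(-w_0\lambda)$ together with the identity $-w_0\bigl((p^{n-1}-1)\rho+\mu\bigr)=(p^{n-1}-1)\rho-w_0\mu\in(p^{n-1}-1)\rho+\Lambda^+$; and $\End(\unit)=\bk$ because $\unit=T(0)$. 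The symmetric braiding of $\Tilt G$ gives the natural isomorphism $X\otimes Y\xrightarrow{\sim}Y\otimes X$ demanded by the hypothesis of Theorem~\ref{ThmCEOEnv}.

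Next I would verify the two conditions on $I_n$ in Theorem~\ref{ThmCEOEnv}. The uniqueness condition (1), that there is a unique thick tensor ideal in $\cT_n$ strictly containing $I_n$ and contained in every such ideal, is exactly the final assertion of Lemma~\ref{LemThickIdeal}, realised by $J=J_n$. For the finiteness condition (2), the indecomposables in $J_n/I_n$ are the tilting modules $T(\lambda+p^{n-1}\mu)$ with $\lambda\in((p^{n-1}-1)\rho+\Lambda_{n-1})\cap X(T)$ and $\mu\in A\cap X(T)$, as recorded in Section~\ref{SecTIJDef}; both $\Lambda_{n-1}$ (by its defining inequalities $0\leq\langle\lambda,\alpha_i^\vee\rangle<p^{n-1}$) and $A$ (by the inequalities $0<\langle\lambda+\rho,\theta_j^\vee\rangle<p$ from Section~\ref{SecGCover}) contain only finitely many weights, so there are only finitely many such indecomposables up to isomorphism.

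With both conditions verified, Theorem~\ref{ThmCEOEnv} yields that $\cT_n/I_n^{\rm max}$ is separated with splitting objects $J_n/I_n$ and that $\cC(\cT_n/I_n^{\rm max},J_n/I_n)$ is an abelian envelope of $\cT_n/I_n^{\rm max}$. To identify this with $\Ver_{p^n}(G)=\cC(\cT_n/\cI_n,J_n/I_n)$ and with an envelope of $\cT_n/\cI_n$, I invoke Proposition~\ref{PropUniqueIdeal}, which gives $\cI_n=I_n^{\rm max}=I_n^{\rm min}$ in $\cT_n$, so that $\cT_n/I_n^{\rm max}=\cT_n/\cI_n$ on the nose. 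There is no real obstacle remaining at this point: all of the substantive work has already been done in Lemma~\ref{LemThickIdeal} (uniqueness of $J_n$ directly above $I_n$, via the cell structure from \cite{AnCells}) and Proposition~\ref{PropUniqueIdeal} (collapse of the lattice of tensor ideals lying over $I_n$ to the single ideal $\cI_n$, via the splitting argument of Proposition~\ref{PropTiltCover}); the statement of Theorem~\ref{ThmExistence} is what remains after feeding these inputs into the abstract machinery.
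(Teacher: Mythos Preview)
Your argument works when $G$ is semisimple, but there is a genuine gap for general reductive $G$: condition (2) of Theorem~\ref{ThmCEOEnv} can fail. The inequalities defining $\Lambda_{n-1}$ and $A$ in Section~\ref{SecGCover} only involve pairings with the simple coroots $\alpha_i^\vee$ and the coroots $\theta_j^\vee$ of the root system; these pair trivially with weights of the central torus $T_2$. Thus when $T_2\neq 1$ (for instance $G=\GL_m$, or $G$ a torus), both $\Lambda_{n-1}\cap X(T)$ and $A\cap X(T)$ are infinite, and $J_n/I_n$ has infinitely many isomorphism classes of indecomposable objects. So Theorem~\ref{ThmCEOEnv} does not apply to $\cT_n(G)$ directly.

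The paper handles this by first applying Theorem~\ref{ThmCEOEnv} to the derived subgroup $\cD G$, where the finiteness does hold, and extracting from it the conclusion that every object of $J_n(\cD G)$ is a splitting object in $\cT_n(\cD G)/\cI_n(\cD G)$. It then passes to $\cD G\times T_2$ via the identification $\cT_n(\cD G\times T_2)/\cI_n(\cD G\times T_2)\simeq(\cT_n(\cD G)/\cI_n(\cD G))\dot\otimes\Rep T_2$, which shows that objects of $J_n(\cD G\times T_2)$ are still splitting, and finally descends along the covering $\cD G\times T_2\twoheadrightarrow G$ using Lemma~\ref{LemVerpnCover} and Theorem~\ref{ThmAbEnv} (whose hypotheses require only that each $P_i$ have non-zero maps to finitely many $P_j$, a local finiteness that survives the torus factor). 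Your approach is essentially what the paper does in the semisimple step; you are just missing the reduction to that case.
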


\begin{proof}
Let $\cD G$ and $T_2$ be as defined in Section~\ref{SecGCover}. Then $J_n(\cD G)/I_n(\cD G)$ has finitely many isomorphism classes of indecomposable objects, so $\Ver_{p^n}(\cD G)$ is an abelian envelope of $\cT_n(\cD G)/\cI_n(\cD G)$ by Theorem~\ref{ThmCEOEnv}. This implies that all objects in $J_n(\cD G)$ are splitting objects in the category $\cT_n(\cD G)/\cI_n(\cD G)$. Let $\cC=\cT_n(\cD G\times T_2)/\cI_n(\cD G\times T_2)$, which is equivalent to $(\cT_n(\cD G)/\cI_n(\cD G))\dot\otimes\Rep T_2$ where $\dot\otimes$ is defined in Section~\ref{SecTensorCats}. Any indecomposable object $X\in J_n(\cD G\times T_2)$ is of the form $T(\lambda)\otimes L(\mu)$ with $T(\lambda)\in J_n(\cD G)$ and $\mu$ a weight of $T_2$. Thus the image of $X$ in $\cC$ is either zero or a splitting object, and hence also a splitting object in any tensor subcategory of $\cC$ containing $X$. The result then follows from Lemma~\ref{LemVerpnCover} below applied to the covering $\cD G\times T_2\twoheadrightarrow G$, followed by Theorem~\ref{ThmAbEnv}.
\end{proof}

\begin{lemma}\label{LemVerpnCover}
Any covering map $G\twoheadrightarrow G/N$ (meaning $N=\bigcap_{\lambda\in X'}\ker(\lambda)$ for some lattice $\mZ\Phi\subseteq X'\subseteq X(T)$) induces a full pseudo-tensor functor $\cT_n(G/N)/\cI_n(G/N)\to \cT_n(G)/\cI_n(G)$, and an inclusion of tensor categories $\Ver_{p^n}(G/N)\hookrightarrow\Ver_{p^n}(G)$.
\end{lemma}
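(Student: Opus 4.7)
The plan is to lift the covering at the level of representation categories and then track it through the construction of Section~\ref{SecAbEnv}. The pullback $\pi^{*}:\Rep(G/N)\hookrightarrow\Rep G$ along $\pi:G\twoheadrightarrow G/N$ identifies $\Rep(G/N)$ with the full tensor subcategory of representations whose weights lie in $X'$. Under this identification, the modules $L_{G/N}(\lambda)$, $\Delta_{G/N}(\lambda)$, $T_{G/N}(\lambda)$ for $\lambda\in X'\cap X(T)^+$ coincide with $L_G(\lambda)$, $\Delta_G(\lambda)$, $T_G(\lambda)$. Since the weight conditions defining $\cT_n$, $I_n$, $J_n$ depend only on highest weights measured in the common ambient lattice of the two covers, $\pi^{*}$ restricts to a fully faithful pseudo-tensor functor $\cT_n(G/N)\to\cT_n(G)$ sending $I_n(G/N)$ and $J_n(G/N)$ into $I_n(G)$ and $J_n(G)$ respectively.

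Because $\cI_n=I_n^{\min}$, the image of $\cI_n(G/N)$ lies in $\cI_n(G)$, so we obtain an induced functor $\overline F:\cT_n(G/N)/\cI_n(G/N)\to\cT_n(G)/\cI_n(G)$, which remains full because $\pi^{*}$ was. The main technical step, and the main obstacle I anticipate, is to show that $\overline F$ is also faithful: if $f:X\to Y$ is a morphism in $\cT_n(G/N)$ with $\pi^{*}(f)\in\cI_n(G)$, then $f\in\cI_n(G/N)$. The naive approach of factoring $\pi^{*}(f)$ through an object in $I_n(G)$ fails because that object need not lie in the image of $\pi^{*}$. Instead I would adapt the argument used in Proposition~\ref{PropUniqueIdeal}: transposing $f$ to $g=(f\otimes\id)\coev_X:\unit\to Y\otimes X^{*}$, membership of $f$ in $\cI_n$ is equivalent to $g$ vanishing on every summand of $Y\otimes X^{*}$ lying in $J_n\setminus I_n$ that has $\unit$ in its socle. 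By Proposition~\ref{PropTiltCover} these are only copies of $T(2(p^{n-1}-1)\rho)$, a module which already lies in $\pi^{*}(\Rep(G/N))$ since $2(p^{n-1}-1)\rho\in\mZ\Phi\subseteq X'$. Full faithfulness of $\pi^{*}$ (and closure of $\pi^{*}(\Rep(G/N))$ under direct summands) then implies that the decomposition of $Y\otimes X^{*}$ into such summands matches on both sides, so the vanishing condition transfers.

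Once $\overline F$ is fully faithful, the indecomposable splitting objects of $\cT_n(G/N)/\cI_n(G/N)$, namely $\{T_{G/N}(\lambda)\}$ for $\lambda\in((p^{n-1}-1)\rho+\Lambda_{n-1}+p^{n-1}A)\cap X'$, map bijectively onto a subset of the corresponding indexing set for $G$ with Hom spaces preserved. This yields an injection of the coalgebras $C_{G/N}\hookrightarrow C_G$ constructed in Section~\ref{SecAbEnv}, compatible with the monoidal structures coming from $\cT_n$. Corestriction along this injection gives the desired fully faithful tensor functor $\Ver_{p^n}(G/N)\hookrightarrow\Ver_{p^n}(G)$, and it sends simples to simples because simple subcoalgebras of $C_{G/N}$ remain simple when viewed as subcoalgebras of $C_G$.
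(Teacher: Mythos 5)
Your proof is essentially correct, but it takes a genuinely different route from the paper. You identify faithfulness of the induced functor on quotients as the main obstacle (a factorization of $\pi^{*}(f)$ through an $I_n(G)$-object need not lie in the image of $\pi^{*}$), and you resolve it by transposing to $g:\unit\to Y\otimes X^{*}$ and invoking Propositions~\ref{PropUniqueIdeal} and~\ref{PropTiltCover} to reduce to summands of type $T(2(p^{n-1}-1)\rho)$, which you observe lie in the image of $\pi^{*}$ because $2(p^{n-1}-1)\rho\in\mZ\Phi\subseteq X'$. The paper instead makes the single observation that $\cT_n(G)/\cI_n(G)$ decomposes, as an additive category, into blocks indexed by cosets of $X'$ in $X(T)$ (an indecomposable tilting module $T(\lambda)$ has all weights in $\lambda+\mZ\Phi\subseteq\lambda+X'$, so there are no morphisms between different cosets), and $\cT_n(G/N)/\cI_n(G/N)$ is exactly the block containing $\unit$. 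This makes both fullness and faithfulness immediate, and the coalgebra $C_G$ splits accordingly into a direct sum of subcoalgebras with $C_{G/N}$ the summand containing the counit block, so corestriction gives the inclusion of tensor categories directly. The block observation is cleaner and does the faithfulness work automatically; your argument handles it by hand but is still valid.

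Two small technical points. First, your characterization ``$f\in\cI_n$ iff $g$ vanishes on every summand of $Y\otimes X^{*}$ in $J_n\setminus I_n$ with $\unit$ in its socle'' omits the possibility of summands isomorphic to $\unit$ itself (which are not in $J_n$); this is harmless here since such a summand being in the support of $g$ would force $\id_\unit\in\cI_n$, but the iff as stated is incomplete. Second, the appeal to Proposition~\ref{PropTiltCover} is not really needed: by Krull--Schmidt and the full faithfulness of $\pi^{*}$ the indecomposable decomposition of $\pi^{*}(Y\otimes X^{*})$ is $\pi^{*}$ applied to the decomposition in $\Rep(G/N)$, and membership in $I_n$ is read off the highest weight the same way on both sides, so the vanishing of $g$ on all non-$I_n$ summands transfers directly without identifying which ones have $\unit$ in their socle.
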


\begin{proof}
As an additive category, $\cT_n(G)/\cI_n(G)$ decomposes into a direct sum of additive subcategories enumerated by cosets of $X'$, and $\cT_n(G/N)/\cI_n(G/N)$ is equivalent to the subcategory containing $\unit$. The tensor functor $\Ver_{p^n}(G/N)\to\Ver_{p^n}(G)$ then arises from the definition of $\cC(\cT,S)$ in Section~\ref{SecAbEnv}, with the simple top of a projective object in $\Ver_{p^n}(G/N)$ sent to the simple top of the corresponding projective in $\Ver_{p^n}(G)$.
\end{proof}

\begin{lemma}\label{LemVerpnTensor}
$\Ver_{p^n}(G_1\times G_2)\simeq\Ver_{p^n}(G_1)\boxtimes\Ver_{p^n}(G_2)$ for any groups $G_1,G_2$ satisfying the conditions of Section~\ref{SecAlgGrps}.
\end{lemma}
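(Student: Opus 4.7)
The plan is to realise both sides as comodule categories over the same coalgebra and then appeal to \cite[Proposition~1.11.2]{EGNO}, which identifies $\Comod C_1 \boxtimes \Comod C_2$ with $\Comod(C_1 \otimes C_2)$. By the $\cC(\cT,S)$ construction recalled in Section~\ref{SecAbEnv}, each $\Ver_{p^n}(G)$ is $\Comod C(G)$ where $C(G) = \bigoplus_{i,j} \Hom(P_i,P_j)^*$ runs over a set of representatives of the indecomposable projectives. It therefore suffices to exhibit a coalgebra isomorphism $C(G_1 \times G_2) \cong C(G_1) \otimes C(G_2)$ that is compatible with the induced monoidal structures.

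For the indecomposable projectives, I would use $\Rep(G_1 \times G_2) \simeq \Rep G_1 \boxtimes \Rep G_2$ (standard for affine group schemes over $\bk$) to conclude that every indecomposable tilting module of $G_1 \times G_2$ is an external tensor product $T(\lambda_1) \boxtimes T(\lambda_2)$. The relevant root-theoretic data factorises: $\rho = (\rho_1,\rho_2)$, $\Lambda_{n-1} = \Lambda_{n-1}(G_1) \times \Lambda_{n-1}(G_2)$, and $A = A_1 \times A_2$. Consequently $T(\lambda_1) \boxtimes T(\lambda_2) \in J_n(G_1 \times G_2) \setminus I_n(G_1 \times G_2)$ if and only if $T(\lambda_i) \in J_n(G_i) \setminus I_n(G_i)$ for each $i$, giving the expected bijection of indecomposable projectives.

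The key technical step is that the Hom spaces between these projectives factor as tensor products even after passing to the quotient by $\cI_n$. The factorisation $\Hom(T_1 \boxtimes T_2, S_1 \boxtimes S_2) = \Hom(T_1,S_1) \otimes \Hom(T_2,S_2)$ in $\Rep(G_1 \times G_2)$ is standard, and by Proposition~\ref{PropUniqueIdeal} we have $\cI_n = I_n^{\min}$, so a morphism lies in $\cI_n(G_1 \times G_2)$ iff it factors through an object of $I_n(G_1 \times G_2)$. Every indecomposable summand of such an object has the form $T(\nu_1) \boxtimes T(\nu_2)$ with $\nu_i = (p^{n-1}-1)\rho_i + \nu_i' + p^{n-1}\mu_i$ and $(\mu_1,\mu_2) \notin A_1 \times A_2$, forcing $T(\nu_i) \in I_n(G_i)$ for at least one $i$. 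Chasing a morphism's factorisation through the summands of such an object term-by-term, using the Hom-factorisation on each summand, then yields $\cI_n(G_1 \times G_2) \cap \bigl(\Hom \otimes \Hom\bigr) = \cI_n(G_1) \otimes \Hom + \Hom \otimes \cI_n(G_2)$, with the reverse inclusion being immediate from the ideal property of $\cI_n$.

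Assembling these observations produces the required coalgebra isomorphism, and monoidal compatibility follows from $(T_1 \boxtimes T_2) \otimes (S_1 \boxtimes S_2) = (T_1 \otimes S_1) \boxtimes (T_2 \otimes S_2)$ in $\Rep(G_1 \times G_2)$. The main obstacle is the ideal-factorisation step in the previous paragraph: a morphism in $\cI_n(G_1 \times G_2)$ only factors through \emph{some} object of $I_n(G_1 \times G_2)$, and one has to carefully split the factorisation across the indecomposable summands of that object to express it as a finite sum of elementary tensors, each of which has at least one factor in $\cI_n(G_1)$ or $\cI_n(G_2)$.
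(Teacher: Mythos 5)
Your proposal is correct and follows the second of the two routes the paper suggests: the paper's proof is very terse, establishing $\Tilt(G_1 \times G_2) \simeq \Tilt G_1 \dot\otimes \Tilt G_2$ from \cite[\S E.7]{Jantzen} and stating that ``similarly'' $\cT_n(G)/\cI_n(G) \simeq \cT_n(G_1)/\cI_n(G_1) \dot\otimes \cT_n(G_2)/\cI_n(G_2)$, then invoking either \cite[Theorem~6.1.3]{quoprop} or the $\cC(\cT,S)$ construction; your argument is precisely the $\cC(\cT,S)$/comodule route spelled out. What you add is the explicit verification hidden behind the paper's ``similarly'': the factorisation $A = A_1 \times A_2$, $\Lambda_{n-1} = \Lambda_{n-1}(G_1) \times \Lambda_{n-1}(G_2)$, and the identity $\cI_n(G_1\times G_2) \cap (\Hom\otimes\Hom) = \cI_n(G_1)\otimes\Hom + \Hom\otimes\cI_n(G_2)$, which is exactly what one needs to match the two sides. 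One small point worth making tighter: the reverse inclusion in that last identity is not quite ``immediate from the ideal property'' in full generality — if $a \in \cI_n(G_1)$ factors through $Z \in I_n(G_1)$, the external product $Z \boxtimes R$ lies in $I_n(G_1\times G_2)$ only because the second factor $R$ also lies in $J_n(G_2)$ (so that the highest-weight decomposition $(p^{n-1}-1)\rho + \Lambda_{n-1} + p^{n-1}(\Lambda^+\setminus A)$ holds on both factors simultaneously). Since the Hom spaces entering the coalgebra $C$ are all between objects of $J_n \setminus I_n$, this is automatic in your situation, but it should be stated rather than left as ``immediate.''
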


\begin{proof}
Write $G=G_1\times G_2$. By \cite[\S E.7]{Jantzen}, indecomposable tilting modules of $G$ are tensor products of indecomposable tilting modules of $G_1$ and $G_2$. Hence $\Tilt(G)\simeq\Tilt G_1\dot\otimes\Tilt G_2$ where $\dot\otimes$ is defined in Section~\ref{SecTensorCats}, and similarly
\[\cT_n(G)/\cI_n(G)\simeq\cT_n(G_1)/\cI_n(G_1)\dot\otimes\cT_n(G_2)/\cI_n(G_2).\]
The result then follows from \cite[Theorem~6.1.3]{quoprop} or directly from the construction of $\cC(\cT,S)$ in Section~\ref{SecAbEnv}.
\end{proof}

Next we show that $\Ver_{p^n}(G)$ has a tensor functor to $\Ver_{p^n}$ arising from the restriction along a principal map $\SL_2\to G$. First, we provide the following extension of \cite[Lemma~3.2.5]{CEN}:

\begin{samepage}
\begin{lemma}\label{LemCharacters}
For an $\SL_2$-module $X$, write its character as a polynomial $\ch_X$, that is
\[\ch_X(x)=\sum_{i\in\mZ}m(i)x^i\]
where $m(i)$ is the multiplicity of the weight $i\in\mZ$ in $X$. Then for $a\in\mN$, $n\in\mN_{>0}$ and $\omega$ a primitive $p^n$-th root of unity (or $2^{n+1}$-th root of unity if $p=2$), we have $\ch_{T_a}(\omega)=0$ if and only if $T_a\in I_n(\SL_2)$ (meaning $a\geq p^n-1$).
\end{lemma}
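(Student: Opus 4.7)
The plan is to proceed by strong induction on $a$, using Donkin's tensor product theorem for $\SL_2$ to split off the leading base-$p$ block. Specifically, for $a \geq p - 1$ write $a = a_0 + p a_1$ with $p - 1 \leq a_0 \leq 2p - 2$, so that $T_a \cong T_{a_0} \otimes T_{a_1}^{(1)}$ and hence
\[\ch_{T_a}(\omega) = \ch_{T_{a_0}}(\omega) \cdot \ch_{T_{a_1}}(\omega^p).\]
The key observation is that $\omega^p$ is a primitive $p^{n-1}$-th root of unity (or $2^n$-th root for $p = 2$), so the induction hypothesis at level $n - 1$ controls the second factor, while an explicit calculation handles the first.

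First I would verify the base case $0 \leq a \leq 2p - 2$. The Weyl structure gives $T_a = \Delta_a$ for $a \leq p - 2$, and for $p - 1 \leq a \leq 2p - 2$ the module $T_a$ has Weyl composition factors $[\Delta_{2p-2-a}, \Delta_a]$. A direct manipulation of the Weyl character formula yields the identity
\[\ch_{T_a}(x)(x - x^{-1}) = (x^{2p} - 1)(x^{a+1-2p} + x^{-a-1})\]
in the second block, while $\ch_{T_a}(\omega) \neq 0$ in the first block (since vanishing would force $p^n \mid 2(a+1)$, which fails as $2(a+1) < 2p$). This shows $\ch_{T_a}(\omega) = 0$ in the base-case range if and only if $a \geq p - 1$ and $\omega^{2p} = 1$; the latter holds precisely when $n = 1$ (for $p$ odd this requires $p^{n-1} \mid 2$, and for $p = 2$ this requires $2^{n+1} \mid 4$). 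This matches $T_a \in I_n(\SL_2)$ in the base range, which forces $n = 1$ since $a \leq 2p - 2 < p^2 - 1$.

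For the inductive step $a \geq 2p - 1$ (so $a_1 \geq 1$), the multiplicative identity splits into two cases. If $n = 1$, the base case gives $\ch_{T_{a_0}}(\omega) = 0$, so $\ch_{T_a}(\omega) = 0$; also $T_a \in I_1$ since $a \geq p - 1$. If $n \geq 2$, the base-case identity shows $\ch_{T_{a_0}}(\omega) \neq 0$: the factor $\omega^{2p} - 1$ is non-zero because $n \geq 2$, and the factor $\omega^{a_0+1-2p} + \omega^{-a_0-1}$ is non-zero since $\omega^{2(a_0+1-p)} = -1$ is impossible when $\omega$ has odd order (for $p$ odd) or by direct check on $a_0 \in \{1, 2\}$ (for $p = 2$). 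The induction hypothesis then gives $\ch_{T_{a_1}}(\omega^p) = 0$ iff $a_1 \geq p^{n-1} - 1$, and the elementary bookkeeping $a \geq p^n - 1 \Leftrightarrow a_1 \geq p^{n-1} - 1$ (given $a_0 \in [p-1, 2p-2]$) closes the induction.

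The main obstacle is the $p = 2$ case, where the statement uses primitive $2^{n+1}$-th roots rather than $2^n$-th roots. This shift is necessary because the ``extra factor of $2$'' required to produce zeros of Weyl characters via $\omega^{2(a+1)} = 1$ lands us at order $2 \cdot 2^n = 2^{n+1}$ rather than $2 \cdot p^n$ when $p = 2$. The bookkeeping must be tracked consistently so that $\omega^p = \omega^2$ remains a primitive $2^n$-th root (to feed the induction), so that the base-case identifications $\ch_{T_1}(\omega) = 0 \Leftrightarrow n = 1$ and $\ch_{T_2}(\omega) = 0 \Leftrightarrow n = 1$ hold, and so that the non-vanishing of $\omega^{2p} - 1 = \omega^4 - 1$ for $n \geq 2$ follows from $2^{n+1} \nmid 4$.
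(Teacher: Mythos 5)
Your proof is correct, and it takes a genuinely different inductive route from the paper's. The paper inducts on the \emph{level} of $a$, treating $a\leq p^2-2$ as a base case and then, for $p^m-1\leq a\leq p^{m+1}-2$ with $m\geq 2$, peeling off the top two base-$p$ digits via a three-fold Donkin factorization $T_a\cong T_{a'}\otimes T_b^{(m-1)}\otimes T_c^{(m)}$ with $a'\in[p^{m-1}-1,2p^{m-1}-2]$, $b\in[p-1,2p-2]$, $c\in[0,p-2]$; it then evaluates the resulting three-term product of characters, invoking the inductive hypothesis on $a'$ and the base case on $b$ and $c$. Your version instead does strong induction on $a$ itself, peels off the \emph{bottom} digit via $T_a\cong T_{a_0}\otimes T_{a_1}^{(1)}$, and settles the first factor by an explicit closed form. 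This buys a smaller explicit base case ($a\leq 2p-2$ vs.\ $a\leq p^2-2$, the latter being asserted but not computed in the paper), a single-step factorization rather than a two-level one, and a uniform treatment of every $\omega$ rather than the paper's focus on $\omega_n$ and $\omega_{n+1}$ with the other cases left implicit. Both arguments are ultimately Donkin plus the Weyl character formula evaluated at roots of unity, but the direction of the digit-peel and the structure of the induction are genuinely different.

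One cosmetic slip: at $a=p-1$ we have $2p-2-a=p-1$, and $T_{p-1}=\Delta_{p-1}$ has a single Weyl factor, so your displayed identity $\ch_{T_a}(x)(x-x^{-1})=(x^{2p}-1)(x^{a+1-2p}+x^{-a-1})$ overcounts by a factor of $2$ at that one value (it gives $2(x^p-x^{-p})$ rather than $x^p-x^{-p}$). Since this does not change the zero set, the conclusion $\ch_{T_{p-1}}(\omega)=0\Leftrightarrow\omega^{2p}=1$ still holds, but you should restrict the two-factor description to $p\leq a\leq 2p-2$ (as the paper does in Section~\ref{SecSL2Tensor}) and treat $a=p-1$ with the single Weyl factor.
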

\end{samepage}

\begin{proof}
Recall from \cite[\S 3.2.3]{CEN} that the character of the Weyl module $\Delta_a$ is the polynomial $x^a+x^{a-2}+\cdots+x^{-a}=(x^{a+1}-x^{-a-1})/(x-x^{-1})$. Using this fact and the description of tilting modules of $\SL_2$ from Section~\ref{SecSL2Tensor}, one can quickly deduce that the lemma is true for $a\leq p^2-2$. We now consider $p^n-1\leq a\leq p^{n+1}-2$ for $n\geq2$ using induction on $n$. We have
\[\begin{aligned}T(a)&=T(a'+p^{n-1}b)\otimes T(c)^{(n)}\\&=T(a')\otimes T(b)^{(n-1)}\otimes T(c)^{(n)}\end{aligned}
\quad\text{for some $a',b,c$ with}\quad
\begin{array}{l}p^{n-1}-1\leq a'\leq 2p^{n-1}-2,\\p-1\leq b\leq 2p-2,\\0\leq c\leq p-2\end{array}\]
and thus $\ch_{T_a}(x)=\ch_{T_{a'}}(x)\ch_{T_b}(x^{p^{n-1}})\ch_{T_c}(x^{p^n})$. By induction we have $\ch_{T_{a'}}(\omega_{n+1})\neq 0$, $\ch_{T_b}(\omega_{n+1}^{p^{n-1}})\neq0$, $\ch_{T_c}(\omega_{n+1}^{p^n})\neq0$ and $\ch_{T_b}(\omega_n^{p^{n-1}})=0$ giving the required result.
\end{proof}

\begin{lemma}\label{LemStImage}
Suppose $p\geq3$ and $G$ is not a torus. Then the restriction of $\St_n(G)$ to $\SL_2$ along any principal morphism $\phi:\SL_2\to G$ is in $I_n(\SL_2)\setminus I_{n+1}(\SL_2)$.
\end{lemma}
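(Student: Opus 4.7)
The plan is to prove the two containments separately: $\phi^*(\St_n) \in I_n(\SL_2)$ by induction on $n$ using Donkin's theorem, and $\phi^*(\St_n) \notin I_{n+1}(\SL_2)$ by a direct Weyl character computation. Along the way, it is convenient to reduce to $G$ simply-connected semisimple by factoring $\phi$ through $\widetilde{\cD G}$, so that $\rho$ is a genuine weight and the product form of the Weyl character formula applies.

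For the containment, the base case $n=1$ follows from Section~\ref{SecSL2Tensor}: since $G$ is not a torus, some simple component has Coxeter number $h_j\geq 2$, and $\langle (p-1)\rho+\rho,\theta_j^\vee\rangle=p(h_j-1)\geq p$, so $(p-1)\rho\notin A$ and hence $\phi^*(\St_1)=F(T((p-1)\rho))\in I_1(\SL_2)$. For the inductive step, Donkin's theorem gives $\St_n(G)=\St_1(G)\otimes\St_{n-1}(G)^{(1)}$, and since $\phi^*$ commutes with Frobenius we obtain $\phi^*(\St_n)=\phi^*(\St_1)\otimes\phi^*(\St_{n-1})^{(1)}$. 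For any summand $T_a$ of $\phi^*(\St_1)$ (so $a\geq p-1$) and $T_b$ of $\phi^*(\St_{n-1})$ (so $b\geq p^{n-1}-1$ by induction), I would write $a=c+pd$ with $p-1\leq c\leq 2p-2$, so Donkin's theorem gives $T_a=T_c\otimes T_d^{(1)}$ and hence $T_a\otimes T_b^{(1)}=T_c\otimes(T_d\otimes T_b)^{(1)}$. Since $T_b\in I_{n-1}$ and $I_{n-1}$ is a thick tensor ideal, every summand $T_e$ of $T_d\otimes T_b$ has $e\geq p^{n-1}-1$; applying Donkin's theorem once more, $T_c\otimes T_e^{(1)}=T_{c+pe}$ with $c+pe\geq(p-1)+p(p^{n-1}-1)=p^n-1$, so each final summand is in $I_n(\SL_2)$.

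For the non-containment, I would apply the Weyl character formula to $\St_n=L((p^n-1)\rho)$ and restrict along $\phi^*(\alpha)=2\height(\alpha)$ to obtain
\[\ch_{\phi^*(\St_n)}(x)=\prod_{\alpha\in\Phi^+}\frac{x^{p^n h_\alpha}-x^{-p^n h_\alpha}}{x^{h_\alpha}-x^{-h_\alpha}},\]
where $h_\alpha=\height(\alpha)\leq h-1<p$. Evaluating at a primitive $p^{n+1}$-th root of unity $\omega$, each denominator is nonzero since $p^{n+1}\nmid 2h_\alpha$, and each numerator equals $\zeta^{h_\alpha}-\zeta^{-h_\alpha}$ with $\zeta=\omega^{p^n}$ a primitive $p$-th root, which is nonzero since $1\leq h_\alpha<p$. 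Thus $\ch_{\phi^*(\St_n)}(\omega)\neq0$, and combined with Lemma~\ref{LemCharacters} (which gives $\ch_{T_a}(\omega)=0$ for all $a\geq p^{n+1}-1$), at least one indecomposable tilting summand of $\phi^*(\St_n)$ has highest weight $<p^{n+1}-1$, so $\phi^*(\St_n)\notin I_{n+1}(\SL_2)$. The main obstacle is the inductive step, where one must carefully combine Donkin's decomposition with the Frobenius twist; the character half, by contrast, is essentially a bookkeeping exercise once the bound $h_\alpha<p$ is in hand.
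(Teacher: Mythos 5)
Your proof is correct, and for the containment $\phi^*(\St_n)\in I_n(\SL_2)$ it takes a genuinely different (and, I would argue, more watertight) route than the paper. The paper handles both directions purely via the restricted Weyl character formula: it writes $\ch = g(x^{p^n})/g(x)$, shows $g(\omega_r)\neq 0$ at primitive $p^r$-th roots of unity, and then observes $\ch(\omega_n)=g(1)/g(\omega_n)=0$ while $\ch(\omega_{n+1})=g(\omega_1)/g(\omega_{n+1})\neq 0$, citing Lemma~\ref{LemCharacters} for both conclusions. Your treatment of the $\notin I_{n+1}$ half is essentially identical to the paper's (your product formula $\prod_\alpha \frac{x^{p^n h_\alpha}-x^{-p^n h_\alpha}}{x^{h_\alpha}-x^{-h_\alpha}}$ is exactly the paper's $g(x^{p^n})/g(x)$ with $h_\alpha=\phi^*(\alpha)/2=\langle\alpha,\rho^\vee\rangle=\height(\alpha)$). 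For the $\in I_n$ half, however, you instead give an induction: the base case via the Gelfand--Kazhdan criterion $F(T(\lambda))\in I_1(\SL_2)\iff\lambda\notin A$ applied to $\lambda=(p-1)\rho$, and the inductive step via $\St_n=\St_1\otimes\St_{n-1}^{(1)}$ and repeated applications of the $\SL_2$ Donkin tensor product theorem to push all summand highest weights up to at least $p^n-1$. This buys you something concrete: the paper's step ``$\ch(\omega_n)=0$, hence $\phi^*(\St_n)\in I_n$ by Lemma~\ref{LemCharacters}'' implicitly appeals to the converse of the easy direction of that lemma applied to a \emph{sum} of indecomposable tiltings, which is not automatic --- $\ch_{T_a}(\omega_n)$ for $a<p^n-1$ need not all have the same sign (already for $p=3$, $n=1$ one has $\ch_{T_0\oplus T_1}(\omega_1)=0$ but $T_0\oplus T_1\notin I_1$), so the vanishing of the total character alone does not force every summand into $I_n$. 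Your Donkin induction avoids this issue entirely. One small point of precision: to conclude $\zeta^{h_\alpha}-\zeta^{-h_\alpha}\neq 0$ you actually need $p\nmid 2h_\alpha$, not just $1\leq h_\alpha<p$; this holds because $p$ is odd while $2h_\alpha$ is even and lies strictly between $0$ and $2p$, which is the same parity observation the paper makes about $\phi^*(\alpha)$.
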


\begin{proof}
Since $\St_n$ is a Weyl module, its weights are given by the Weyl character formula. Thus the restriction of $\St_n$ to $\Rep\SL_2$ has character $\ch(x)$ given by
\[\ch(x)=\sum_{w\in W}\varepsilon(w)x^{\phi^*(w(p^n\rho))}\bigg/\sum_{w\in W}\varepsilon(w)x^{\phi^*(w(\rho))}=\frac{g(x^{p^n})}{g(x)}\]
where $\varepsilon:W\to\{1,-1\}$ is the sign map, $\phi^*:X(T)\to\mZ$ is restriction of weights to $\SL_2$, and
\[g(x)=\sum_{w\in W}\varepsilon(w)x^{\phi^*(w(\rho))}=\prod_{\alpha\in\Phi^+}(x^{\phi^*(\alpha)/2}-x^{-\phi^*(\alpha)/2}).\]
Notice that $g(1)=\sum_{w\in W}\varepsilon(w)=0$. If we can show $g(\omega_r)\neq0$ for any $p^r$-th root of unity $\omega_r$ and $r\geq1$, then we will have $\ch(\omega_n)=g(1)/g(\omega_n)=0$ and $\ch(\omega_{n+1})=g(\omega_1)/g(\omega_{n+1})\neq0$, completing the proof by Lemma~\ref{LemCharacters}.

For any $\alpha\in\Phi^+$, by the results of Section~\ref{SecPrincipal} (or \cite[Proposition~3.2.1]{CEN}) we have
\[0<\phi^*(\alpha)=\sum_{\beta\in\Phi^+}\langle\alpha,\beta^\vee\rangle=2\langle\alpha,\rho^\vee\rangle\leq 2h-2<2p\]
where $\rho^\vee$ is the Weyl vector of the dual root system $\Phi^\vee$. Moreover, $\phi^*(\alpha)$ is even, since $\alpha$ is a weight of $G/Z$ and $\phi$ restricts to $\PGL_2\to G/Z$, where $Z$ is the center of $G$. Thus for $p\neq2$ and $r\geq1$, $p^r$ does not divide $\phi^*(\alpha)$ and thus $\omega_r^{\phi^*(\alpha)/2}-\omega_r^{-\phi^*(\alpha)/2}$ is non-zero. This means $g(\omega_r)\neq0$ as required.
\end{proof}

\begin{prop}\label{PropRestriction1}
If $\SL_2\to G$ is a principal morphism, then the corresponding functor $\Rep G\to\Rep\SL_2$ restricts to a functor $F:\cT_n(G)\to\cT_n(\SL_2)$, and $\cI_n(G)=F^{-1}(\cI_n(\SL_2))$. This induces a tensor functor $\Ver_{p^n}(G)\to\Ver_{p^n}$.
\end{prop}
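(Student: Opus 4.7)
The plan is to verify that $F$ respects both the subcategories $\cT_n$ and the corresponding tensor ideals, then to invoke the universal property of the abelian envelope. First, restriction along a principal morphism sends tilting modules to tilting modules by the \cite{GK} result recalled in Section~\ref{SecSL2Tensor}, so $F$ restricts to $\Tilt G\to\Tilt\SL_2$. Writing $\lambda=\lambda_0+p^{n-1}\mu$ with $\lambda_0\in(p^{n-1}-1)\rho+\Lambda_{n-1}$ and $\mu\in\Lambda^+$, Donkin's tensor product theorem together with highest-weight considerations shows that $T(\lambda_0)$ is a direct summand of the tilting module $\St_{n-1}(G)\otimes T(\lambda_0-(p^{n-1}-1)\rho)$, and hence every indecomposable in $J_n(G)$ lies in the thick tensor ideal of $\Tilt G$ generated by $\St_{n-1}(G)$. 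Combining this with Lemma~\ref{LemStImage} (which gives $F(\St_{n-1}(G))\in I_{n-1}(\SL_2)=J_n(\SL_2)$) and the fact that $J_n(\SL_2)$ is a thick tensor ideal, tensoriality of $F$ yields $F(J_n(G))\subseteq J_n(\SL_2)$, and hence $F(\cT_n(G))\subseteq\cT_n(\SL_2)$.

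For the ideal equality, both $\cI_n(G)$ and $F^{-1}(\cI_n(\SL_2))$ are tensor ideals in $\cT_n(G)$, so by the uniqueness statement in Proposition~\ref{PropUniqueIdeal} it suffices to show their object classes coincide, i.e.~that $F(T(\lambda))\in I_n(\SL_2)$ if and only if $T(\lambda)\in I_n(G)$ for indecomposable $T(\lambda)\in\cT_n(G)$. For the forward direction, the Donkin decomposition $T(\lambda)=T(\lambda_0)\otimes T(\mu)^{(n-1)}$ with $\mu\notin A$ gives $F(T(\mu))\in I_1(\SL_2)$ by GK, so every tilting summand of $F(T(\mu))$ has weight at least $p-1$; combined with the first paragraph, I would decompose any tilting summand $T_a\otimes T_b^{(n-1)}$ of $F(T(\lambda))$ via iterated use of Donkin's theorem in $\SL_2$, splitting $T_a$ for $a\geq 2p^{n-1}-1$ as $T_{a'}\otimes T_c^{(n-1)}$ with $a'\in[p^{n-1}-1,2p^{n-1}-2]$ and then absorbing $T_c\otimes T_b\in I_1(\SL_2)$ into the Frobenius twist, so that every resulting summand has weight at least $(p^{n-1}-1)+(p-1)p^{n-1}=p^n-1$ and therefore lies in $I_n(\SL_2)$. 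For the reverse direction, Lemma~\ref{LemThickIdeal} yields $Z\in J_n(G)$ with $\St_{n-1}(G)$ a direct summand of $T(\lambda)\otimes Z$ whenever $T(\lambda)\in J_n(G)\setminus I_n(G)$, so if $F(T(\lambda))\in I_n(\SL_2)$ then thickness forces $F(\St_{n-1}(G))\in I_n(\SL_2)$, contradicting Lemma~\ref{LemStImage}.

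Finally, the induced pseudo-tensor functor $\cT_n(G)/\cI_n(G)\to\cT_n(\SL_2)/\cI_n(\SL_2)$ composes with the canonical functor $\cT_n(\SL_2)/\cI_n(\SL_2)\to\Ver_{p^n}$ to give a pseudo-tensor functor with target a tensor category, and the universal property established in Theorem~\ref{ThmExistence} extends it uniquely to the desired tensor functor $\Ver_{p^n}(G)\to\Ver_{p^n}$. The main obstacle I anticipate is the forward direction of the ideal equality: since the Frobenius twist $F(T(\mu))^{(n-1)}$ is generally not a tilting module of $\SL_2$, one cannot reduce directly to the $n=1$ GK result, and the decomposition argument must instead exploit that $F(T(\lambda_0))\otimes F(T(\mu))^{(n-1)}$ is tilting (being the restriction of $T(\lambda)$) in order to identify its actual tilting summands via Donkin's theorem in $\SL_2$.
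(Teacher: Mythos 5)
Your overall strategy mirrors the paper's: use Lemma~\ref{LemThickIdeal} to reduce membership of thick ideals to the behaviour of $\St_{n-1}$, invoke Lemma~\ref{LemStImage} and the GK result to pin down where $F(\St_{n-1})$ and $F(T(\mu))$ land, upgrade the thick-ideal equality to an equality of tensor ideals via Proposition~\ref{PropUniqueIdeal}, and finish with the universal property. Your forward direction of the ideal equality actually fills in a Donkin computation (splitting $T_a=T_{a'}\otimes T_c^{(n-1)}$ and absorbing $T_c\otimes T_b\in I_1(\SL_2)$ into the Frobenius twist) that the paper's proof asserts in one line; that spelling-out is correct and welcome.

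However, there is a genuine gap: both paragraphs of your proof hinge on Lemma~\ref{LemStImage}, which is stated and proved only for $p\geq3$ and $G$ not a torus (its proof uses $p^r\nmid\phi^*(\alpha)$, which fails when $p=2$). You need $F(\St_{n-1})\in J_n(\SL_2)\setminus I_n(\SL_2)$ both to show $F(\cT_n(G))\subseteq\cT_n(\SL_2)$ and to derive the contradiction in your reverse direction, so the $p=2$ case cannot simply be ignored. The paper handles it separately: the hypothesis $p\geq h$ forces every irreducible component of $\Phi$ to be of type $A_1$ when $p=2$, so $\widetilde{\cD G}\cong(\SL_2)^m$ and restriction along the principal map is the $m$-fold tensor product $\Tilt\SL_2^{\dot\otimes m}\to\Tilt\SL_2$, for which the image of $\St_{n-1}$ can be checked directly to lie in $J_n(\SL_2)\setminus I_n(\SL_2)$. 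You should add this case (and dispose of the torus case, which is trivial since $F$ is then the fibre functor and $\cI_n$ of a torus is zero). It is also worth stating explicitly, as the paper does, that one may reduce to $G=\widetilde{\cD G}$ so that $\St_{n-1}$ exists and Donkin's theorem applies cleanly; your argument tacitly assumes this. Finally, a small citation point: the fact that restriction along a principal morphism preserves tilting modules is not part of the GK result recalled in Section~\ref{SecSL2Tensor} (which concerns negligibility); it is the separate standard fact that restriction along a good embedding preserves good and Weyl filtrations.
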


\begin{proof}
If $G$ is a torus then the result is trivial, so assume $G$ is not a torus. By the reasoning in Sections~\ref{SecPrincipal} and \ref{SecTIJDef}, we may also assume without loss of generality that $G=\widetilde{\cD G}$ is semisimple and simply-connected. If $p\geq3$, then by Lemma~\ref{LemStImage} the restriction of $\St_{n-1}$ along a principal map lands in $J_n(\SL_2)\setminus I_n(\SL_2)$. If instead $p=2$, then the constraint $h\leq p$ means $G=(\SL_2)^m$ for some $m\geq1$, so restriction to a principal $\SL_2$ is the tensor product map $\Tilt G\simeq\Tilt\SL_2\dot\otimes\cdots\dot\otimes\Tilt\SL_2\to\Tilt\SL_2$ which sends $\St_{n-1}$ to $\St_{n-1}^{\otimes n}$ which is in $J_n(\SL_2)\setminus I_n(\SL_2)$. Thus in all cases the restriction $\Rep G\to\Rep\SL_2$ sends $\cT_n(G)$ to $\cT_n(\SL_2)$ so that $F$ is well-defined, and moreover $\St_{n-1}\not\in F^{-1}(I_n(\SL_2))$ so we have $F^{-1}(I_n(\SL_2))\subseteq I_n(G)$ by Lemma~\ref{LemThickIdeal}. Now, as described in Section~\ref{SecSL2Tensor}, \cite[Proposition~4.4]{GK} gives us $F(T(\mu))\in I_1(\SL_2)$ for $\mu\in\Lambda^+\setminus A$. So for $T(\lambda+p^{n-1}\mu)\in I_n(G)$ with $\lambda\in(p^{n-1}-1)\rho+\Lambda_{n-1}$ and $\mu\in\Lambda^+\setminus A$ we have
\[F(T(\lambda+p^{n-1}\mu))\cong F(T(\lambda))\otimes F(T(\mu))^{(p-1)}\in I_n(\SL_2).\]
Hence $I_n(G)=F^{-1}(I_n(\SL_2))$, and then Proposition~\ref{PropUniqueIdeal} gives $\cI_n(G)=F^{-1}(\cI_n(\SL_2))$. A functor $\Ver_{p^n}(G)\to\Ver_{p^n}$ then arises from the universal property of the abelian envelope.
\end{proof}

\section{A more comprehensive construction}\label{SecExpansion}

\subsection{}\label{SecBarDef} Fix an integer $n\geq1$. We define the following enlarged versions of $\cT_n$, $I_n$ and $J_n$:
\begin{enumerate}
\item Let $\overline\cT_n(G)$ be the full subcategory of $\Rep G$ with objects $X$ such that $X\otimes S\in\Tilt G$ for some $S\in J_n\setminus I_n$ (or equivalently all $S$ by Lemma~\ref{LemThickIdeal}).
\item Let $\overline I_n(G)$ be the class of objects $X$ in $\overline\cT_n(G)$ such that $X\otimes S\in I_n(G)$ for some $S\in J_n\setminus I_n$ (or equivalently all $S$ by Lemma~\ref{LemThickIdeal}).
\item Let $\overline J_n(G)$ be the class of objects in $\overline\cT_n(G)$ whose indecomposable summands are in $J_n(G)\cup\overline I_n(G)$.
\end{enumerate}
If $G$ is clear from context, we will just write $\overline\cT_n$, $\overline I_n$ and $\overline J_n$. Once again, $\overline\cT_n(G),\overline\cI_n(G),\overline\cJ_n(G)$ are the restrictions/preimages of the corresponding categories/classes for $\widetilde G$ or $\widetilde{\cD G}$. Note that instead of taking $S$ arbitrary in the above definitions, we may specifically take $S$ to be $T(2(p^{n-1}-1)\rho)$ or $\St_{n-1}$ (replacing $\Tilt G$ and $I_n(G)$ with $\Tilt\widetilde G$ and $I_n(\widetilde G)$ if necessary).

\begin{lemma}~\label{LemStStable}
$\overline\cT_n$ is a Karoubi monoidal category, and $\overline I_n$ and $\overline J_n$ are thick tensor ideals in $\overline\cT_n$. Moreover, $X\otimes S\in J_n$ for $X\in\overline\cT_n$ and $S\in J_n\setminus I_n$, and every thick ideal of $\overline\cT_n$ strictly containing $\overline I_n$ also contains $\overline J_n$.
\end{lemma}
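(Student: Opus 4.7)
My plan is to prove the four assertions in the order (3), (1), (2), (4), since assertion (3) is the workhorse that makes the rest flow. For (3), the key idea is to combine Jantzen's characterization \cite[Lemma~E.8]{Jantzen} of $J_n$ as the tilting modules whose restriction to $G_{n-1}T$ is projective with the general fact (see \cite[Proposition~4.2.12]{EGNO}) that tensoring a projective $G_{n-1}T$-module with any representation stays projective. Since $S\in J_n\setminus I_n$ is projective over $G_{n-1}T$, and $X\otimes S$ is tilting by the definition of $\overline\cT_n$, it follows that $X\otimes S\in J_n$.

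Moving to (1), closure under direct sums, summands, and containing $\unit$ (via $\unit\otimes S=S$) is routine, so the main obstacle is closure under $\otimes$. Given $X,Y\in\overline\cT_n$, I would apply (3) to put $Y\otimes S\in J_n$, and then invoke Donkin's tensor product theorem to decompose $Y\otimes S$ into indecomposable summands of the form $T(\nu)\otimes T(\mu)^{(n-1)}$ with $\nu\in(p^{n-1}-1)\rho+\Lambda_{n-1}$ and $\mu\in\Lambda^+$. In particular $T(\nu)\in J_n\setminus I_n$, so $X\otimes T(\nu)$ is tilting by the defining property of $\overline\cT_n$, and further tensoring with the tilting module $T(\mu)^{(n-1)}$ keeps it tilting, so $X\otimes Y\otimes S$ is tilting. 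The essential trick is that absorbing one copy of $S$ into $Y$ lets the defining property of $\overline\cT_n$ be applied summand-by-summand.

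The same decomposition strategy handles (2). Closure of both $\overline I_n$ and $\overline J_n$ under sums and summands is immediate. For the ideal property of $\overline I_n$: given $X\in\overline I_n$ and $Y\in\overline\cT_n$, the decomposition above reduces the problem to checking $X\otimes T(\nu)\otimes T(\mu)^{(n-1)}\in I_n$, which follows from $X\otimes T(\nu)\in I_n$ (by definition of $\overline I_n$, since $T(\nu)\in J_n\setminus I_n$) together with the fact that $I_n$ is a thick tensor ideal in $\Tilt G$. For $\overline J_n$: writing $X=X_1\oplus X_2$ with $X_1$'s summands in $J_n$ and $X_2$'s in $\overline I_n$, the preceding case handles $X_2\otimes Y$, while $X_1\otimes Y$ lies in $J_n$ by (3) plus Donkin applied to each summand of $X_1$.

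For (4), suppose $K\supsetneq\overline I_n$ is a thick ideal in $\overline\cT_n$ and pick $V\in K\setminus\overline I_n$. Then $V\otimes S\in J_n$ by (3), but $V\otimes S\notin I_n$, so some indecomposable summand $T$ of $V\otimes S$ lies in $J_n\setminus I_n$. Lemma~\ref{LemThickIdeal} then implies $T$ generates $J_n$ as a thick ideal of $\Tilt G$, so every $Y\in J_n$ appears as a summand of $V\otimes S\otimes Z\cong V\otimes(S\otimes Z)$ for some $Z\in J_n\subset\overline\cT_n$. Since $V\in K$ and $K$ is an ideal, we get $J_n\subset K$, and combined with $\overline I_n\subset K$ this gives $\overline J_n\subset K$ because every object in $\overline J_n$ splits into summands belonging to $J_n$ or $\overline I_n$.
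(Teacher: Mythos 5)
Your overall strategy is different from the paper's and partially sound, but it contains a genuine error. Your argument for the auxiliary claim $X\otimes S\in J_n$ (your item (3)), using the projectivity characterisation $T(\lambda)\in J_n\iff T(\lambda)|_{G_{n-1}T}$ is projective together with the fact that tensoring with a projective $G_{n-1}T$-module stays projective, is correct and is a legitimate alternative to what the paper actually does. The paper instead notes that $S$ is a direct summand of $S\otimes S^*\otimes S$ (via the triangle identities), so $X\otimes S$ is a summand of $(X\otimes S)\otimes S^*\otimes S\in J_n$; the two routes yield the same conclusion.

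The genuine gap is in your items (1) and (2), where you write that $T(\mu)^{(n-1)}$ is a tilting module and that tensoring with it preserves the property of being tilting (or of lying in $I_n$ ``because $I_n$ is a thick tensor ideal in $\Tilt G$''). A Frobenius twist $T(\mu)^{(n-1)}$ is \emph{not} a tilting module (e.g.\ for $\SL_2$, $T_1^{(1)}=L_p\neq T_p$), and it is not an object of $\Tilt G$, so the thick-ideal property of $I_n$ in $\Tilt G$ cannot be invoked directly. The conclusion is salvageable: one has to decompose $X\otimes T(\nu)$ again via Donkin into summands $T(\kappa')\otimes T(\sigma)^{(n-1)}$, merge the two Frobenius twists into $(T(\sigma)\otimes T(\mu))^{(n-1)}$, decompose $T(\sigma)\otimes T(\mu)$ into tilting summands, and apply Donkin once more — this is exactly what the paper does in the proof of Lemma~\ref{LemThickIdeal}. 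But as written, your step is an assertion about $T(\mu)^{(n-1)}$ that is false.

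For comparison, the paper's proof of this lemma avoids any use of Donkin's theorem or Frobenius twists: since any $S\in J_n\setminus I_n$ is a summand of $S\otimes S^*\otimes S$, the module $X\otimes Y\otimes S$ is a summand of $(X\otimes S)\otimes(Y\otimes S)\otimes S^*$, a tensor product of genuine tilting modules, which is tilting (and lies in $I_n$ if either factor does). This rigidity trick is both shorter and sidesteps the pitfall you fell into. Your item (4) is essentially the paper's argument and is fine, modulo noting explicitly that the indecomposable summand $T$ of $V\otimes S$ in $J_n\setminus I_n$ generates $J_n$ via Lemma~\ref{LemThickIdeal}, so that $Y$ is a summand of $V\otimes(S\otimes Z')$ with $S\otimes Z'\in J_n\subseteq\overline\cT_n$.
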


\begin{proof}
Clearly $\overline\cT_n$ is additive, and if $(X\oplus Y)\otimes S$ is a tilting module then $X\otimes S$ is a summand of a tilting module and hence tilting, so $\overline\cT_n$ is also Karoubi. Any module $S\in J_n\setminus I_n$ is a summand of $S\otimes S^*\otimes S$ via evaluation and coevaluation morphisms. So for $X,Y\in\overline\cT_n$ the module $X\otimes Y\otimes S$ is a summand of $(X\otimes S)\otimes(Y\otimes S)\otimes S^*$ and thus tilting, and if $X\otimes S$ or $Y\otimes S$ is in $\overline I_n$ then so is $X\otimes Y\otimes S$. Hence $\overline\cT_n$ is monoidal and $\overline I_n$ is a thick tensor ideal. Now, for $X\in\overline\cT_n$ and $S\in J_n$, $X\otimes S$ is a summand of $(X\otimes S)\otimes S^*\otimes S$ which is in $J_n$, so $J_n$ is a thick tensor ideal in $\overline\cT_n$ and thus $\overline J_n$ is also a thick tensor ideal. Finally, if $X\not\in\overline I_n$ and $S\in J_n\setminus I_n$, then $X\otimes S$ is in $J_n\setminus I_n$ and hence $X$ generates a thick ideal containing $J_n$ by Lemma~\ref{LemThickIdeal}.
\end{proof}

Let $\overline\cI_n(G)\coloneqq\overline I_n(G)^{\rm max}$ as a tensor ideal in $\cT_n(G)$. We will write $\overline\cI_n$ when $G$ is clear from context. Lemma~\ref{LemMorFactor} describes $\overline\cI_n(G)$ more explicitly.

\begin{lemma}\label{LemMorFactor}
If $f$ is a morphism in $\overline\cT_n$, then $f\in\overline\cI_n$ if and only if $f\otimes\id_S$ factors through an object in $I_n$ for some (or equivalently all) $S\in J_n\setminus I_n$.
\end{lemma}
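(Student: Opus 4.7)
The plan is to prove the two implications separately, interpreting $\overline\cI_n$ as $\overline I_n^{\max}$. The ``some $S$ vs.\ all $S$'' equivalence follows directly from the second part of Lemma~\ref{LemThickIdeal}: given $S, S' \in J_n \setminus I_n$, write $S'$ as a summand of $S \otimes Z$ for some $Z \in J_n$; tensoring any factorisation of $f \otimes \id_S$ through $Z_0 \in I_n$ with $\id_Z$ gives a factorisation of $(f \otimes \id_S) \otimes \id_Z$ through $Z_0 \otimes Z \in I_n$ (using thickness of $I_n$ in $\Tilt G$), and restricting to the summand $S'$ of $S \otimes Z$ yields the desired factorisation of $f \otimes \id_{S'}$.

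For $(\Leftarrow)$, let $\cJ$ denote the class of morphisms with the factorisation property. I would show $\cJ$ is a tensor ideal in $\overline\cT_n$ with $\Ob(\cJ) = \overline I_n$, so that the maximality of $\overline\cI_n = \overline I_n^{\max}$ gives $\cJ \subseteq \overline\cI_n$. The identification $\Ob(\cJ) = \overline I_n$ is immediate from the definitions, and closure under composition is routine. The critical step is closure under $\otimes \id_A$ for $A \in \overline\cT_n$: here I would use Lemma~\ref{LemStStable} to see $A \otimes S \in J_n$, then apply Lemma~\ref{LemThickIdeal} to realise $A \otimes S$ as a summand of $S \otimes Z$ for some $Z \in J_n$. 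The existing factorisation of $f \otimes \id_S$ through $Z_0 \in I_n$ tensors up to a factorisation of $(f \otimes \id_S) \otimes \id_Z$ through $Z_0 \otimes Z \in I_n$, and retracting onto the $A \otimes S$ summand gives the required factorisation of $f \otimes \id_A \otimes \id_S \cong f \otimes \id_{A \otimes S}$.

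For $(\Rightarrow)$, I would leverage the uniqueness from Proposition~\ref{PropUniqueIdeal}. For $f \in \overline\cI_n(X, Y)$ and $S \in J_n \setminus I_n$, the morphism $f \otimes \id_S$ lies in $\overline\cI_n$ and --- by Lemma~\ref{LemStStable} --- is a morphism in $\cT_n$, so it belongs to the tensor ideal $\overline\cI_n \cap \Mor(\cT_n)$ of $\cT_n$ whose object class is $\overline I_n \cap \Ob(\cT_n)$. I would then show this class equals $I_n$. Indecomposable summands of objects in $\overline I_n \cap \Ob(\cT_n)$ are either $\unit$ (excluded because $\unit \otimes S = S \notin I_n$) or lie in $J_n$; an indecomposable summand $X_1 \in J_n \setminus I_n$ would give $X_1 \in \overline I_n$ by thickness, whence $X_1 \otimes X_1^* \in I_n$ (since $X_1^* \in J_n \setminus I_n$), and then the zig-zag identity $\id_{X_1} = (\id_{X_1} \otimes \ev_{X_1}) \circ (\coev_{X_1} \otimes \id_{X_1})$ would factor $\id_{X_1}$ through $X_1 \otimes X_1^* \otimes X_1 \in I_n$, forcing $X_1 \in I_n$ by Proposition~\ref{PropUniqueIdeal} --- a contradiction. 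Proposition~\ref{PropUniqueIdeal} then identifies $\overline\cI_n \cap \Mor(\cT_n)$ with $\cI_n = I_n^{\min}$, so $f \otimes \id_S$ factors through an object in $I_n$.

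The main obstacle is the tensor-ideal closure of $\cJ$ under $\otimes \id_A$: a naive approach would produce a factorisation of $f \otimes \id_A \otimes \id_S$ through $Z_0 \otimes A$, but $A$ is not assumed tilting, so this object need not lie in $I_n$ (or even in $\Tilt G$). Using the cellular structure of $J_n \setminus I_n$ via Lemma~\ref{LemThickIdeal} to absorb $A$ into the ``free factor'' $Z \in J_n$ is the essential move that keeps the intermediate object inside $I_n$.
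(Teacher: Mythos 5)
Your proof is correct, and its overall architecture mirrors the paper's: both directions ultimately rest on Proposition~\ref{PropUniqueIdeal} (so that $I_n^{\min}=\cI_n=I_n^{\max}$ in $\cT_n$), with $(\Leftarrow)$ established by showing that the class of morphisms with the factorisation property is a tensor ideal whose object class is $\overline I_n$, hence contained in $\overline\cI_n=\overline I_n^{\max}$. Where you differ from the paper is in the handling of the details, and both differences are worth noting.

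For $(\Leftarrow)$, the paper simply asserts that $\cM$ is a tensor ideal in $\overline\cT_n$ without spelling out closure under $\otimes\id_A$. Your caution about the ``naive'' factorisation through $Z_0\otimes A$ is reasonable: since $A$ is not tilting, thickness of $I_n$ in $\Tilt G$ does not directly place $Z_0\otimes A$ in $I_n$ (one can show $Z_0\otimes A\in J_n\cap\overline I_n$, but pushing that into $I_n$ requires an extra step). Your detour via Lemma~\ref{LemThickIdeal} --- writing $A\otimes S$ as a summand of $S\otimes Z$ with $Z\in J_n$, factoring $f\otimes\id_{S\otimes Z}$ through $Z_0\otimes Z\in I_n$, and retracting onto the summand --- is a clean and correct way to fill the gap the paper leaves implicit.

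For $(\Rightarrow)$, the paper argues by contrapositive: if $f\otimes\id_S\notin I_n^{\min}=I_n^{\max}$, then any tensor ideal of $\cT_n$ containing $f\otimes\id_S$ has object class strictly larger than $I_n$, hence contains $J_n$ by Lemma~\ref{LemThickIdeal}, so $f$ generates $\id_Y$ for some $Y\in J_n\setminus I_n$ in $\overline\cT_n$ and therefore $f\notin\overline\cI_n$. You instead run the implication directly: $f\otimes\id_S$ lies in the tensor ideal $\overline\cI_n\cap\Mor(\cT_n)$ of $\cT_n$, whose object class you compute to be $\overline I_n\cap\Ob(\cT_n)=I_n$ via a zig-zag argument, so Proposition~\ref{PropUniqueIdeal} forces this ideal to equal $\cI_n=I_n^{\min}$. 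Both routes are sound. Yours is slightly longer but has the merit of isolating the useful auxiliary fact $\overline I_n\cap\Ob(\cT_n)=I_n$ --- a restricted form of Conjecture~\ref{ConWellBehaved}(a) that the paper uses implicitly in Proposition~\ref{PropWellBehaved} but never states as a lemma.
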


\begin{proof}
For $S,S'\in J_n\setminus I_n$, by Lemma~\ref{LemThickIdeal} there is a tilting module $X$ such that $S'$ is a summand of $X\otimes S$. If $f\otimes\id_S$ factors through an object in $I_n$, then so does $f\otimes\id_{S\otimes X}$ and hence so does $f\otimes\id_{S'}$, so the choice of $S$ is irrelevant. Let $\cM$ be the collection of all morphisms $f$ in $\overline\cT_n$ such that $f\otimes\id_S$ factors through an object in $I_n$ for some $S\in J_n\setminus I_n$. Then $\cM$ is a tensor ideal in $\overline\cT_n$. We have $\id_X\in\cM$ for an object $X$ if and only if $X\otimes S$ is a summand of an object in $I_n$ and hence itself is in $I_n$, thus $\Ob(\cM)=\overline I_n$ and $\cM\subseteq\overline\cI_n$. Now consider a morphism $f\not\in\cM$, meaning $f\otimes\id_S\not\in I_n^{\rm min}=\cI_n=I_n^{\rm max}$ by Proposition~\ref{PropUniqueIdeal}. This means that the thick tensor ideal correspond to any tensor ideal in $\cT_n$ containing $f\otimes\id_S$ is strictly bigger than $I_n$. Thus $f\otimes\id_S$ generates the identity morphism on some object in $J_n\setminus I_n$, and so $f$ also generates this morphism in $\overline\cT_n$, hence $f\not\in\overline\cI_n$. Thus we have shown $f\in\cM$ if and only if $f\in\overline\cI_n$ as required.
\end{proof}

\begin{theorem}\label{ThmFullEnv}
$\overline\cT_n(G)/\overline\cI_n(G)$ has abelian envelope $\Ver_{p^n}(G)$.
\end{theorem}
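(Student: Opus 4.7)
The plan is to apply Theorem~\ref{ThmAbEnv} to $\overline\cT_n/\overline\cI_n$ and identify the resulting envelope with $\Ver_{p^n}(G)$. First I would show the canonical pseudo-tensor functor $\iota\colon\cT_n/\cI_n\to\overline\cT_n/\overline\cI_n$ is fully faithful. This reduces to the identity $\overline\cI_n\cap\mathrm{Mor}(\cT_n)=\cI_n$: the associated thick ideal of the left-hand side is $\overline I_n\cap\Ob(\cT_n)$, which equals $I_n$ because any $X\in\cT_n$ with $X\otimes S\in I_n$ for some $S\in J_n\setminus I_n$ occurs as a direct summand of $X\otimes S\otimes S^*\in I_n$, and $I_n$ is thick by Lemma~\ref{LemThickIdeal}, so Proposition~\ref{PropUniqueIdeal} forces equality with $\cI_n$.

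Next I would verify that $\overline\cT_n/\overline\cI_n$ is separated with $J_n/I_n$ among its splitting objects. Separatedness follows from Lemma~\ref{LemMorFactor}: any morphism $f$ representing a non-zero class in $\overline\cT_n/\overline\cI_n$ satisfies $f\otimes\id_S\notin I_n^{\rm min}=\cI_n$ for $S\in J_n\setminus I_n$, and remains non-zero in $\overline\cT_n/\overline\cI_n$ by full faithfulness of $\iota$. For the splitting property of such an $S$, Lemma~\ref{LemStStable} places $f\otimes\id_S$ between objects of $J_n\subseteq\cT_n$, where $S$ was shown to be splitting in the proof of Theorem~\ref{ThmExistence}, and the split factorisation transports back through $\iota$. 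Theorem~\ref{ThmAbEnv} now yields an abelian envelope $\cC(\overline\cT_n/\overline\cI_n,S_0)$, where $S_0$ is the full splitting ideal, and the universal property of $\Ver_{p^n}(G)$ applied to $\cT_n/\cI_n\hookrightarrow\overline\cT_n/\overline\cI_n\to\cC(\overline\cT_n/\overline\cI_n,S_0)$ delivers a comparison tensor functor $\Phi\colon\Ver_{p^n}(G)\to\cC(\overline\cT_n/\overline\cI_n,S_0)$, fully faithful on the projective generators indexed by $J_n\setminus I_n$ since the relevant $\Hom$ spaces coincide in $\cT_n/\cI_n$ and $\overline\cT_n/\overline\cI_n$.

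The main obstacle will be showing $\Phi$ is an equivalence, which amounts to verifying that $S_0$ contributes no extra indecomposable splitting objects beyond the images of $J_n\setminus I_n$. The strategy is that for any splitting object $X\in\overline\cT_n$, we have $X\otimes Q\in J_n\subseteq\cT_n$ for $Q\in J_n\setminus I_n$, and combined with Karoubi closure of $\overline\cT_n$ (Lemma~\ref{LemStStable}) and projectivity of the images of $J_n\setminus I_n$ in $\Ver_{p^n}(G)$, this should force $X$ to be isomorphic in the envelope to a direct summand of an existing image of some object of $J_n$. Once verified, the coalgebras defining $\Ver_{p^n}(G)$ and $\cC(\overline\cT_n/\overline\cI_n,S_0)$ coincide and $\Phi$ becomes an equivalence.
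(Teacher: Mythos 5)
Your approach is genuinely different from the paper's. The paper observes directly that the nonzero indecomposables in $\overline J_n/\overline I_n$ coincide with those in $J_n/I_n$ and have the same Hom-spaces, so the two coalgebra constructions $\cC(-, -)$ literally agree; it then establishes that these objects are exactly the splitting objects of $\overline\cT_n/\overline\cI_n$ by applying Theorem~\ref{ThmCEOEnv} to $\cD G$ (where finiteness holds), passing to $\cD G\times T_2$ via $\dot\otimes\Rep T_2$, and restricting to the full subcategory for $G$. You instead try to build a comparison tensor functor $\Phi$ through the universal property and then argue it is an equivalence. This could in principle work, but as written it contains real gaps.

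The most concrete problem is in your verification of $\overline I_n\cap\Ob(\cT_n)=I_n$. You claim that $X\in\cT_n$ with $X\otimes S\in I_n$ "occurs as a direct summand of $X\otimes S\otimes S^*$." The canonical retraction $X\to X\otimes S\otimes S^*\to X$ obtained from $\coev_S$ and evaluation equals $\dim(S)\cdot\id_X$, and for $n\geq 2$ every $S\in J_n\setminus I_n$ has highest weight in $(p^{n-1}-1)\rho+\Lambda^+$, so $\dim(S)\equiv 0\pmod p$ and the retraction is zero. The summand claim therefore fails for $n\geq2$. (The zigzag trick used in Lemma~\ref{LemThickIdeal} splits $X\otimes Z'$ off $X\otimes X^*\otimes X\otimes Z'$ — with $X$ sandwiched in the middle — and does not prove what you need here.) A correct argument for this containment is possible but more delicate: if $X\in J_n\setminus I_n$ with $X\otimes S\in I_n$, then by Lemma~\ref{LemThickIdeal} $\St_{n-1}$ is a summand of $X\otimes Z$ for some $Z\in J_n$, whence $\St_{n-1}\otimes S$ is a summand of $(X\otimes S)\otimes Z\in I_n$; but $\St_{n-1}\otimes S^*$ has $T(2(p^{n-1}-1)\rho)\notin I_n$ as a summand by Proposition~\ref{PropTiltCover}, a contradiction (using $\St_{n-1}^*\cong\St_{n-1}$).

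Your splitting argument is also not a proof: the observation that $f\otimes\id_S$ is a morphism between objects of $J_n$ (hence between splitting objects of $\cT_n/\cI_n$) does not by itself make $f\otimes\id_S$ split — a morphism between splitting objects need not itself be split. Finally, your last paragraph, which is where the real content of the theorem lies (ruling out extra indecomposable splitting objects beyond $J_n\setminus I_n$), is explicitly a sketch ("this should force\ldots") and is left unverified. The paper handles precisely this issue by invoking Theorem~\ref{ThmCEOEnv} for $\cD G$, where finiteness of $J_n(\cD G)\setminus I_n(\cD G)$ makes the splitting objects identifiable, and then transporting along the $\dot\otimes$-decomposition. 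Without some such reduction, your $\Phi$-is-an-equivalence step is missing an essential ingredient.
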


\begin{proof}
The indecomposables in $\overline J_n/\overline I_n\subseteq\overline\cT_n/\overline\cI_n$ are the same as those in $J_n/I_n\subseteq\cT_n/\cI_n$, as are the morphisms between them. Thus $\cC(\cT/\cI_n,J_n/I_n)\simeq\cC(\overline\cT/\overline\cI_n,\overline J_n/\overline I_n)$, and we simply need to show that this is an abelian envelope. Just as in the proof of Theorem~\ref{ThmExistence}, $\overline\cT_n(\cD G)/\overline\cI_n(\cD G)$ has an abelian envelope by Theorem~\ref{ThmCEOEnv} and this implies that all objects in $\overline J_n(\cD G)$ are splitting objects in this category. Let $\cC=\overline\cT_n(\cD G\times T_2)/\overline\cI_n(\cD G\times T_2)$, which is equivalent to $(\overline\cT_n(\cD G)/\overline\cI_n(\cD G))\dot\otimes\Rep T_2$ where $\dot\otimes$ is defined in Section~\ref{SecTensorCats}. The objects in $\overline J_n(\cD G\times T_2)$ are splitting objects in $\cC$ and hence also a splitting object in $\overline\cT_n(G)/\overline\cI_n(G)$ which is a full subcategory of $\cC$. The result then follows from Theorem~\ref{ThmAbEnv}.
\end{proof}

We recall the conjecture [DFilt $\Rightarrow$] from \cite{dconj3} which states that $\St_{n-1}\otimes L(\lambda)$ is a tilting module (that is $L(\lambda)\in\overline\cT_n$) for $\lambda\in X_{n-1}(T)$. Similarly to Donkin's tensor product theorem, this conjecture is proven for $p\geq 2h-4$ in \cite{dconj3}, and the only known counterexamples have characteristic $p<h$.

\begin{prop}\label{PropSimps}
Suppose $L(\lambda)\in\overline\cT_n$ for all $\lambda\in X_{n-1}(T)$. Then $L(\lambda+p^{n-1}\mu)\in\overline\cT_n$ for all $\lambda\in\Lambda_{n-1}$ and $\mu\in A$ with $\lambda+p^{n-1}\mu\in X(T)$, and the images of these modules in $\Ver_{p^n}(G)$ are a complete set of representatives of isomorphism classes of simple modules in $\Ver_{p^n}(G)$. Consequently, we have a Steinberg tensor product theorem in $\Ver_{p^n}(G)$ as in Theorem~\ref{ThmMain}(\ref{ItemSimps}).
\end{prop}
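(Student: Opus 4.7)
The plan is to split the proof into three parts: (i) show each $L(\lambda+p^{n-1}\mu)$ lies in $\overline\cT_n$ with non-zero image in $\Ver_{p^n}(G)$; (ii) match these images with the simples via projective covers; and (iii) deduce the Steinberg tensor product formula by transport from $\Rep\widetilde G$.

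For (i), I apply the hypothesis in the cover $\widetilde G$, where it reads $L(\lambda)\in\overline\cT_n(\widetilde G)$ for all $\lambda\in\Lambda_{n-1}$. Since $\mu\in A\subseteq\overline A$ gives $L(\mu)=T(\mu)$ by linkage, Steinberg's tensor product theorem yields
\[
L(\lambda+p^{n-1}\mu)\otimes\St_{n-1}\;\cong\;\big(L(\lambda)\otimes\St_{n-1}\big)\otimes T(\mu)^{(n-1)},
\]
a tensor product of two tilting modules and hence tilting, so $L(\lambda+p^{n-1}\mu)\in\overline\cT_n$. Moreover, applying Donkin's tensor product theorem to the highest-weight summand shows $T\big((p^{n-1}-1)\rho+\lambda+p^{n-1}\mu\big)\in J_n\setminus I_n$ appears as a summand of the right-hand side, so $L(\lambda+p^{n-1}\mu)\notin\overline I_n$ and its image in $\Ver_{p^n}(G)$ is non-zero.

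For (ii), recall that $\Ver_{p^n}(G)$ is a finite tensor category with indecomposable projectives $F(T(\nu))$ for $\nu\in((p^{n-1}-1)\rho+\Lambda_{n-1}+p^{n-1}A)\cap X(T)$ by Theorem~\ref{ThmMain}(\ref{ItemProj}), and the map $(\lambda,\mu)\mapsto(2(p^{n-1}-1)\rho+w_0\lambda+p^{n-1}\mu,\mu)$ gives a bijection between the proposed-simples index set and the projectives index set (both intersected with $X(T)$, which is preserved under $w_0$ and under shifts by $2(p^{n-1}-1)\rho\in X(T)$). Tensoring the surjection $T(2(p^{n-1}-1)\rho+w_0\lambda)\twoheadrightarrow L(\lambda)$ from Lemma~\ref{LemTiltCover} with $T(\mu)^{(n-1)}=L(\mu)^{(n-1)}$ and applying Donkin plus Steinberg yields a surjection $T(\nu)\twoheadrightarrow L(\lambda+p^{n-1}\mu)$ in $\Rep\widetilde G$, whose image in $\Ver_{p^n}(G)$ is non-zero by Lemma~\ref{LemMorFactor} (tensoring with $\St_{n-1}$ produces a target containing $T((p^{n-1}-1)\rho+\lambda+p^{n-1}\mu)\in J_n\setminus I_n$ as a summand, so the map does not factor through $I_n$). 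The counting identity ``number of simples equals number of indecomposable projectives'' in a finite tensor category, together with pairwise distinctness of the proposed simples (their highest-weight tilting summands after $\otimes\,\St_{n-1}$ are already pairwise distinct), forces the images $F(L(\lambda+p^{n-1}\mu))$ to be exactly the simple tops of the projectives and hence a complete system of simples. For (iii), writing $\lambda=\lambda_0+p\lambda_1+\cdots+p^{n-1}\lambda_{n-1}$ with $\lambda_i\in\Lambda_1$ and $\lambda_{n-1}\in A$, each factor $L(p^i\lambda_i)=L(\lambda_i)^{(i)}$ is covered by (i) (take $\mu=0$ for $i<n-1$ and $\lambda=0$ for $i=n-1$), so lies in $\overline\cT_n$; Steinberg's tensor product theorem in $\Rep\widetilde G$ provides the isomorphism and functoriality of the envelope transports it to $\Ver_{p^n}(G)$.

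The main obstacle I anticipate is step (ii): ensuring that each $F(L(\lambda+p^{n-1}\mu))$ is genuinely simple in $\Ver_{p^n}(G)$, rather than merely a non-zero quotient of an indecomposable projective that could itself be reducible in the envelope. A direct proof would require controlling how $\overline\cI_n$ interacts with the radical filtration of $T(\nu)$ under tensor with Frobenius twists $L(\mu)^{(n-1)}$, which is delicate because the top of $M\otimes L(\mu)^{(n-1)}$ need not coincide with $(\text{top }M)\otimes L(\mu)^{(n-1)}$ in $\Rep\widetilde G$. The counting-plus-distinctness argument above is intended to sidestep this, but one must check carefully that no two proposed simples collapse to isomorphic objects in the envelope---which I would verify through the weight-space argument on tilting summands in (i).
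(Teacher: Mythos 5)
Your proof follows the same general blueprint as the paper (Steinberg in the cover $\widetilde G$, Donkin's theorem for membership in $\overline\cT_n$, Lemma~\ref{LemTiltCover} to tie to projectives, transport for the tensor product formula), but it has one concrete error in step (i) and a genuine gap in step (ii).

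In step (i) you write that $L(\lambda+p^{n-1}\mu)\otimes\St_{n-1}\cong\big(L(\lambda)\otimes\St_{n-1}\big)\otimes T(\mu)^{(n-1)}$ is ``a tensor product of two tilting modules and hence tilting.'' This is wrong: $T(\mu)^{(n-1)}$ is \emph{not} a tilting module for $n\geq 2$ and $\mu\neq 0$, since $L(p^{n-1}\mu)$ lies well outside the closure of the fundamental alcove. What makes the product tilting is exactly Donkin's tensor product theorem applied summand by summand: $L(\lambda)\otimes\St_{n-1}\in J_n(\widetilde G)$ decomposes into indecomposables $T(\nu)$ with $\nu\in(p^{n-1}-1)\rho+\Lambda_{n-1}+p^{n-1}\Lambda^+$, and each $T(\nu)\otimes T(\mu)^{(n-1)}\cong T(\nu+p^{n-1}\mu)$ by Donkin. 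You invoke Donkin only for ``the highest-weight summand,'' but you need it for every summand, and you should not invoke the false ``tilting $\otimes$ tilting'' shortcut at all.

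In step (ii), the counting argument you propose does not close the gap you yourself flag. Exhibiting $N$ pairwise non-isomorphic non-zero quotients of $N$ pairwise non-isomorphic indecomposable projectives does not force any of them to be simple; one of the $F(L(\lambda+p^{n-1}\mu))$ could have length $\geq 2$ and still be a non-zero quotient of its $P_i$, distinct from all the others, without disturbing the count. The paper instead uses the \emph{socle} statement of Lemma~\ref{LemTiltCover}: $L(\lambda)\otimes L(\mu)^{(n-1)}$ sits in the socle of $T(2(p^{n-1}-1)\rho+w_0\lambda+p^{n-1}\mu)$, so after applying the exact functor $F$ (Proposition~\ref{PropExactSeq}), $F(L(\lambda+p^{n-1}\mu))$ is a non-zero subobject of the indecomposable injective $F(T(\nu))$, which therefore is its injective hull; pairing this with the fact that the $T(\nu)$ run over all indecomposable projectives then accounts for all simple socles. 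This socle/injective-hull viewpoint is what your argument is missing; a top/projective-cover-plus-counting argument alone is not enough.
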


\begin{proof}
Moving to the cover $\widetilde G$, we have $L(\lambda+p^{n-1}\mu)\cong L(\lambda)\otimes L(\mu)^{(n-1)}$ by Steinberg's tensor product theorem. Since $L(\lambda)\otimes\St_{n-1}\in J_n(\widetilde G)$ by Lemma~\ref{LemStStable}, we can apply Donkin's theorem to each summand of $L(\lambda)\otimes\St_{n-1}$ tensored with $L(\mu)^{(n-1)}$ to show that $L(\lambda+p^{n-1}\mu)\otimes\St_{n-1}$ is a tilting module. By Lemma~\ref{LemTiltCover}, $L(\lambda)\otimes L(\mu)^{(n-1)}$ is in the socle of
\[T(2(p^{n-1}-1)\rho+w_0\lambda)\otimes L(\mu)^{(n-1)}=T(2(p^r-1)\rho+w_0\lambda+p^{n-1}\mu)\]
and hence this is the projective cover of $L(\lambda+p^{n-1}\mu)$ in $\Ver_{p^n}(G)$. Considering all values of $\lambda,\mu$ gives all indecomposable modules in $J_n\setminus I_n$ and thus all indecomposable projective modules in $\Ver_{p^n}(G)$, so we have found all of the simple modules. The tensor product theorem is an immediate consequence of the fact that the functor $\overline\cT_n\to\Ver_{p^n}(G)$ is monoidal.
\end{proof}


\begin{prop}\label{PropRestriction2}
If $\SL_2\to G$ is a principal morphism, then the corresponding functor $\Rep G\to\Rep\SL_2$ restricts to a functor $F:\overline\cT_n(G)\to\overline\cT_n(\SL_2)$, and $\overline\cI_n(G)=F^{-1}(\overline\cI_n(\SL_2))$. This commutes with the tensor functor $\Ver_{p^n}(G)\to\Ver_{p^n}$.
\end{prop}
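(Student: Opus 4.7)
The plan is to reduce both claims to the analogous statements in Proposition~\ref{PropRestriction1} by using $\St_{n-1}(G)$ (working over $\widetilde G$ if $\St_{n-1}\not\in\Rep G$, as permitted in Section~\ref{SecBarDef}) as a universal test object. The key input is that $F(\St_{n-1}(G))\in J_n(\SL_2)\setminus I_n(\SL_2)$; this was established inside the proof of Proposition~\ref{PropRestriction1} (using Lemma~\ref{LemStImage} for odd $p$, and the direct argument involving $(\SL_2)^m$ for $p=2$), so it is available.

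For well-definedness of $F\colon\overline\cT_n(G)\to\overline\cT_n(\SL_2)$, given $X\in\overline\cT_n(G)$ we have $X\otimes\St_{n-1}(G)\in\cT_n(G)$ by the definition in Section~\ref{SecBarDef}. Applying Proposition~\ref{PropRestriction1}, $F(X)\otimes F(\St_{n-1}(G))=F(X\otimes\St_{n-1}(G))\in\cT_n(\SL_2)\subseteq\Tilt\SL_2$, and since $F(\St_{n-1}(G))\in J_n(\SL_2)\setminus I_n(\SL_2)$ this exhibits $F(X)\in\overline\cT_n(\SL_2)$. The same reasoning shows $F^{-1}(\overline I_n(\SL_2))=\overline I_n(G)$ at the level of objects.

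For the identification $\overline\cI_n(G)=F^{-1}(\overline\cI_n(\SL_2))$ at the level of morphisms, I would apply Lemma~\ref{LemMorFactor} on both sides with the specific choice $S=\St_{n-1}(G)$ over $G$ and $S'=F(\St_{n-1}(G))$ over $\SL_2$. A morphism $f$ in $\overline\cT_n(G)$ lies in $\overline\cI_n(G)$ if and only if $f\otimes\id_{\St_{n-1}(G)}$ factors through an object of $I_n(G)$, i.e.\ lies in $\cI_n(G)$; by Proposition~\ref{PropRestriction1} this is equivalent to $F(f)\otimes\id_{F(\St_{n-1}(G))}\in\cI_n(\SL_2)$, and a second application of Lemma~\ref{LemMorFactor} translates this back to $F(f)\in\overline\cI_n(\SL_2)$. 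Thus $F$ descends to a pseudo-tensor functor $\overline\cT_n(G)/\overline\cI_n(G)\to\overline\cT_n(\SL_2)/\overline\cI_n(\SL_2)$.

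Finally, for commutativity with the tensor functor $\Ver_{p^n}(G)\to\Ver_{p^n}$, I would use Theorem~\ref{ThmFullEnv}: the composition $\overline\cT_n(G)\to\overline\cT_n(\SL_2)/\overline\cI_n(\SL_2)\hookrightarrow\Ver_{p^n}$ vanishes on $\overline\cI_n(G)$ by the previous paragraph, so the universal property of the abelian envelope of $\overline\cT_n(G)/\overline\cI_n(G)$ yields a unique tensor functor $\Ver_{p^n}(G)\to\Ver_{p^n}$ making the square commute. Restricting to the subcategory $\cT_n(G)\subseteq\overline\cT_n(G)$ and invoking the universal property for the abelian envelope of $\cT_n(G)/\cI_n(G)$ (Theorem~\ref{ThmExistence}), this functor necessarily coincides with the one produced in Proposition~\ref{PropRestriction1}. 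The main obstacle is essentially bookkeeping: one must consistently pass to $\widetilde G$ when $\St_{n-1}$ is not defined over $G$ (relevant only to $p=2$), but the definitions in Section~\ref{SecBarDef} and the cover arguments in Section~\ref{SecPrincipal} absorb this without affecting the equivalences above.
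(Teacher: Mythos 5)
Your proof is correct and follows essentially the same approach as the paper: well-definedness on objects via Proposition~\ref{PropRestriction1} applied to $X\otimes S$ for $S\in J_n\setminus I_n$, and the morphism-level equality via Lemma~\ref{LemMorFactor} combined with Proposition~\ref{PropRestriction1}. The only cosmetic difference is that you organize the morphism-level identification as a single chain of equivalences (using the factoring characterization from Proposition~\ref{PropUniqueIdeal} to pass through $\cI_n$), whereas the paper proves the two inclusions separately (one via maximality of $\overline\cI_n(G)$, the other via Lemma~\ref{LemMorFactor}); you also spell out the commutativity of the induced diagram via the universal property of the abelian envelope, which the paper leaves implicit.
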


\begin{proof}
Let $S\in J_n(G)\setminus I_n(G)$ and $X\in\overline\cT_n(G)$. By Proposition~\ref{PropRestriction1}, $F(S)\in J_n(\SL_2)\setminus I_n(\SL_2)$ and $F(X)\otimes F(S)\cong F(X\otimes S)\in\cT_n(\SL_2)$, hence $F(X)\in\overline\cT_n(\SL_2)$. Moreover $X\otimes S\in I_n(G)$ if and only if $F(X\otimes S)\in I_n(\SL_2)$, so $F^{-1}(\overline I_n(\SL_2))=\overline I_n(G)$ and $F^{-1}(\overline\cI_n(\SL_2))\subseteq\overline\cI_n(G)$. Now let $f$ be a morphism in $\overline\cI_n(G)$, so $f\otimes\id_S$ factors through an object in $I_n(G)$ by Lemma~\ref{LemMorFactor}. Thus $F(f)\otimes F(\id_S)$ factors through an object in $I_n(\SL_2)$, so by Lemma~\ref{LemMorFactor} again we have $F(f)\in\overline\cI_n(\SL_2)$ giving the other inclusion $\overline\cI_n(G)\subseteq F^{-1}(\overline\cI_n(\SL_2))$.
\end{proof}

Although $I_n(G)$ is a thick ideal in $\Tilt G$, we have primarily been considering it as a thick ideal in $\cT_n(G)$. This is because the correct thick ideal to consider in $\Tilt G$ in order to obtain $\Ver_{p^n}(G)$ is actually $\overline I_n(G)\cap\Ob(\Tilt G)$, as shown in the following proposition. A priori, this may be strictly larger than $I_n(G)$, however we conjecture below that these ideals are equal.

\begin{prop}\label{PropWellBehaved}
Let $F:\Tilt G\to\Tilt\SL_2$ be restriction to a principal $\SL_2$. We have
\[F^{-1}(\cI_n(\SL_2))=(F^{-1}(I_n(\SL_2)))^{\rm max}=\overline\cI_n(G)\cap\Mor(\Tilt G)=(\overline I_n(G)\cap\Ob(\Tilt G))^{\rm max}\]
and the abelian envelope of $\Tilt G/F^{-1}(\cI_n(\SL_2))$ is $\Ver_{p^n}(G)$.
\end{prop}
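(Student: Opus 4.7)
The plan is to reduce all four tensor ideals to a common description by transferring the problem to $\SL_2$ via Proposition~\ref{PropRestriction2}, and then to invoke Theorem~\ref{ThmFullEnv} to identify the abelian envelope. The decisive fact is that thick tensor ideals in $\Tilt\SL_2$ form the totally ordered chain $\Ob(\Tilt\SL_2)\supsetneq I_1(\SL_2)\supsetneq I_2(\SL_2)\supsetneq\cdots$, and the uniqueness recorded in Section~\ref{SecSL2Tensor} upgrades this to a bijection between thick ideals and tensor ideals in $\Tilt\SL_2$.

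The main step is to establish $\overline I_n(\SL_2)\cap\Ob(\Tilt\SL_2)=I_n(\SL_2)$ as thick ideals in $\Tilt\SL_2$. The inclusion $I_n(\SL_2)\subseteq\overline I_n(\SL_2)$ is immediate since $I_n(\SL_2)$ is a thick tensor ideal in $\Tilt\SL_2$ and $\St_{n-1}$ is tilting, so $T_\lambda\otimes\St_{n-1}\in I_n(\SL_2)$ whenever $T_\lambda\in I_n(\SL_2)$. For the reverse, the chain structure reduces matters to producing a tilting module in $I_{n-1}(\SL_2)\setminus\overline I_n(\SL_2)$, and $\St_{n-1}$ itself works: $\St_{n-1}^{\otimes 2}$ has top weight $2p^{n-1}-2<p^n-1$, so no summand lies in $I_n(\SL_2)$ and therefore $\St_{n-1}\notin\overline I_n(\SL_2)$. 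By the bijection above, $\overline\cI_n(\SL_2)\cap\Mor(\Tilt\SL_2)$ is the unique tensor ideal in $\Tilt\SL_2$ with object class $I_n(\SL_2)$, namely $\cI_n(\SL_2)$. Pulling back along $F$ via Proposition~\ref{PropRestriction2} then yields $F^{-1}(\cI_n(\SL_2))=\overline\cI_n(G)\cap\Mor(\Tilt G)$.

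The remaining two identities in the string are now formal. For $F^{-1}(\cI_n(\SL_2))=(F^{-1}(I_n(\SL_2)))^{\rm max}$, the forward inclusion is immediate from the object class of $F^{-1}(\cI_n(\SL_2))$; the reverse follows because for any tensor ideal $\cJ\subseteq\Tilt G$ with $\Ob(\cJ)\subseteq F^{-1}(I_n(\SL_2))$, the tensor ideal generated by $F(\cJ)$ in $\Tilt\SL_2$ has object class inside $I_n(\SL_2)$ and is therefore contained in $\cI_n(\SL_2)$ by uniqueness. For $\overline\cI_n(G)\cap\Mor(\Tilt G)=(\overline I_n(G)\cap\Ob(\Tilt G))^{\rm max}$, Lemma~\ref{LemMorFactor} applied with $S=\St_{n-1}$ handles the reverse direction: any morphism $f\colon X\to Y$ in a tensor ideal with $X,Y\in\overline I_n(G)$ satisfies $f\otimes\id_{\St_{n-1}}$ factoring trivially through its source $X\otimes\St_{n-1}\in I_n(G)$.

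For the abelian envelope statement, the identification just proved exhibits $\Tilt G/F^{-1}(\cI_n(\SL_2))$ as a full pseudo-tensor subcategory of $\overline\cT_n(G)/\overline\cI_n(G)$. The splitting objects of the ambient quotient, which by the proof of Theorem~\ref{ThmFullEnv} come from indecomposables in $J_n\setminus I_n$, are all tilting modules and hence already live in the smaller quotient with identical Hom spaces. Separatedness therefore transfers from $\overline\cT_n(G)/\overline\cI_n(G)$, and Theorem~\ref{ThmAbEnv} produces the same coalgebra construction $\Ver_{p^n}(G)$ as the abelian envelope of $\Tilt G/F^{-1}(\cI_n(\SL_2))$. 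The main obstacle is the separation witness in the second step: once $\St_{n-1}$ is identified and the top-weight bound $2p^{n-1}-2<p^n-1$ is observed, the rest of the argument proceeds by routine bookkeeping and appeals to universal properties.
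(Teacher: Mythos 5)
Your proof of the equality $F^{-1}(\cI_n(\SL_2))=\overline\cI_n(G)\cap\Mor(\Tilt G)$ is correct and follows the same route as the paper: establish $\overline I_n(\SL_2)\cap\Ob(\Tilt\SL_2)=I_n(\SL_2)$ using the chain of thick ideals (your observation that $\St_{n-1}\notin\overline I_n(\SL_2)$, since $\St_{n-1}^{\otimes2}$ has top weight $2p^{n-1}-2<p^n-1$, is a nice explicit witness), conclude $\overline\cI_n(\SL_2)\cap\Mor(\Tilt\SL_2)=\cI_n(\SL_2)$ from the uniqueness of tensor ideals over thick ideals in $\Tilt\SL_2$, and pull back via Proposition~\ref{PropRestriction2}. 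The abelian envelope argument via splitting objects in $\overline\cT_n(G)/\overline\cI_n(G)$ and Theorem~\ref{ThmAbEnv} is also sound.

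However, both of your arguments for the reverse inclusion into the $\mathrm{max}$ ideals have genuine gaps. For the first, you assert that if $\cJ\subseteq\Tilt G$ is a tensor ideal with $\Ob(\cJ)\subseteq F^{-1}(I_n(\SL_2))$, then the tensor ideal $\cK$ generated by $F(\cJ)$ in $\Tilt\SL_2$ satisfies $\Ob(\cK)\subseteq I_n(\SL_2)$. This is not a formal consequence of anything established: $\Ob(\cK)$ is determined by which indecomposables $T\in\Tilt\SL_2$ admit a factorisation $\id_T=h\circ(F(f)\otimes\id_Z)\circ g$ with $f\in\cJ$, and there is no a priori control on such $T$ just from the assumption on $\Ob(\cJ)$. (In fact this claim is equivalent, after unwinding, to the very conclusion you are trying to prove.) For the second, you only treat morphisms $f\colon X\to Y$ with $X,Y\in\overline I_n(G)$, but such morphisms all lie in $(\overline I_n(G)\cap\Ob(\Tilt G))^{\rm min}$, whereas the statement concerns $(\overline I_n(G)\cap\Ob(\Tilt G))^{\rm max}$, which may contain morphisms between objects outside the ideal. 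The paper closes this gap differently: it shows directly that if $f\notin\overline\cI_n(G)\cap\Mor(\Tilt G)$ then $f\notin\Ob(\cM)^{\rm max}$, by using Lemma~\ref{LemMorFactor} to conclude $f\otimes\id_S$ does not factor through $I_n(G)$, then invoking Proposition~\ref{PropUniqueIdeal} (the nontrivial fact that $I_n^{\rm min}=I_n^{\rm max}$ in $\cT_n(G)$, which itself relies on Proposition~\ref{PropTiltCover}) to see $f\otimes\id_S\notin\cI_n(G)$, so $f$ generates objects in $J_n\setminus I_n$ and hence outside $\Ob(\cM)$. Your proof bypasses Proposition~\ref{PropUniqueIdeal} entirely, which is a sign something essential is missing.
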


\begin{proof}
Let $\cM=\overline\cI_n(G)\cap\Mor(\Tilt G)$. The functor $H:\overline\cT_n(G)\to\overline\cT_n(\SL_2)$ restricts to $F$, so
\begin{align*}
\Ob(\cM)=H^{-1}(\overline I_n(\SL_2))\cap\Ob(\Tilt G)&=F^{-1}(\overline I_n(\SL_2)\cap\Ob(\Tilt\SL_2))=F^{-1}(I_n(\SL_2)),\\
\cM=H^{-1}(\overline\cI_n(\SL_2))\cap\Mor(\Tilt G)&=F^{-1}(\overline\cI_n(\SL_2)\cap\Mor(\Tilt\SL_2))=F^{-1}(\cI_n(\SL_2))
\end{align*}
by Proposition~\ref{PropRestriction2} and the classification of tensor ideals in $\Tilt\SL_2$. It now suffices to show that $\Ob(\cM)^{\rm max}\subseteq\cM$. If $f\in\Mor(\Tilt G)$ with $f\not\in\cM$, then $f\otimes\id_S$ does not factor through an object in $I_n(G)$ for any $S\in J_n\setminus I_n$ by Lemma~\ref{LemMorFactor}. This means that $f\otimes\id_S\not\in\cI_n(G)$ by Proposition~\ref{PropUniqueIdeal} and thus $f$ generates an object in $\cT_n(G)$ that is not in $\Ob(\cM)$, so $f\not\in\Ob(\cM)^{\rm max}$ as required. For the abelian envelope, we have an inclusion $\Tilt G/\cM\to\overline\cT_n(G)/\overline\cI_n(G)$. Objects in $J_n(G)$ are splitting in the second category by the proof of Theorem~\ref{ThmFullEnv}, so they are also splitting in $\Tilt G/\cM$ and we can apply Theorem~\ref{ThmAbEnv}.
\end{proof}

This shows that the definition of $\Ver_{p^n}(G)$ in Theorem~\ref{ThmMain} is equivalent to the definition in Section~\ref{SecDefinition}. Property (\ref{ItemProj}) in that theorem follows from the construction, while (\ref{ItemAbEnv}) is Theorem~\ref{ThmFullEnv}, (\ref{ItemSimps}) is Proposition~\ref{PropSimps}, and (\ref{ItemPrin}) is Proposition~\ref{PropRestriction2}.

In addition to $I_n$ and $\overline I_n\cap\Ob(\Tilt G)$, we recall another family of ideals in $\Tilt G$ defined in \cite{HW}: the tensor ideals $\cN_k$ and thick tensor ideals $N_k$ for $k\in\mN_{>0}$ generalise the ideals of negligible morphisms and objects ($\cN_1=\cI_1$ and $N_1=I_1$). By \cite[Corollary~7.13, Proposition~8.7]{HW} and Lemma~\ref{LemThickIdeal}, the restriction of $N_{(n-1)|\Phi^+|+1}$ to $\cT_n$ is equal to $I_n$, and then by Proposition~\ref{PropUniqueIdeal} the restriction of $\cN_{(n-1)|\Phi^+|+1}$ to $\cT_n$ is equal to $\cI_n$. A natural question is whether these ideals are also equal in $\Tilt G$, which we conjecture to be true:

\begin{conjecture}\ \label{ConWellBehaved}
For any $G$ and $p$:
\begin{enumerate}
\item[(a)] $I_n(G)=\overline I_n(G)\cap\Ob(\Tilt G)$ (so $\cI_n(G)=\overline\cI_n(G)\cap\Mor(\Tilt G)$ by Proposition~\ref{PropWellBehaved});
\item[(b)] $I_n(G)=N_{(n-1)|\Phi^+|+1}$ in $\Tilt G$;
\item[(c)] $\cI_n(G)=\cN_{(n-1)|\Phi^+|+1}$ in $\Tilt G$.
\end{enumerate}
\end{conjecture}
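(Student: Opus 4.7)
Since the statement is a conjecture, the paper offers no proof; what follows is a tentative strategy. The natural entry point is part (a). By Proposition~\ref{PropWellBehaved}, $\overline{I}_n(G)\cap\Ob(\Tilt G)=F^{-1}(I_n(\SL_2))$, where $F:\Tilt G\to\Tilt\SL_2$ is restriction along a principal morphism, so (a) is equivalent to $F^{-1}(I_n(\SL_2))\subseteq I_n(G)$---the converse of an inclusion already known from Proposition~\ref{PropRestriction1}. Reducing as usual to $G=\widetilde{\cD G}$ semisimple and simply-connected, the task becomes: if every tilting summand of $F(T(\lambda))$ has $\SL_2$-highest weight $\geq p^n-1$, then $\lambda\in(p^{n-1}-1)\rho+\Lambda_{n-1}+p^{n-1}(\Lambda^+\setminus A)$.

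The plan is a character argument extending Lemmas~\ref{LemCharacters} and \ref{LemStImage}. Writing $\ch(T(\lambda))=\sum_\mu a_\mu\chi(\mu)$ via a Weyl filtration with $a_\mu\in\mN$, the restriction $\ch(F(T(\lambda)))$ becomes a sum of ratios $g(x^{\phi^*(\mu+\rho)})/g(x)$ with $g$ as in the proof of Lemma~\ref{LemStImage}. Evaluating at primitive $p^k$-th roots of unity for successive $k=1,\dots,n$ yields constraints on the $a_\mu$; the goal is to produce non-vanishing at an appropriate root whenever $\lambda$ lies outside the target set, contradicting Lemma~\ref{LemCharacters}. An alternative is induction on $n$, treating $\lambda$ of Donkin form separately via the Frobenius twist (using Donkin's tensor product theorem) and handling the remainder by translation functors applied to the known $n=1$ case from \cite{GK}.

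Parts (b) and (c) should then follow from (a) combined with the Hazi--Williamson cell structure. The cited equalities $N_{(n-1)|\Phi^+|+1}=I_n$ and $\cN_{(n-1)|\Phi^+|+1}=\cI_n$ within $\cT_n$ must be extended to all of $\Tilt G$, which amounts to identifying the Lusztig two-sided cell containing $T(\lambda)$ for $\lambda$ outside $(p^{n-1}-1)\rho+\Lambda^+$ and checking that such cells sit below $I_n$ in the cell order. The cell-theoretic interpretation of the alcove decomposition developed in \cite{AnCells} should supply this. Part (c) would then follow from (b) by a uniqueness-of-ideal argument modelled on Proposition~\ref{PropUniqueIdeal}, using the socle computation of Proposition~\ref{PropTiltCover} to show that all object-equivalent tensor ideals collapse.

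The main obstacle is the character computation in (a): unlike Weyl characters, indecomposable tilting characters of $G$ in characteristic $p$ are governed by $p$-Kazhdan--Lusztig polynomials and admit no closed form outside Donkin product form, so cancellation in $\sum_\mu a_\mu\chi(\mu)$ at roots of unity is hard to control directly. This is presumably why the statement remains conjectural. A potentially more conceptual route would establish (b) first, directly from Hazi--Williamson cell theory, and then deduce (a) from it, bypassing explicit character manipulations entirely---essentially reversing the logical order above.
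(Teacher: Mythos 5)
The statement is a conjecture, so the paper offers no proof and there is nothing to compare against directly; what can be assessed is whether your proposed strategy is sound. Your recognition of this and your reduction of part (a) to an $\SL_2$-restriction question via Proposition~\ref{PropWellBehaved} are both correct: indeed $\overline I_n(G)\cap\Ob(\Tilt G)=F^{-1}(I_n(\SL_2))$, the inclusion $I_n(G)\subseteq F^{-1}(I_n(\SL_2))$ is already known from the proof of Proposition~\ref{PropRestriction1}, and the conjectural content of (a) is precisely the converse for tilting modules outside $\cT_n(G)$.

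However, the character-theoretic core of your plan for (a) has a concrete gap. You want to conclude $F(T(\lambda))\notin I_n(\SL_2)$ by exhibiting $\ch_{F(T(\lambda))}(\omega)\neq 0$ for $\omega$ a primitive $p^n$-th root of unity, using Lemma~\ref{LemCharacters}. This is valid as a one-way implication, but it is not guaranteed to be achievable even when the conjecture is true: the quantities $\ch_{T_a}(\omega)$ for $a<p^n-1$ do \emph{not} share a sign, so the character of $F(T(\lambda))$ can vanish by cancellation among non-negligible summands. For instance with $p=3$, $n=2$, $\omega=e^{2\pi i/9}$, one computes $\ch_{T_4}(\omega)>0$ while $\ch_{T_5}(\omega)<0$, and varying $\omega$ over other primitive $9$th roots simply moves the sign change elsewhere (e.g.\ at $\omega=e^{4\pi i/9}$ one finds $\ch_{T_3}<0<\ch_{T_4}$). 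Thus an $\SL_2$-restriction with only "good" summands can still have vanishing character at every primitive $p^n$-th root, and a single-root evaluation cannot certify non-membership in $I_n(\SL_2)$. Your hedge of evaluating at roots of all orders $p^k$, $k\leq n$, partially addresses this, but one would still need a positivity/linear-independence argument to rule out simultaneous vanishing, and you have not supplied one. A separate minor inaccuracy: the restriction of the Weyl character $\chi(\mu)$ to $\SL_2$ is $\bigl(\sum_{w\in W}\varepsilon(w)x^{\phi^*(w(\mu+\rho))}\bigr)/g(x)$, which is \emph{not} of the form $g(x^{\phi^*(\mu+\rho)})/g(x)$ for general $\mu$; that identity holds for the Steinberg module precisely because $\mu+\rho=p^n\rho$ is a scalar multiple of $\rho$.

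For parts (b) and (c), your claim that they "should then follow from (a) combined with the Hazi--Williamson cell structure" is stated with more confidence than the logical situation warrants. What is known (and cited in the paper) is that $N_{(n-1)|\Phi^+|+1}$ restricted to $\cT_n$ equals $I_n$; extending this to all of $\Tilt G$ requires knowing that $N_{(n-1)|\Phi^+|+1}\subseteq\cT_n(G)$, i.e.\ that no $T(\lambda)$ with $\lambda$ below $(p^{n-1}-1)\rho+\Lambda^+$ lies in $N_{(n-1)|\Phi^+|+1}$, and this is not a formal consequence of (a); the paper instead lists (a), (b), (c) as three parallel open statements, giving two explicit reformulations of (a) alone. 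Your closing suggestion of proving (b) first via cell theory is equally plausible but equally unjustified at present. None of this is fatal, since the statement is a conjecture and you are proposing a direction of attack, but the review should flag that both the sign/cancellation issue in (a) and the implications among (a), (b), (c) are genuine obstacles rather than routine steps.
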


Equivalent statements to (a) are that $\St_{n-1}\otimes T(\lambda)\not\in I_n(\widetilde G)$ for any indecomposable tilting $T(\lambda)\not\in I_n(G)$, or that every thick tensor ideal in $\Tilt G$ containing $I_n(G)$ also contains $J_n(G)$. All three statements in the conjecture hold for $\SL_2$ with $p\geq2$, and $\SL_3$ with $p\geq3$, by the classification of thick tensor ideals described in Example~\ref{ExSL2SL3}. For $\SL_3$ we also have the following result regarding tensor ideals:

\begin{prop}
For $p\geq3$, there is exactly 1 tensor ideal in $\Tilt\SL_3$ corresponding to the thick tensor ideal $I_2(\SL_3)$.
\end{prop}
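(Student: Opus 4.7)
The plan is to adapt Proposition~\ref{PropUniqueIdeal}, exploiting that $\St_1 = L(2\rho)$ lies in $\Tilt\SL_3$ (since $2\rho \in X(T)$ for $\SL_3$) to reduce from $\Tilt\SL_3$ to $\cT_2$, where uniqueness has already been established. It suffices to show $I_2^{\max}(\Tilt\SL_3) \subseteq I_2^{\min}(\Tilt\SL_3)$, the reverse inclusion being automatic.

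The first step will be to observe that $I_2^{\max}(\Tilt\SL_3) \cap \Mor(\cT_2)$ is a tensor ideal in $\cT_2$, with closure under composition with morphisms in $\cT_2$ and under tensoring with objects of $\cT_2$ inherited from $I_2^{\max}(\Tilt\SL_3)$. Its object class is $I_2 \cap \Ob(\cT_2) = I_2$, so by Proposition~\ref{PropUniqueIdeal} this restriction must equal $\cI_2 = I_2^{\min}(\cT_2)$. Then, given any $f\colon X \to Y$ in $I_2^{\max}(\Tilt\SL_3)$, I will consider the morphism $f \otimes \id_{\St_1}\colon X \otimes \St_1 \to Y \otimes \St_1$, whose source and target lie in $J_2 \subseteq \cT_2$ by the thick-ideal property of $J_2$ (Lemma~\ref{LemThickIdeal}); hence it belongs to the restriction above and therefore factors through some $Z \in I_2$.

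The third step uses rigidity: tensoring this factorisation by $\id_{\St_1^*}$ and composing with the appropriate (co)evaluation morphisms for $\St_1$ exhibits $f$ itself as factoring through $Z \otimes \St_1^*$, which lies in $I_2$ by the thick-ideal property (Lemma~\ref{LemThickIdeal}). This gives $f \in I_2^{\min}(\Tilt\SL_3)$, as required.

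The only potentially subtle point is the identification of the object class of the restriction in the first step, but this is really immediate from the definition of $I_2^{\max}$, so I do not anticipate serious obstacles; indeed the argument should extend verbatim to any $G$ satisfying the paper's hypotheses for which $\St_{n-1} \in \Tilt G$.
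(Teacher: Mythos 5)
Your Steps 1 and 2 are correct, but Step 3 contains a fatal gap: the rigidity cancellation you invoke does not exist.

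Suppose $f\otimes\id_{\St_1}$ factors through $Z\in I_2$, say $f\otimes\id_{\St_1}=\beta\circ\alpha$. Tensoring with $\id_{\St_1^*}$ and composing with the (co)evaluations, as you propose, produces the morphism
\[(\id_Y\otimes\ev'_{\St_1})\circ(\beta\otimes\id_{\St_1^*})\circ(\alpha\otimes\id_{\St_1^*})\circ(\id_X\otimes\coev_{\St_1}),\]
which does factor through $Z\otimes\St_1^*$ as claimed. However, by the interchange law this morphism equals $(\id_Y\otimes(\ev'_{\St_1}\circ\coev_{\St_1}))\circ f=(\dim\St_1)\cdot f$, where $\dim\St_1=p^{|\Phi^+|}$ is the categorical dimension. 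For $\SL_3$ this is $p^3\equiv 0$ in $\bk$, so you have exhibited $0$, not $f$, as factoring through $Z\otimes\St_1^*$. There is no identity in a rigid symmetric monoidal category that ``cancels'' $\otimes V$ from $f\otimes\id_V$ without introducing the scalar $\dim V$; the snake identities only recover $\id_V$ or $\id_{V^*}$, never $\id_\unit$. Since $\St_{n-1}$ always has dimension divisible by $p$, this obstruction is unavoidable in your approach.

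What your Steps 1--2 actually prove, via Lemma~\ref{LemMorFactor}, is that $f\in\overline\cI_2(\SL_3)\cap\Mor(\Tilt\SL_3)$, which is the weaker containment already established in Proposition~\ref{PropWellBehaved}; it does \emph{not} give $f\in I_2^{\min}(\Tilt\SL_3)$. This is exactly why the paper's proof has to work harder: instead of attempting to cancel $\St_1$, it passes to the adjoint morphism $g:\unit\to Y\otimes X^*$ (which \emph{can} be recovered from $f$ loss-free by the snake identity), and then must classify the indecomposable tilting modules outside $J_2$ that have $\unit$ in their socle, namely $\unit$ and $T((3p-3)\varpi_i)$, using the comparison with $U_q(\mathfrak{sl}_3)$ and an explicit splitting argument. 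Your observation that a correct argument along your lines would ``extend verbatim'' to all $G$ with $\St_{n-1}\in\Tilt G$ should itself have been a warning: it would settle the question posed immediately after this proposition, which the paper leaves open.
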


\begin{proof}
Suppose $f:X\to Y$ is a morphism in $I_2^{\rm max}$ (as an ideal in $\Tilt\SL_3$). If the corresponding adjoint map $g:\unit\to Y\otimes X^*$ factors through an indecomposable object in $J_2\setminus I_2$, then we reach a contradiction by the proof of Proposition~\ref{PropUniqueIdeal}, so suppose it factors through an object not in $J_2$. By \cite[Theorem~1.3]{RaCells}, any $T(\lambda)\not\in J_2$ has the same Weyl factor multiplicities as the corresponding indecomposable tilting module of $U_q(\mathfrak{sl}_3)$ with $q$ a $p$-th root of unity, so by \cite[\S 6.2.1]{tensorideals2} the only such modules $T(\lambda)$ with $\unit$ in their socle are $\unit$ and $T((3p-3)\varpi_i)$ for $i\in\{1,2\}$. The module $\St_1\otimes L(\lambda)$ is tilting for all simple modules $L(\lambda)$ with $\lambda$ below $(3p-3)\varpi_i$ (in particular, this is true for $\lambda\in X_1(T)$ by \cite[Theorem~1.2.1]{dconj3} and for $L(\mu)^{(1)}$ with $\mu\in A$ by Donkin's theorem, so it is true for $L(\lambda)\otimes L(\mu)^{(1)}$ also). This means all composition factors of $T((3p-3)\varpi_i)$ become tilting when tensored with $\St_1$. Since tilting modules have no extensions with each other by \cite[Corollary~E.2]{Jantzen}, the short exact sequence
\[0\to\St_1\to T((3p-3)\varpi_i)\otimes\St_1\to(T((3p-3)\varpi_i)/\unit)\otimes\St_1\to0\]
splits. Thus the inclusion $\unit\to T((3p-3)\varpi_i)$ being in $I_2^{\rm max}$ implies $\St_1\in I_2$, a contradiction. Hence, if $g$ is a morphism in $I_2^{\rm max}$ then $Y\otimes X^*\in I_2$ and hence $g\in I_2^{\rm min}$ as required.
\end{proof}

\begin{question}
Does $I_n$ have a unique corresponding tensor ideal in $\Tilt G$ for all $G$ and $p$?
\end{question}

\begin{remark}\label{RemNonFull}
The functor $\overline\cT_n/\overline\cI_n\to\Ver_{p^n}(G)$ is typically not full. In particular, it is possible for non-isomorphic objects in $\overline\cT_n/\overline\cI_n$ to have isomorphic images in $\Ver_{p^n}(G)$. For example, consider the case $G=\SL_2$, $p=3$ and $n=2$. The first $p^n-2=7$ tilting modules have the following diagrams of composition factors (in the sense of \cite{Alperin}):
\[L_0,\quad L_1,\quad L_2,\quad
\begin{tikzpicture}[baseline=-0.65ex,scale=0.7]
\node (1) at (0,-1) {$L_1$}; \node (2) at (0,0) {$L_3$}; \node (3) at (0,1) {$L_1$};
\draw (1) -> (2); \draw (2) -> (3); \end{tikzpicture},\quad
\begin{tikzpicture}[baseline=-0.65ex,scale=0.7]
\node (1) at (0,-1) {$L_0$}; \node (2) at (0,0) {$L_4$}; \node (3) at (0,1) {$L_0$};
\draw (1) -> (2); \draw (2) -> (3); \end{tikzpicture},\quad L_5,\quad
\begin{tikzpicture}[baseline=-0.65ex,scale=0.7]
\node (1) at (0,-1) {$L_4$}; \node (2) at (-0.8,0) {$L_6$}; \node (3) at (0.8,0) {$L_0$}; \node (4) at (0,1) {$L_4$};
\draw (1) -> (2); \draw (1) -> (3); \draw (3) -> (4); \draw (2) -> (4); \end{tikzpicture},\quad
\begin{tikzpicture}[baseline=-0.65ex,scale=0.7]
\node (1) at (0,-1) {$L_3$}; \node (2) at (-0.8,0) {$L_7$}; \node (3) at (0.8,0) {$L_1$}; \node (4) at (0,1) {$L_3$};
\draw (1) -> (2); \draw (1) -> (3); \draw (3) -> (4); \draw (2) -> (4); \end{tikzpicture}.\]
The process of translating this picture to $\Ver_9$ can be intuited as ``removing every instance of $L_6$ and $L_7$'', and in Section~\ref{SecVerpnSL2} we formalise this idea as a Serre quotient. This changes the filtrations of $T_6$ and $T_7$, becoming $[L_4,L_0,L_4]$ and $[L_3,L_1,L_3]$ in $\Ver_9$. However, notice that the subobjects of $T_6$ given by $L_4$ and $[L_4,L_6]$ are non-isomorphic objects in $\overline\cT_2/\overline\cI_2$, but they both have image $L_4$ in $\Ver_9$.

The following is another example in the case $G=\SL_3$, $p=3$ and $n=2$. We write weights $k_1\varpi_1+k_2\varpi_2$ as $(k_1,k_2)$, and note that $\St_1=L(2,2)$. It can be computed that
\[\St_1\otimes T(1,0)^*\cong T(2,3),\quad T(1,2)\otimes T(1,0)\cong\St_1\oplus T(0,3),\quad T(8,0)\otimes T(1,0)\cong T(9,0)\oplus\St_1,\]
and the only indecomposable object of $J_2\setminus I_2$ in the linkage class of $T(1,2)$ or $T(8,0)$ is $T(2,3)$. Hence, the images in $\Ver_9(\SL_3)$ of $T(1,2)$, $T(8,0)$ are
\begin{align*}
\Hom(T(2,3),T(1,2))&\cong\Hom(\St_1,\St_1\oplus T(3,0)),\\
\Hom(T(2,3),T(8,0))&\cong\Hom(\St_1,T(9,0)\oplus\St_1)
\end{align*}
respectively, which are both 1-dimensional. Since $T(2,3)$ is indecomposable, $\End(T(2,3))$ has a basis $\{1,f_1,\dots,f_m\}$ with each $f_i$ nilpotent, and thus the action of $f_i$ on any 1-dimensional module of $\End(T(2,3))$ is zero. This means the morphism sets above are isomorphic as $\End(T(2,3))$-modules, and so $T(1,2)$ and $T(8,0)$ have isomorphic images in $\Ver_9(\SL_3)$.
\end{remark}

\begin{remark}\label{RemQuantum}
In \cite{STWZ}, higher Verlinde categories for the quantum group of $\SL_2$ are defined, and their properties are studied extensively in \cite{decoppet}. We note here that this definition naturally extends to other groups $G$.

We assume for simplicity that $p\geq 2h-2$ and $G=\widetilde{\cD G}$ is quasi-simple and simply connected, and we let $l\geq h$ be an integer. Write $U_q$ for the quantum group corresponding to $G$ for a primitive $l$-th root of unity $q$ in $\bk$. Write $L_q(\lambda)$ and $T_q(\lambda)$ for the simple and indecomposable tilting modules of $U_q$ with highest weight $\lambda\in\Lambda$, and we define a quantum Steinberg module $\St_q=L((l-1)\rho)=T((l-1)\rho)$. For $n\in\mN_{>0}$, write
\[\Lambda_{n,q}=\{\lambda\in\Lambda^+\mid 0\leq\langle\lambda,\alpha_i^\vee\rangle<lp^{n-1}\text{ for }1\leq i\leq r\}\] 
where $r$ is the rank of $G$. We recall from \cite{pfilt1} that there is a functor $\Rep G\to\Rep U_q$ which we denote $X\mapsto X^{[l]}$, and \cite[Corollary~5.8]{pfilt1} states
\[T_q(\lambda+l\mu)\cong T_q(\lambda)\otimes T(\mu)^{[l]}\text{ for $\lambda\in(l-1)\rho+\Lambda_{1,q}$ and $\mu\in\Lambda^+$}.\]
Consequently, the functor $\Tilt G\to\Tilt U_q$ given by $X\mapsto\St_q\otimes X^{[l]}$ is injective on isomorphism classes of objects. For an integer $n\geq2$, we make the following definitions:
\begin{enumerate}
\item Let $\cT_n(U_q)$ be the full subcategory of $\Tilt U_q$ consisting of objects whose indecomposable summands have highest weights in $\{0\}\cup((lp^{n-2}-1)\rho+\Lambda^+)$.
\item Let $I_n(U_q)$ be the class of objects in $\Tilt U_q$ whose indecomposable summands have highest weights in $(lp^{n-2}-1)\rho+\Lambda_{p,n-1}+lp^{n-2}(\Lambda^+\setminus A)$.
\item Let $J_n(U_q)$ be the class of objects in $\Tilt U_q$ whose indecomposable summands have highest weights in $(lp^{n-2}-1)\rho+\Lambda^+$.
\end{enumerate}
For $n=1$ we instead define $\cT_1(U_q)=\Tilt U_q$ and $J_1(U_q)=\Ob(\Tilt U_q)$, and set $I_1(U_q)$ to be the thick ideal with indecomposables $T(\lambda)$ for $\lambda\in\Lambda^+\setminus A$. We can then define $\overline\cT_n(U_q)$, $\overline I_n(U_q)$ and $\overline J_n(U_q)$ similarly to in Section~\ref{SecBarDef}.

For $\lambda\in(l-1)\rho+\Lambda_{1,q}$, $T(\lambda)$ is a summand of $\St_q\otimes T(\lambda-(l-1)\rho)$. So by adjunction we obtain a morphism $\St_q\hookrightarrow T(\lambda)\otimes T(\lambda-(l-1)\rho)^*$, and the weights of the latter module are at most $3(l-1)\rho$. By \cite[Lemma~5.3]{pfilt1}, $\St_q$ is injective among modules with weights $\mu$ satisfying $\langle\mu,\theta^\vee\rangle+(l+1)(h-1)<2lp$ where $\theta$ is the highest shoot root, and this includes $\mu\leq 3(l-1)\rho$ since
\[\langle3(l-1)\rho,\theta^\vee\rangle+(l+1)(h-1)=(4l-2)(h-1)=(2l-1)(2h-2)\leq (2l-1)p<2lp.\]
Thus the map $\St_q\hookrightarrow T(\lambda)\otimes T(\lambda-(l-1)\rho)^*$ is split, and so all $T_q(\lambda)$ for $\lambda\in(l-1)\rho+\Lambda_{1,q}$ generate $\St_q$ in the tensor ideal sense. This, along with the tensor product theorem above, shows that a version of \cite[Proposition~14]{AnCells} holds for $U_q$, and thus Lemma~\ref{LemThickIdeal} applies to $U_q$ as well as $G$. Thus $J_n(U_q)$ is the minimal thick tensor ideal containing $I_n(U_q)$ in $\cT_n(U_q)$, and so an abelian envelope $\Ver_{p^{(n)}}^q(G)$ of $\cT_n(U_q)/I_n(U_q)^{\rm max}$ exists by Theorem~\ref{ThmCEOEnv}. Note that in this case the commutor $X\otimes Y\xrightarrow{\sim} Y\otimes X$ is a non-symmetric braiding, and $\Ver_{p^{(n)}}^q(G)$ is a braided tensor category. Repeating the proof of Lemma~\ref{LemStStable}, we can also conclude that the abelian envelope of $\overline\cT_n(U_q)/\overline I_n(U_q)^{\rm max}$ is $\Ver_{p^{(n)}}^q(G)$.
\end{remark}

\section{Properties of $\Ver_{p^n}(G)$}\label{SecProperties}

We now show that the functor $\Ver_{p^n}\to\Ver_{p^{n+1}}$ from \cite[\S 4.10]{BEO} arises directly from the Frobenius twist functor on $\Rep\SL_2$, and that this generalises to arbitrary $G$, proving property (\ref{ItemFrob}) in Theorem~\ref{ThmMain}.

\begin{lemma}\label{LemFrob}
For $n\in\mN_{>0}$ the Frobenius twist functor $(-)^{(1)}:\Rep G\to\Rep G$ restricts to a functor $F:\overline\cT_n\to\overline\cT_{n+1}$ with $F^{-1}(\overline\cI_{n+1}(G))=\overline\cI_n(G)$.
\end{lemma}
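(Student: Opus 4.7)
The plan is to prove three claims: that $F$ restricts to a functor $\overline{\cT}_n \to \overline{\cT}_{n+1}$, the inclusion $F(\overline{\cI}_n) \subseteq \overline{\cI}_{n+1}$, and its reverse $F^{-1}(\overline{\cI}_{n+1}) \subseteq \overline{\cI}_n$. The key computational tool throughout is Steinberg's decomposition $\St_n \cong \St_1 \otimes \St_{n-1}^{(1)}$ combined with the instance $\St_1 \otimes T(\nu)^{(1)} = T((p-1)\rho + p\nu)$ of Donkin's tensor product theorem, valid for all $\nu \in \Lambda^+$ (since $(p-1)\rho \in (p-1)\rho + \Lambda_1$).

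For the first claim, if $X \in \overline{\cT}_n$ then the isomorphism $X^{(1)} \otimes \St_n \cong \St_1 \otimes (X \otimes \St_{n-1})^{(1)}$ combined with Donkin applied summand-wise to the tilting module $X \otimes \St_{n-1}$ shows $X^{(1)} \otimes \St_n$ is tilting, so $X^{(1)} \in \overline{\cT}_{n+1}$. For the forward inclusion, if $f \in \overline{\cI}_n$ then $f \otimes \id_{\St_{n-1}}$ factors through some $T \in I_n$ by Lemma~\ref{LemMorFactor}. Each indecomposable summand $T(\lambda) \otimes T(\mu)^{(n-1)}$ of $T$ (with $\lambda \in (p^{n-1}-1)\rho + \Lambda_{n-1}$ and $\mu \in \Lambda^+ \setminus A$) maps under $\St_1 \otimes (-)^{(1)}$ to $T((p^n-1)\rho + p\lambda' + p^n\mu) \in I_{n+1}$, where $\lambda' := \lambda - (p^{n-1}-1)\rho \in \Lambda_{n-1}$ so that $p\lambda' \in \Lambda_n$. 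Hence $\St_1 \otimes T^{(1)} \in I_{n+1}$, and since $f^{(1)} \otimes \id_{\St_n} \cong \St_1 \otimes (f \otimes \id_{\St_{n-1}})^{(1)}$ factors through this, Lemma~\ref{LemMorFactor} gives $f^{(1)} \in \overline{\cI}_{n+1}$.

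The reverse inclusion is the main obstacle; I would argue by contrapositive. Suppose $f \notin \overline{\cI}_n$ and set $g := f \otimes \id_{\St_{n-1}}$. By Lemma~\ref{LemMorFactor} and Proposition~\ref{PropUniqueIdeal} (uniqueness $\cI_n = I_n^{\max}$), the tensor ideal in $\cT_n$ generated by $g$ contains $\id_W$ for some indecomposable $W \in J_n \setminus I_n$, giving a presentation
\[\id_W = \sum_k \beta_k \circ (g \otimes \id_{U_k}) \circ \alpha_k\]
with $U_k \in \cT_n$. Applying $\St_1 \otimes (-)^{(1)}$ and tensoring with $\id_{\St_n}$ realises $\id_{\Psi(W) \otimes \St_n}$ (where $\Psi(W) := \St_1 \otimes W^{(1)}$) as a sum of compositions each factoring through $f^{(1)} \otimes \id_{\St_n}$ tensored with $\id_{U_k^{(1)} \otimes \St_n}$. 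If $f^{(1)} \otimes \id_{\St_n}$ factored through some $T'' \in I_{n+1}$, each summand would factor through $T'' \otimes U_k^{(1)} \otimes \St_n$, which is a tilting module in $I_{n+1}$ since $U_k^{(1)} \otimes \St_n \cong \St_1 \otimes (U_k \otimes \St_{n-1})^{(1)}$ is tilting by Donkin and $I_{n+1}$ is a thick tensor ideal in $\Tilt G$; this would force $\Psi(W) \otimes \St_n \in I_{n+1}$. However, the computation above shows $\Psi(W) \in J_{n+1} \setminus I_{n+1}$, so both $\Psi(W)$ and $\St_n$ have non-zero image in the tensor category $\Ver_{p^{n+1}}(G)$, and tensor products of non-zero objects in a tensor category are non-zero; hence $\Psi(W) \otimes \St_n \notin I_{n+1}$, yielding the desired contradiction and so $f^{(1)} \notin \overline{\cI}_{n+1}$ via Lemma~\ref{LemMorFactor}.
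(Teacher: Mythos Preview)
Your proof is correct. The restriction claim and the forward inclusion $F(\overline\cI_n)\subseteq\overline\cI_{n+1}$ match the paper's argument essentially line for line, via $X^{(1)}\otimes\St_n\cong\St_1\otimes(X\otimes\St_{n-1})^{(1)}$ and Donkin's theorem applied summandwise.

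For the reverse inclusion you take a genuinely different route. The paper does not argue by contrapositive or generation; instead it observes that the same Donkin computation gives an \emph{if and only if}: for $Y\in J_n$ one has $Y\in I_n\Leftrightarrow Y^{(1)}\otimes\St_1\in I_{n+1}$, whence $F^{-1}(\overline I_{n+1})=\overline I_n$ at the level of thick ideals. Since $F^{-1}(\overline\cI_{n+1})$ is then a tensor ideal in $\overline\cT_n$ with associated thick ideal $\overline I_n$, and $\overline\cI_n=\overline I_n^{\max}$ by definition, the inclusion $F^{-1}(\overline\cI_{n+1})\subseteq\overline\cI_n$ is immediate. Your contrapositive argument---generating $\id_W$ from $g$, pushing through $\St_1\otimes(-)^{(1)}$ and $-\otimes\St_n$, and detecting nonvanishing via the tensor category $\Ver_{p^{n+1}}(G)$---is valid but essentially rederives this maximality consequence by hand. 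The paper's route is a one-line observation once the iff is in place; yours is more self-contained (it does not explicitly invoke the $\max$ definition of $\overline\cI_n$) and illustrates concretely why the ideal cannot grow, at the cost of several extra steps and an appeal to $\Ver_{p^{n+1}}(G)$ that the paper avoids here.
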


\begin{proof}
We have $X\in\overline\cT_n(G)$ if and only if $X\otimes\St_{n-1}$ is a tilting module of $\widetilde G$. Then by Donkin's tensor product theorem, $X^{(1)}\otimes\St_n\cong(X\otimes\St_{n-1})^{(1)}\otimes\St_1$ is also a tilting module, so $X^{(1)}\in\overline\cT_{n+1}(G)$. By Lemma~\ref{LemStStable}, any indecomposable summand $Y\subseteq X\otimes\St_{n-1}$ is isomorphic to $T((p^{n-1}-1)\rho+\lambda+p^{n-1}\mu)$ for some $\lambda\in\Lambda_{n-1}$ and $\mu\in\Lambda^+$. Then $Y\in I_n(\widetilde G)$ if and only if $\mu\not\in A$, but also $Y^{(1)}\otimes\St_1\in I_{n+1}(\widetilde G)$ if and only if $\mu\not\in A$ since
\[Y^{(1)}\otimes\St_1\cong T((p-1)\rho+p((p^{n-1}-1)\rho+\lambda+p^{n-1}\mu))=T((p^n-1)\rho+p\lambda+p^n\mu).\]
Thus, $F^{-1}(I_{n+1}(G))=I_n(G)$. If $f\in\overline\cI_n(G)$ then $f\otimes\id_{\St_{n-1}}$ factors through an object in $I_n(\widetilde G)$ by Lemma~\ref{LemMorFactor}, so $f^{(1)}\otimes\id_{\St_n}=(f\otimes\id_{\St_{n-1}})^{(1)}\otimes\id_{\St_1}$ factors through an object $I_{n+1}(\widetilde G)$ and $f\in\overline\cI_{n+1}(G)$, hence $F^{-1}(\overline\cI_{n+1}(G))=\overline\cI_n(G)$.
\end{proof}

\begin{prop}\label{PropFrob}
There is an inclusion of tensor categories $\Ver_{p^n}(G)\hookrightarrow\Ver_{p^{n+1}}(G)$ for $n\in\mN_{>0}$ such that the diagram of functors
\[\begin{tikzcd}
\overline\cT_n(G) \arrow{d}{(-)^{(1)}} \arrow{r} & \overline\cT_n(G)/\overline\cI_n(G) \arrow{d} \arrow{r} & \Ver_{p^n}(G) \arrow[hook]{d} \arrow{r} & \Ver_{p^n} \arrow[hook]{d}\\
\overline\cT_{n+1}(G) \arrow{r} & \overline\cT_{n+1}(G)/\overline\cI_{n+1}(G) \arrow{r} & \Ver_{p^{n+1}}(G) \arrow{r} & \Ver_{p^{n+1}}
\end{tikzcd}\]
commutes, where the functors $\Ver_{p^n}(G)\to\Ver_{p^n}$ and $\Ver_{p^{n+1}}(G)\to\Ver_{p^{n+1}}$ come from a fixed principal morphism $\SL_2\to G$.
\end{prop}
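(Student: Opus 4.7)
The plan is to construct $\Phi:\Ver_{p^n}(G)\to\Ver_{p^{n+1}}(G)$ via the universal property of the abelian envelope, then verify the diagram commutativity and the inclusion property. By Lemma~\ref{LemFrob} the Frobenius twist descends to a faithful pseudo-tensor functor $\overline F:\overline\cT_n/\overline\cI_n\to\overline\cT_{n+1}/\overline\cI_{n+1}$, and composing with the abelian envelope functor $\overline\cT_{n+1}/\overline\cI_{n+1}\to\Ver_{p^{n+1}}(G)$ yields a pseudo-tensor functor into the tensor category $\Ver_{p^{n+1}}(G)$; by the universal property of the abelian envelope (Theorem~\ref{ThmFullEnv}) this extends uniquely to $\Phi$. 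The leftmost and middle squares commute by construction. For the rightmost square I use that $\phi:\SL_2\to G$ is a morphism of $\bk$-group schemes, so it commutes with the Frobenius endomorphisms of $\SL_2$ and $G$; combined with Proposition~\ref{PropRestriction2} this descends to the desired compatibility on abelian envelopes.

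To verify that $\Phi$ is an inclusion I check it sends simples to simples and is full. For simples, by Proposition~\ref{PropSimps} the simples in $\Ver_{p^n}(G)$ are images of $L(\lambda+p^{n-1}\mu)$ with $\lambda\in\Lambda_{n-1}$, $\mu\in A$, and their Frobenius twists $L(p\lambda+p^n\mu)$ satisfy $p\lambda\in p\Lambda_{n-1}\subseteq\Lambda_n$ and $\mu\in A$, so are simples in $\Ver_{p^{n+1}}(G)$ by the same proposition at level $n+1$; distinct simples map to distinct simples by injectivity of $\nu\mapsto p\nu$. For fullness, the key identity is that Donkin's tensor product theorem gives $\St_1\otimes T(\lambda)^{(1)}\cong T((p^n-1)\rho+p\lambda_0+p^n\mu_0)$ when $\lambda=(p^{n-1}-1)\rho+\lambda_0+p^{n-1}\mu_0$ is in the projective range of $\Ver_{p^n}(G)$; this shows $\St_1\otimes\Phi(P)$ is projective in $\Ver_{p^{n+1}}(G)$ for any projective $P$ of $\Ver_{p^n}(G)$. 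Since $\St_1$ is a simple of non-zero categorical dimension in $\Ver_{p^{n+1}}(G)$ (by Lemma~\ref{LemCharacters} applied after restriction to a principal $\SL_2$), $\unit$ appears as a direct summand of $\St_1\otimes\St_1^*$, so $\Hom(\Phi P,\Phi Q)$ is a direct summand of $\Hom(\Phi P\otimes\St_1,\Phi Q\otimes\St_1)$, a Hom space between projectives in $\Ver_{p^{n+1}}(G)$; this latter space is computed by morphisms in $\overline\cT_{n+1}/\overline\cI_{n+1}$ via fullness of the envelope functor on splitting objects, and matched with $\Hom(P,Q)$ via the full faithfulness of $\overline F$.

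The main obstacle I anticipate is completing this Hom-space matching rigorously, since $\Phi$ does not in general send projective covers to projective covers (for example $\Phi(\unit)=\unit$ is not projective in $\Ver_{p^{n+1}}(G)$ for $n\geq 1$). Once fullness is established between projectives through the $\St_1$-tensoring device, it extends to arbitrary objects via the exactness of $\Phi$ and the existence of projective presentations in $\Ver_{p^n}(G)$.
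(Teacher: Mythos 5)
Your overall architecture matches the paper's: construct $\Phi$ via the universal property (Lemma~\ref{LemFrob} plus the envelope), read off commutativity of the left squares, use that Frobenius twist commutes with restriction to a principal $\SL_2$ for the right square, use Proposition~\ref{PropSimps} for simples-to-simples, establish fullness on projectives, and extend. The first two-thirds of the proposal is correct and essentially the paper's argument. The fullness step, however, has two genuine gaps.

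First, the claim that $\St_1$ has non-zero categorical dimension in $\Ver_{p^{n+1}}(G)$, so that $\unit$ is a direct summand of $\St_1\otimes\St_1^*$, is false. The functor $\overline\cT_{n+1}\to\Ver_{p^{n+1}}(G)$ is $\bk$-linear monoidal and so preserves categorical dimension, which for an object of $\Rep G$ is just its $\bk$-vector-space dimension reduced mod $p$; since $\dim_\bk\St_1=p^{|\Phi^+|}\equiv 0$, the dimension of $\St_1$ in $\Ver_{p^{n+1}}(G)$ is zero, and for a simple object of dimension zero the pairing $\Hom(\unit,X\otimes X^*)\times\Hom(X\otimes X^*,\unit)\to\bk$ vanishes identically, so $\unit$ cannot be a summand. (Concretely, for $G=\SL_2$ and $p=3$ one has $\St_1\otimes\St_1^*=T_4\oplus T_2$, with no copy of $T_0$.) Lemma~\ref{LemCharacters} is about membership in $I_n(\SL_2)$, not about categorical dimensions in $\Ver_{p^n}$, so it does not rescue the claim. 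With the split-summand property gone, you only retain the plain injection $\Hom(\Phi P,\Phi Q)\hookrightarrow\Hom(\Phi P\otimes\St_1,\Phi Q\otimes\St_1)$, which is not by itself enough to transfer surjectivity back.

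Second, even granting an embedding into a Hom between projectives, the proposed ``matching with $\Hom(P,Q)$ via full faithfulness of $\overline F$'' is unjustified. Lemma~\ref{LemFrob} gives that $\overline F:\overline\cT_n/\overline\cI_n\to\overline\cT_{n+1}/\overline\cI_{n+1}$ is faithful (because $F^{-1}(\overline\cI_{n+1})=\overline\cI_n$), but fullness of $\overline F$, or even surjectivity of $f\mapsto f^{(1)}\otimes\id_{\St_1}$ onto $\Hom_{\overline\cT_{n+1}/\overline\cI_{n+1}}(P^{(1)}\otimes\St_{1},Q^{(1)}\otimes\St_1)$, is not proven anywhere and is essentially equivalent to the fullness statement you are trying to establish. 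The paper avoids this by a different calculation: it reduces fullness on projectives, via adjunction and Proposition~\ref{PropTiltCover}, to bounding $\dim\Hom_{\Ver_{p^{n+1}}(G)}(\unit,T(2(p^{n-1}-1)\rho)^{(1)})$, embeds this space (using only left-exactness of $\Hom(-,X)$ and that $\unit$ is a quotient of $T(2(p-1)\rho)^*$, not a split summand) into $\Hom_{\Ver_{p^{n+1}}(G)}(\unit,T(2(p^n-1)\rho))$, and then computes the latter entirely inside $\cT_{n+1}/\cI_{n+1}$ via \cite[Proposition~2.24]{BEO} and Lemma~\ref{LemTiltCover} — never comparing Hom spaces across $n$ and $n+1$ beyond the known injectivity. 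If you want to keep your $\St_1$-tensoring idea you would need to replace the nonexistent splitting with the mere left-exact embedding and then actually compute the target Hom space in $\Ver_{p^{n+1}}(G)$, at which point the argument collapses back into the paper's.

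Finally, the concluding step of promoting fullness on projectives to an inclusion of tensor categories also needs more than ``exactness and projective presentations'': you need to know the essential image is closed under subquotients, which is why the paper passes through \cite[Proposition~1.8.19]{EGNO} and compares the projective subcategories of $\Ver_{p^n}(G)$ and $\mathsf{Im}\,F'$ directly. That part is fixable along the lines you indicate, but it should be spelled out.
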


\begin{proof}
The functor $F':\Ver_{p^n}(G)\to\Ver_{p^{n+1}}(G)$ comes from the functor $F$ in Lemma~\ref{LemFrob}. This commutes with $\Ver_{p^n}\to\Ver_{p^{n+1}}$ via the functors in Proposition~\ref{PropRestriction2} since the Frobenius twist commutes with restriction to a principal $\SL_2$. To show $F'$ is an inclusion of tensor categories, we first show that $F'$ is full on projective objects in $\Ver_{p^n}(G)$. By adjunction and Proposition~\ref{PropTiltCover} it suffices to show that $\dim V\leq1$ where
\[V\coloneqq\Hom_{\Ver_{p^{n+1}}(G)}(\unit,T(2(p^{n-1}-1)\rho))\cong\Hom_{\Ver_{p^{n+1}}(G)}(\unit,T(2(p^{n-1}-1)\rho)^{(1)}).\]
Since $\unit$ is a quotient of $T(2(p-1)\rho)^*$, $\dim V\leq\dim V'$ where
\[V'\coloneqq\Hom_{\Ver_{p^{n+1}}(G)}(T(2(p-1)\rho)^*,T(2(p^{n-1}-1)\rho)^{(1)}).\]
By adjunction and the fact that $T(2(p-1)\rho)$ and $T(2(p^{n-1}-1)\rho)^{(1)}$ are both in $\overline\cT_n(G)$,
\[V'\cong\Hom_{\Ver_{p^{n+1}}(G)}(\unit,T(2(p^n-1)\rho)).\]
Then using \cite[Proposition~2.24]{BEO} we have
\[V'\cong\Hom_{\cT_{n+1}(G)/\cI_{n+1}(G)}(\unit,T(2(p^n-1)\rho))\]
which is 1-dimensional by Lemma~\ref{LemTiltCover} as required. So $F'$ is full on projectives.

Now, by \cite[Proposition~1.8.19]{EGNO}, the full subcategory ${\sf Im}F'$ of $\Ver_{p^{n+1}}(G)$ consisting of subquotients of objects in the image of $F'$ is a finite abelian category. Every projective object $Q\in{\sf Im}F'$ is a subquotient of $F(P)$ for some projective $P\in\Ver_{p^n}(G)$, meaning $F(P)$ must also be projective in ${\sf Im}F'$. Moreover, this means $Q$ is a summand of $F(P)$ and so it has a preimage in $\Ver_{p^n}(G)$ which is also projective. Thus the functor $\Ver_{p^n}(G)\to{\sf Im}F'$ restricts to the subcategories of projective objects, and this restriction is fully-faithful and essentially surjective. Thus $\Ver_{p^n}(G)\to{\sf Im}F'$ is an equivalence of abelian categories and hence an equivalence of tensor categories, so $F'$ is an inclusion.
\end{proof}

\subsection{The subcategory $\Ver_{p^n}^+(G)$} Write $Z=Z(G)$ for the centre of $G$, and $G^{\rm ad}=G/Z$. Note that $G^{\rm ad}\cong\widetilde G/Z(\widetilde G)\cong\cD G/Z(\cD G)$. We have induced functors
\[\Rep G^{\rm ad}\to\Rep G,\quad \Tilt G^{\rm ad}\to\Tilt G,\quad \cT_n(G^{\rm ad})\to\cT_n(G)\]
which are full pseudo-tensor functors, and their essential images consist of all modules whose weights are in the root lattice. We see from Proposition~\ref{PropUniqueIdeal} that the preimage of $\cI_n(G)$ under the latter functor is $\cI_n(G^{\rm ad})$. Hence we obtain an inclusion of tensor categories $\Ver_{p^n}(G^{\rm ad})\hookrightarrow\Ver_{p^n}(G)$, and this is an equivalence with a tensor subcategory of $\Ver_{p^n}(G)$ which we denote $\Ver_{p^n}^+(G)$. For example, the category $\Ver_{p^n}^+=\Ver_{p^n}^+(\SL_2)$ as defined in \cite{BEO} is equivalent to $\Ver_{p^n}(\PGL_2)$.

Fix a principal map $\phi:\SL_2\to G$. As described in \cite[\S 3.3.1]{CEN}, this induces a map $z:\mZ/2\to Z$ between the centres. Recall from \cite[Theorem 3.3.2]{CEN} that if $p>h$ then the subcategory of invertible objects in $\Ver_p(G)$ is equivalent to $\Rep_{\sVec}(Z,z)$ (this is stated for quasi-simple groups, but the result naturally extends to connected reductive groups). The following proposition is property (\ref{ItemDecomp}) in Theorem~\ref{ThmMain}:

\begin{prop}\label{PropCentDecomp}
If $p>h$, then $\Ver_{p^n}(G)\simeq\Ver_{p^n}^+(G)\boxtimes\Rep_{\sVec}(Z,z)$.
\end{prop}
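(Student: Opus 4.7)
The plan is to emulate the $n=1$ argument of \cite[Theorem 3.3.2]{CEN} while bootstrapping off that case. First I would construct a candidate tensor functor
\[F\colon \Ver_{p^n}^+(G) \boxtimes \Rep_{\sVec}(Z,z) \longrightarrow \Ver_{p^n}(G), \quad (X,Y)\mapsto X\otimes Y,\]
where the first factor is the tensor subcategory $\Ver_{p^n}^+(G)\subseteq\Ver_{p^n}(G)$ defined above, and the second factor embeds via the composition $\Rep_{\sVec}(Z,z)\hookrightarrow\Ver_p(G)\hookrightarrow\Ver_{p^n}(G)$, combining \cite[Theorem 3.3.2]{CEN} with $n-1$ applications of Proposition~\ref{PropFrob}. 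For $F$ to descend from the Deligne product as a tensor functor, the two embedded subcategories must tensor-commute symmetrically in $\Ver_{p^n}(G)$; this is automatic because the image of the second factor consists of invertibles, whose braiding with any object is a scalar with the supersign determined by $z\colon\mZ/2\to Z$.

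Next I would show that $F$ is faithful using the central grading. Restriction to $Z\subseteq G$ equips each object of $\overline\cT_n$ with a grading by $X(Z)$; since every morphism is $G$-equivariant and hence $Z$-equivariant, this descends to a decomposition $\Ver_{p^n}(G)=\bigoplus_{\chi\in X(Z)}\Ver_{p^n}(G)_\chi$ with $\Ver_{p^n}(G)_0=\Ver_{p^n}^+(G)$. The image of $F$ on the $\chi$-th simple of $\Rep_{\sVec}(Z,z)$ lies in $\Ver_{p^n}(G)_\chi$, so faithfulness reduces to checking that tensoring by each such invertible gives a fully faithful embedding $\Ver_{p^n}^+(G)\hookrightarrow\Ver_{p^n}(G)_\chi$, which follows from the invertibility of the tensor factor.

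The last step is essential surjectivity, equivalent to showing every graded component $\Ver_{p^n}(G)_\chi$ contains an invertible. One constructs such invertibles by combining characters of $G$ (which contribute invertibles in the image of $X(G)\to X(Z)$) with Frobenius-lifted invertibles from $\Ver_p(G)$ (which contribute invertibles shifted by the $p^{n-1}$-power map on $X(Z)$). For $p>h$, the fundamental alcove $A$ is large enough that these combinations exhaust $X(Z)$; equivalently, Theorem~\ref{ThmMain}(\ref{ItemProj}) can be used to check that the total count of indecomposable projectives in $\Ver_{p^n}(G)$ equals $|X(Z)|$ times that in $\Ver_{p^n}^+(G)$ when $p>h$. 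This is the main obstacle I anticipate: verifying that $((p^{n-1}-1)\rho+\Lambda_{n-1}+p^{n-1}A)\cap X(T)$ meets every coset of the root lattice $\mZ\Phi$ in $X(T)$ with the same multiplicity. Once this is established, faithfulness together with matching counts of simples forces $F$ to be an equivalence of finite tensor categories.
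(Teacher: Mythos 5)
Your proposal is correct and takes essentially the same route as the paper: decompose $\Ver_{p^n}(G)$ by the central character (equivalently by $\mZ\Phi$-cosets of weights in $X(T)$), observe that $\Ver_{p^n}^+(G)$ is the zeroth component, and use invertibles to permute components transitively so that the Deligne product appears. The one substantive difference is in how you handle the step you flag as the ``main obstacle,'' the exhaustion of all cosets by invertibles. You propose to manufacture invertibles from two sources — characters of $G$ and Frobenius-lifted invertibles from $\Ver_p(G)$ — and then do a counting check on indecomposable projectives via Theorem~\ref{ThmMain}(\ref{ItemProj}). The paper instead cites \cite[Proposition~3.3.6]{CEN} directly: for $p>h$, the simple tiltings $T(\mu)=L(\mu)$ with $\mu\in A$ that are invertible in $\Ver_p(G)$ have highest weights running over a complete set of coset representatives for $X(T)/\mZ\Phi$, and their images remain invertible in $\Ver_{p^n}(G)$. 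This removes the counting argument entirely and avoids any dependence on $p^{n-1}$-multiplication acting bijectively on $X(Z)$ (which is what your ``Frobenius-lifted'' invertibles implicitly require, and which is true for $p>h$ but needs a separate check). So your worry is resolvable, but you should replace the back-of-the-envelope projective count with the sharper result \cite[Proposition~3.3.6]{CEN}, which gives the coset parameterization outright. The rest — tensor-commutativity being automatic because one factor is a group of invertibles, and faithfulness via the $X(Z)$-grading — matches the paper's reasoning, though the paper phrases it as a direct-sum decomposition of the abelian category rather than constructing a functor out of the Deligne product and checking it is an equivalence; both are standard ways to exhibit the same thing.
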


\begin{proof}
By \cite[Proposition~3.3.6]{CEN}, the invertible objects in $\Ver_p(G)$ come from tilting modules $T(\mu)$ whose highest weights $\mu\in A$ are a complete set of coset representatives for $X(T)/\mZ\Phi$, where $\mZ\Phi$ is the root lattice. The image of $T(\mu)$ under $\Ver_p(G)\to\Ver_{p^n}(G)$ is also invertible. Since the weights of an indecomposable tilting module $T(\lambda)$ are in $\lambda+\mZ\Phi$, we can decompose $\Ver_{p^n}(G)$ (as an abelian category) into a direct sum of subcategories consisting of modules whose weights lie in a particular $\mZ\Phi$-coset. Then tensoring by $T(\mu)$ permutes these subcategories in the same way that $\mu$ acts additively on $X(T)/\mZ\Phi$, and $\Ver_{p^n}^+(G)$ is exactly the subcategory containing $\unit$, so we have $\Ver_{p^n}(G)\simeq\Ver_{p^n}^+(G)\boxtimes\Rep_{\sVec}(Z,z)$.
\end{proof}

Note that Proposition~\ref{PropCentDecomp} fails for $p=h$. This is demonstrated for $G=\SL_2$ with $p=2$ in \cite{BEO}, and it can also be seen in the case $G=\SL_3$ with $p=3$.

\subsection{Perfection and $\Ver_{p^\infty}(G)$}\label{SecPerf} For a $\bk$-module $V$ and $r\in\mZ$ we write $V^{(-r)}$ for the $\bk$-module with the same underlying set as $V$ but with $\bk$-action $\lambda\cdot v=\lambda^{p^r}v$ for $\lambda\in\bk$ and $v\in V$ (see \cite[\S F.1]{Jantzen}). If $A$ is a $\bk$-algebra, then $A^{(-r)}$ is also a $\bk$-algebra, and we have a Frobenius morphism $\Fr:A^{(-r)}\to A^{(-r-1)}$ given by $a\mapsto a^p$. If $\cO(G)$ is the coordinate ring of $G$, then we write $G^{(r)}$ for the affine group scheme with coordinate ring $\cO(G)^{(r)}$. Recall the notion of the perfection of an algebraic group from \cite{CW}, defined as follows:
\begin{align*}
\cO(G)_{\perf}&=\varinjlim(\cO(G)\xrightarrow{\Fr}\cO(G)^{(-1)}\xrightarrow{\Fr}\cO(G)^{(-2)}\xrightarrow{\Fr}\cdots),\\
G_{\perf}&=\varprojlim(\cdots\xrightarrow{\Fr^*}G^{(-2)}\xrightarrow{\Fr^*}G^{(-1)}\xrightarrow{\Fr^*}G)\cong\Spec(\cO(G)_{\perf}).
\end{align*}
Given a $G$-module $X$ and $r\in\mZ$, $X^{(-r)}$ has a canonical $G^{(-r)}$-action which pulls back to an action of $G_\perf$. As described in \cite[\S 3.3.2]{CW}, every object in $\Rep(G_\perf)$ is of the form $X^{(-r)}$ for some $X$ and $r\geq0$, that is $\Rep(G_\perf)$ is the union of the categories $\Rep(G^{(-r)})$ for $r\geq0$ under the inclusions $\Rep(G^{(-r)})\to\Rep(G^{(-r-1)})$ given by restriction along $\Fr^*$.

For a full subcategory or class of objects $\cM$ in $\Rep G$, write $\cM^{(-r)}$ for the full subcategory or class of objects in $\Rep(G^{(-r)})$ with objects $X^{(-r)}$ for $X\in\cM$. We define a full subcategory $\overline\cT_\infty(G)$ of $\Rep(G_{\perf})$ and thick tensor ideal $I_\infty(G)$ in $\overline\cT_\infty(G)$ as follows:
\[\overline\cT_\infty(G)=\bigcup_{n=0}^\infty\overline\cT_{n+1}^{(-n)}(G),\quad\overline I_\infty(G)=\bigcup_{n=0}^\infty\overline I_{n+1}^{(-n)}(G).\]
If $\overline\cI_\infty(G)=\overline I_\infty(G)^{\rm max}$, then we also have $\overline\cI_\infty(G)=\bigcup_{n=0}^\infty\overline\cI_{n+1}^{(-n)}(G)$ by construction. The following proposition is property (\ref{ItemPerf}) in Theorem~\ref{ThmMain}:

\begin{prop}\label{PropVerInfty}
The abelian envelope of $\overline\cT_\infty(G)/\overline\cI_\infty(G)$ is the union
\[\Ver_{p^\infty}(G)\coloneqq\bigcup_{n=1}^{\infty}\Ver_{p^n}(G).\]
\end{prop}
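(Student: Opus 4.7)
The plan is to identify $\overline\cT_\infty(G)/\overline\cI_\infty(G)$ as a filtered colimit of the pseudo-tensor categories $\overline\cT_{n+1}(G)/\overline\cI_{n+1}(G)$ along the Frobenius twist functors of Lemma~\ref{LemFrob}, and then to combine the universal property of each $\Ver_{p^{n+1}}(G)$ with the tower of inclusions from Proposition~\ref{PropFrob} to deduce the universal property of the union.

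First I would unwind the definition of the perfection to observe that the canonical functor $\Rep(G^{(-n)})\to\Rep(G^{(-n-1)})$ sends $X^{(-n)}$ to $(X^{(1)})^{(-n-1)}$. Consequently, the ascending chain $\overline\cT_{n+1}^{(-n)}(G)\subseteq\overline\cT_{n+2}^{(-n-1)}(G)\subseteq\cdots$ inside $\Rep(G_\perf)$ is, up to the relabelling $X\mapsto X^{(-n)}$, exactly the tower of Frobenius twists $\overline\cT_{n+1}(G)\xrightarrow{(-)^{(1)}}\overline\cT_{n+2}(G)\xrightarrow{(-)^{(1)}}\cdots$ from Lemma~\ref{LemFrob}. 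Passing to quotients, and using $F^{-1}(\overline\cI_{n+1})=\overline\cI_n$ from that lemma, exhibits $\overline\cT_\infty/\overline\cI_\infty$ as the filtered colimit of the $\overline\cT_{n+1}/\overline\cI_{n+1}$ along Frobenius twist, as pseudo-tensor categories.

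Next I would assemble the comparison functor $F:\overline\cT_\infty/\overline\cI_\infty\to\Ver_{p^\infty}(G)$. Proposition~\ref{PropFrob} ensures that the composites $\overline\cT_{n+1}/\overline\cI_{n+1}\to\Ver_{p^{n+1}}(G)\hookrightarrow\Ver_{p^\infty}(G)$ intertwine the Frobenius twist with the inclusions $\Ver_{p^{n+1}}(G)\hookrightarrow\Ver_{p^{n+2}}(G)$, so they extend uniquely to a pseudo-tensor functor $F$ out of the colimit. To justify calling the target a tensor category, I would verify that a filtered union of tensor categories along exact fully faithful tensor inclusions stays $\bk$-linear abelian with rigid symmetric tensor product, that $\End(\unit)=\bk$ because this already holds in $\Ver_{p^1}(G)$, and that all morphism spaces remain finite-dimensional since each lies in some $\Ver_{p^{n+1}}(G)$.

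Finally I would verify the universal property directly. Given a pseudo-tensor functor $H:\overline\cT_\infty/\overline\cI_\infty\to\cC'$ into a tensor category, restriction to each $\overline\cT_{n+1}/\overline\cI_{n+1}$ together with Theorem~\ref{ThmFullEnv} produces unique tensor extensions $\widetilde H_n:\Ver_{p^{n+1}}(G)\to\cC'$; the compatibility of $H$ with the Frobenius twist forces the $\widetilde H_n$ to agree along $\Ver_{p^{n+1}}(G)\hookrightarrow\Ver_{p^{n+2}}(G)$, so they glue to a tensor functor $\widetilde H:\Ver_{p^\infty}(G)\to\cC'$ with $\widetilde H\circ F\simeq H$, unique up to unique isomorphism. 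The main obstacle I anticipate is bookkeeping: tracking the relabelling between the index $n$ in $\overline\cT_{n+1}^{(-n)}$ inside $\Rep(G_\perf)$ and the index $n+1$ in the corresponding $\Ver_{p^{n+1}}(G)$, and verifying carefully that the Frobenius twist on $\Rep G$ agrees with the transition map between the subcategories $\Rep(G^{(-n)})$ under the identification used to build $\Rep(G_\perf)$. Once this identification is pinned down, the rest is routine filtered-colimit manipulation.
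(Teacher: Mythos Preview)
Your proposal is correct and follows essentially the same approach as the paper: both identify $\overline\cT_\infty/\overline\cI_\infty$ as the filtered union of the $\overline\cT_{n}/\overline\cI_{n}$ along Frobenius twists, use Theorem~\ref{ThmFullEnv} at each level to factor through $\Ver_{p^{n}}(G)$, and then glue along the inclusions of Proposition~\ref{PropFrob}. The paper compresses this into a single commutative diagram and, instead of checking exactness of the glued functor directly as you do, invokes \cite[Theorem~2.4.1]{quoprop} to upgrade the resulting faithful monoidal functor to a tensor functor; your slightly longer route of verifying that $\Ver_{p^\infty}(G)$ is a tensor category and that exactness persists under the filtered union is equally valid.
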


\begin{proof}
We have the following commutative diagram of categories and functors:
\[\begin{tikzcd}
\Rep G \arrow{r}{(-)^{(1)}} & \Rep G \arrow{r}{(-)^{(1)}} & \Rep G \arrow{r}{(-)^{(1)}} & \cdots \arrow{r}{\text{union}} & \Rep(G_{\perf})\\
\overline\cT_1 \arrow{r}{(-)^{(1)}} \arrow{u} \arrow{d} & \overline\cT_2 \arrow{r}{(-)^{(1)}} \arrow{u} \arrow{d} & \overline\cT_3 \arrow{r}{(-)^{(1)}} \arrow{u} \arrow{d} & \cdots \arrow{r}{\text{union}} & \overline\cT_\infty \arrow{u} \arrow{d}\\
\overline\cT_1/\overline\cI_1 \arrow{r}{(-)^{(1)}} \arrow{d} & \overline\cT_2/\overline\cI_2 \arrow{r}{(-)^{(1)}} \arrow{d} & \overline\cT_3/\overline\cI_3 \arrow{r}{(-)^{(1)}} \arrow{d} & \cdots \arrow{r}{\text{union}} & \overline\cT_\infty/\overline\cI_\infty \arrow{d}\\
\Ver_p \arrow{r}{(-)^{(1)}}(G) & \Ver_{p^2}(G) \arrow{r}{(-)^{(1)}} & \Ver_{p^3}(G) \arrow{r}{(-)^{(1)}} & \cdots \arrow{r}{\text{union}} & \Ver_{p^\infty}(G)
\end{tikzcd}\]
Suppose there is a pseudo-tensor functor $F:\overline\cT_\infty/\overline\cI_\infty\to\cC$ for some tensor category $\cC$. For each $n$ the functor $\overline\cT_n/\overline\cI_n\to\overline\cT_\infty/\overline\cI_\infty\to\cC$ must factor through $\Ver_{p^n}(G)$, and thus $F$ factors through $\Ver_{p^\infty}(G)$ via a faithful $\bk$-linear monoidal functor $\Ver_{p^\infty}(G)\to\cC$ which is a tensor functor by \cite[Theorem~2.4.1]{quoprop}. Thus $\Ver_{p^\infty}(G)$ is the abelian envelope of $\overline\cT_\infty/\overline\cI_\infty$.
\end{proof}

\begin{remark}\label{RemFrob}
We do not need all of $\overline\cT_n$ to perform these constructions. We could define $\hat\cT_n$ to be the full Karoubi subcategory of $\Rep G$ with indecomposable objects $T(\lambda)^{(m)}$ such that $0\leq m<n$ and $\lambda\in\{0\}\cup((p^{n-m-1}-1)\rho+\Lambda^+)$, and define $\hat I_n$ to be the collection of objects with summands $T(\lambda)^{(m)}$ such that $T(\lambda)\in I_{n-m}$. It then follows from Donkin's tensor product theorem that $\hat\cT_n$ is monoidal and $\hat I_n$ is a thick tensor ideal in $\hat\cT_n$. Let $\hat J_n$ be the thick ideal in $\hat\cT_n$ with indecomposables in $J_n\cup\hat I_n$, so that $\hat J_n$ is the minimal thick ideal containing $\hat I_n$. If we define $\hat\cI_n=\hat I_n^{\rm max}$, then the category $\hat\cT_n/\hat\cI_n$ has abelian envelope $\Ver_{p^n}(G)$ by the same methods as in Theorem~\ref{ThmFullEnv}, and all of the theorems and propositions in this section still hold after replacing $\overline\cT_n,\overline I_n,\overline J_n,\overline\cI_n$ with $\hat\cT_n,\hat I_n,\hat J_n,\hat\cI_n$. In particular, there is a Frobenius functor $\hat\cT_n\to\hat\cT_{n+1}$, and one can define $\hat\cT_\infty$ and $\hat\cI_\infty$ similarly to $\overline\cT_\infty$ and $\overline\cI_\infty$. However, this approach gives a weaker statement for Proposition~\ref{PropExactSeq} below.
\end{remark}

Next we prove property (\ref{ItemExact}) in Theorem~\ref{ThmMain}.

\begin{prop}\label{PropExactSeq}
If $\Sigma:0\to X_1\to\cdots\to X_m\to0$ with $m\in\mN$ is a bounded exact sequence in $\Rep G$ such that $X_i\in\overline\cT_n$ for all $1\leq i\leq m$, then its image in $\Ver_{p^n}(G)$ is also exact.
\end{prop}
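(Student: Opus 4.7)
The strategy is to tensor $\Sigma$ with a module in $J_n(G)\setminus I_n(G)$ to land inside $\Tilt G$, invoke the fact that bounded exact complexes of tilting modules are null-homotopic, and then reflect exactness back through the tensor product. Concretely, take $Q = T(2(p^{n-1}-1)\rho)$, which is defined over $G$ since $2\rho\in\mZ\Phi\subseteq X(T)$, and which lies in $J_n(G)\setminus I_n(G)$ (writing $2(p^{n-1}-1)\rho = (p^{n-1}-1)\rho+(p^{n-1}-1)\rho+p^{n-1}\cdot 0$ with $(p^{n-1}-1)\rho\in\Lambda_{n-1}$ and $0\in A$). By Lemma~\ref{LemStStable}, for each $X_i\in\overline\cT_n$ we have $X_i\otimes Q\in J_n(G)\subseteq\Tilt G$, and since $-\otimes Q$ is exact on $\Rep G$, the complex $\Sigma\otimes Q$ is a bounded exact complex in $\Tilt G$.

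The key input is the classical fact that any bounded exact complex of tilting modules of $G$ is null-homotopic in $K^b(\Tilt G)$. This follows from the vanishing $\Ext^{>0}_{\Rep G}(T,T')=0$ between tilting modules, which makes $K^b(\Tilt G)\to D^b(\Rep G)$ fully faithful: a bounded exact complex in $\Tilt G$ maps to the zero object in $D^b(\Rep G)$, and full-faithfulness then forces its identity in $K^b(\Tilt G)$ to vanish, giving the homotopy. Applying the additive functor $F\colon\Tilt G\to\Ver_{p^n}(G)$ (well-defined by Proposition~\ref{PropWellBehaved}) preserves this null-homotopy, so $F(\Sigma)\otimes F(Q)\cong F(\Sigma\otimes Q)$ is exact in $\Ver_{p^n}(G)$.

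To pass from exactness of $F(\Sigma)\otimes F(Q)$ to exactness of $F(\Sigma)$, I invoke the general fact that in a finite tensor category tensoring with a non-zero object is a faithful exact endofunctor, and therefore reflects exactness. Since $Q\not\in I_n(G)$, its image $F(Q)$ is non-zero in $\Ver_{p^n}(G)$. Applying the exact functor $-\otimes F(Q)$ to the homology objects $H^i(F(\Sigma))$ gives
\[
H^i(F(\Sigma))\otimes F(Q)\cong H^i\bigl(F(\Sigma)\otimes F(Q)\bigr)=0,
\]
so faithfulness forces $H^i(F(\Sigma))=0$ for all $i$, which is the required exactness.

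The main obstacle is the null-homotopy statement, which rests on the $\Ext$-vanishing for tilting modules and the resulting fully faithful embedding $K^b(\Tilt G)\hookrightarrow D^b(\Rep G)$ in the highest-weight setting; this is standard in the quasi-hereditary literature and should be cited rather than reproven. A minor technicality is ensuring $Q$ lies in $\Tilt G$ itself (not merely in $\Tilt\widetilde G$), which is handled by the explicit choice of weight $2(p^{n-1}-1)\rho\in X(T)$; had we used $\St_{n-1}$ instead, extra care would be required in characteristic $2$.
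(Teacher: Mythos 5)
Your proposal is correct and follows essentially the same route as the paper: tensor $\Sigma$ with an object in $J_n\setminus I_n$ to land in $\Tilt G$, use the equivalence $K^b(\Tilt G)\simeq D^b(\Rep G)$ in a highest weight category to deduce that the resulting complex is null-homotopic, push forward through the additive functor to $\Ver_{p^n}(G)$, and then reflect exactness back via faithful exactness of tensoring with the image. The paper works with a general $S\in J_n\setminus I_n$ and phrases the intermediate step as "the morphisms are split" rather than "null-homotopy is preserved by an additive functor," but these are the same observation; your explicit choice $Q=T(2(p^{n-1}-1)\rho)$ to guarantee an object of $\Tilt G$ rather than $\Tilt\widetilde G$ is a small but sound piece of extra care that matches the remark already made in Section~\ref{SecTIJDef}.
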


\begin{proof}
For $S\in J_n\setminus I_n$, the sequence $\Sigma\otimes S:0\to X_1\otimes S\to\cdots\to X_m\otimes S\to0$ is a sequence of tilting modules. Since $\Sigma\otimes S$ is exact, it is isomorphic to the zero complex in the derived category $D^b(\Rep G)$. But since $\Rep G$ is a highest weight category, the functor $K^b(\Tilt G)\to D^b(\Rep G)$ is an equivalence (e.g. see \cite[\S 2]{Gruber}), and thus $\Sigma\otimes S$ is null-homotopic. This means the morphisms in $\Sigma\otimes S$ are all split, and thus the image of $\Sigma\otimes S$ in $\Ver_{p^n}(G)$ is exact by additivity of the functor $\Tilt G\to\Ver_{p^n}(G)$. Since the functor $-\otimes S$ in $\Ver_{p^n}(G)$ is exact and faithful, the image of $\Sigma$ in $\Ver_{p^n}(G)$ must also be exact.
\end{proof}

The following is an application of Proposition~\ref{PropExactSeq}. For an object $X$ in a symmetric tensor category, we define its symmetric and exterior powers following \cite[\S 2.1]{CEN}, with $\Sym^nX$ a quotient of $X^{\otimes n}$ and $\Lambda^nX$ a subobject of $X^{\otimes n}$. In particular, $\Sym^2X$ and $\Lambda^2X$ are the image and cokernel respectively of $1-c\in\End(X^{\otimes2})$, where $c=c_{X,X}$ is the braid morphism.

\begin{prop}\label{PropSymExt}
Let $m\in\mN$ and suppose $X$ is a $G$-module such that the modules
\[X,\quad (\Sym^2X^*)^*,\quad \Sym^rX\text{ for }2\leq r\leq m,\quad \Lambda^rX\text{ for }2\leq r\leq m\]
are all in $\overline\cT_n$. Then $\Sym^r F(X)\cong F(\Sym^r X)$ and $\Lambda^r F(X)\cong F(\Lambda^r X)$ for $r\leq m$, where $F$ is the functor $\overline\cT_n\to\Ver_{p^n}(G)$.
\end{prop}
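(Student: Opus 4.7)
The plan is to realize $\Sym^r X$ and $\Lambda^r X$ as the right-hand quotient terms of short exact sequences $0\to K\to X^{\otimes r}\to Q\to 0$ in $\Rep G$ and to transfer these to $\Ver_{p^n}(G)$ via Proposition~\ref{PropExactSeq}. Since $F$ is monoidal and therefore sends the braid $c$ to the braid, once the transferred sequences are in hand a routine diagram chase identifies $F(\Sym^r X)$ with $\Sym^r F(X)$, and similarly for $\Lambda^r$.

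The crucial input is the following closure property of $\overline\cT_n$: if $0\to A\to B\to C\to 0$ is exact in $\Rep G$ with $B,C\in\overline\cT_n$, then $A\in\overline\cT_n$. To prove it, tensor with $\St_{n-1}$ (which is exact) to get a short exact sequence in $\Rep\widetilde G$ whose middle and right-hand terms are tilting. Since tilting modules admit both Weyl and dual Weyl filtrations, and such filtrations are inherited by the kernel of a surjection between modules having them (via the standard $\Ext$-vanishing long exact sequence in the highest weight category $\Rep\widetilde G$, see \cite[\S II.4]{Jantzen}), $A\otimes\St_{n-1}$ is tilting and hence $A\in\overline\cT_n$. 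The hypothesis $(\Sym^2 X^*)^*\in\overline\cT_n$ in the proposition is precisely the $r=2$ instance of this phenomenon made explicit, since by rigidity $(\Sym^2 X^*)^*$ is canonically the kernel of $1-c$ on $X^{\otimes 2}$.

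Now, both $\Sym^r X$ and $\Lambda^r X$ can be realised as quotients of $X^{\otimes r}$ in $\Rep G$: $\Sym^r X$ as the cokernel of $\phi_r=\bigoplus_{i=1}^{r-1}(1-c_i)\colon\bigoplus X^{\otimes r}\to X^{\otimes r}$, and $\Lambda^r X$ analogously via the epi-mono factorization of the antisymmetrizer $a_r\colon X^{\otimes r}\to X^{\otimes r}$. In each case we obtain a short exact sequence $0\to K\to X^{\otimes r}\to Q\to 0$ with $Q\in\{\Sym^r X,\Lambda^r X\}$, and by the hypotheses $X^{\otimes r},Q\in\overline\cT_n$; the closure property then gives $K\in\overline\cT_n$, so Proposition~\ref{PropExactSeq} produces an exact sequence
\[
0\to F(K)\to F(X)^{\otimes r}\to F(Q)\to 0
\]
in $\Ver_{p^n}(G)$. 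Applying the closure-plus-transfer argument once more to the epi-mono factorization of $\phi_r$ (respectively $a_r$) identifies $F(K)$ with the analogous kernel for $F(X)$ computed inside $\Ver_{p^n}(G)$; monoidality of $F$ ensures the defining morphisms are preserved. The uniqueness of cokernels then forces $F(\Sym^r X)\cong\Sym^r F(X)$ and $F(\Lambda^r X)\cong\Lambda^r F(X)$.

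The main obstacle is the closure property of $\overline\cT_n$, whose proof hinges on the highest-weight-category fact that Weyl (and dual Weyl) filtrations pass to kernels of surjections between modules carrying such filtrations. Once that is granted, everything else is a formal manipulation of epi-mono factorizations and the monoidality of $F$.
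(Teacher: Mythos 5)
Your proposed proof rests on a ``closure property'' for $\overline\cT_n$: if $0\to A\to B\to C\to 0$ is exact with $B,C\in\overline\cT_n$, then $A\in\overline\cT_n$. This property is \emph{false}, and its failure is the decisive gap in your argument. After tensoring with $\St_{n-1}$ you have a short exact sequence of $\widetilde G$-modules with middle and right-hand terms tilting, and you appeal to the fact that Weyl and dual Weyl filtrations pass to the kernel. But the two filtrations behave asymmetrically. The long exact sequence against $\nabla(\lambda)$ does show that a $\Delta$-filtration on $B$ and $C$ forces one on $A$, since $\Ext^{\geq1}(B,\nabla)=0=\Ext^{\geq1}(C,\nabla)$ kills $\Ext^1(A,\nabla)$. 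The long exact sequence against $\Delta(\lambda)$, however, gives
\[
\Hom(\Delta(\lambda),B)\to\Hom(\Delta(\lambda),C)\to\Ext^1(\Delta(\lambda),A)\to\Ext^1(\Delta(\lambda),B)=0,
\]
so $\Ext^1(\Delta(\lambda),A)$ is a cokernel that need not vanish: nothing forces $\Hom(\Delta(\lambda),B)\to\Hom(\Delta(\lambda),C)$ to be surjective. A concrete counterexample already occurs for $G=\SL_2$ and $n=1$: for $p\leq a\leq 2p-2$ the tilting module $T_a$ sits in a short exact sequence $0\to\Delta_a\to T_a\to T_{2p-2-a}\to 0$ (since $T_a$ has Weyl filtration $[\Delta_a,\Delta_{2p-2-a}]$ with $\Delta_{2p-2-a}=L_{2p-2-a}=T_{2p-2-a}$). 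Both $T_a$ and $T_{2p-2-a}$ are in $\overline\cT_1=\Tilt\SL_2$, but $\Delta_a$ is not, precisely because $\Ext^1(L_{2p-2-a},\Delta_a)\neq0$ obstructs a $\nabla$-filtration.

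The falsity of the closure property is also visible in the statement of the proposition itself: if the kernel of a surjection between objects of $\overline\cT_n$ automatically lay in $\overline\cT_n$, then the hypotheses that $(\Sym^2 X^*)^*$ and \emph{all} of $\Sym^rX$, $\Lambda^rX$ for $2\leq r\leq m$ lie in $\overline\cT_n$ would be largely redundant; the proposition would follow from a much shorter list. The paper's proof sidesteps the problem by working only with exact sequences \emph{all} of whose terms are already among the hypothesised members of $\overline\cT_n$: the four-term sequence $\Sigma_1$ defining $\Sym^2$, the three-term sequence $\Sigma_2$ defining $\Lambda^2$, the right-exact sequence $\Sigma_3$ relating $\Lambda^2\otimes\Sym^{r-2}$, $X\otimes\Sym^{r-1}$ and $\Sym^r$, and the Koszul complex. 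Proposition~\ref{PropExactSeq} applies to each because every term is assumed in $\overline\cT_n$, and then an induction on $r$ identifies the structure morphisms under $F$ using monoidality and the universal properties of $\Sym$ and $\Lambda$. That identification step is also elided in your write-up (``the closure-plus-transfer argument once more''), but the more serious issue is the false closure property, without which Proposition~\ref{PropExactSeq} cannot be invoked on your short exact sequences at all, since their kernels are not known to be in $\overline\cT_n$.
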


\begin{proof}
For an object $X$ in a symmetric tensor category $\cC$ and $r\geq3$, we have exact sequences
\begin{align*}
\Sigma_1(\cC,X):&\ 0\to(\Sym^2X^*)^*\to X^{\otimes 2}\xrightarrow{1-c}X^{\otimes 2}\to\Sym^2 X\to0\\
\Sigma_2(\cC,X):&\ 0\to\Lambda^2X\to X^{\otimes 2}\to\Sym^2 X\to0\\
\Sigma_3(\cC,X,r):&\ \Lambda^2X\otimes\Sym^{r-2}X\xrightarrow{f_{X,r}} X\otimes\Sym^{r-1}X\xrightarrow{g_{X,r}}\Sym^rX\to0
\end{align*}
(see the proof of \cite[Lemma~5.2.3]{CEN} for $\Sigma_3$). Taking $\cC=\Rep G$ and $X$ satisfying the conditions in the proposition, $F(\Sigma_1(\Rep G,X))$ and $F(\Sigma_2(\Rep G,X))$ are exact sequences in $\Ver_{p^n}(G)$ by Proposition~\ref{PropExactSeq}. Moreover, $\Sigma_3$ is part of a Koszul complex
\[0\to\Lambda^rX\to\Lambda^{r-1}X\otimes X\to\Lambda^{r-2}\otimes\Sym^2X\to\cdots\to\Sym^rX\to0\]
which is exact in Tannakian categories such as $\Rep G$, and thus $F(\Sigma_3(\Rep G,X,r))$ is also exact by Proposition~\ref{PropExactSeq}. Since $F$ is braided monoidal, it sends the morphism $1-c$ in $\overline\cT_n$ to $1-c$ in $\Ver_{p^n}(G)$, and hence $F(\Sigma_1(\Rep G,X))\cong\Sigma_1(\Ver_{p^n}(G),F(X))$ as complexes in $\Ver_{p^n}(G)$. This means $\Sym^2 F(X)\cong F(\Sym^2X)$, and moreover $F$ sends the projection $X^{\otimes2}\to\Sym^2X$ to the projection $F(X)^{\otimes 2}\to\Sym^2F(X)$, which allows us to conclude that $F(\Sigma_2(\Rep G,X))\cong\Sigma_2(\Ver_{p^n}(G),F(X))$ and $\Lambda^2 F(X)\cong F(\Lambda^2X)$.

Now we consider the morphisms $f_{X,r}$ and $g_{X,r}$ in $\Sigma_3$. We use induction on $r$ and assume that $F(g_{X,r-1})=g_{F(X),r-1}$, with the base case $r=2$ following from the reasoning above. This means $F(f_{X,r})=f_{F(X),r}$, since $F$ is braided monoidal and $f_{X,r}$ is the composition of $\id_X\otimes g_{X,r-1}$ with the inclusion $\Lambda^2X\hookrightarrow X^{\otimes2}$ tensored with $\Sym^{r-2}X$. This allows us to conclude that $F(\Sigma_3(\Rep G,X,r))\cong\Sigma_3(\Ver_{p^n}(G),F(X),r)$, and hence $\Sym^r F(X)\cong F(\Sym^r X)$. To complete the induction and prove that $F(g_{X,r})=g_{F(X),r}$, note that $g_{X,r}$ is the unique morphism whose composition with $X^{\otimes r}\twoheadrightarrow X\otimes\Sym^{r-1}X$ is the projection $X^{\otimes r}\twoheadrightarrow\Sym^rX$, by the universal property of $\Sym^{r-1}X$. $F(g_{X,r})$ satisfies this property for $F(X)$ and hence must equal $g_{F(X),r}$. A dual argument (swapping $\Sym$ with $\Lambda$ and reversing the arrows in $\Sigma_3$) gives $\Lambda^r F(X)\cong F(\Lambda^r X)$.
\end{proof}

\begin{remark}
An immediate application of Proposition~\ref{PropSymExt} is a faster proof of \cite[Corollary~6.4]{cohomology} which states that $\Sym^rL_1=0$ in $\Ver_{p^n}$ if and only if $r\geq p^n-1$. In particular, in $\Rep\SL_2$ we have $\Lambda^2L_1=\unit$ and $\Sym^rL_1=\Delta_r^*\in\overline\cT_n$ for $0\leq r\leq p^n-2$, and then $\Sym^{p^n-1}L_1=\St_n\in I_n(\SL_2)$. Another application is a derivation of the symmetric and exterior powers of simples in $\Ver_9$, confirming the computational results in \cite[\S 5.1.2]{CEN}. It can be computed that both $\Lambda^rL_2$ and $\Sym^rL_2$ are in $\overline\cT_2(\SL_2)$ in characteristic 3 for $0\leq r\leq 7$, and we have $\Lambda^4L_2=0$ and $\Sym^7L_2\in\overline I_2(\SL_2)$. The simples $L_0$ and $L_3$ are the images of the invertible objects in $\Ver_3\simeq\sVec$, and then we obtain the symmetric and exterior powers for the other simples using $L_4\cong L_1\otimes L_3$ and $L_5\cong L_2\otimes L_3$ in $\Ver_9$.
\end{remark}

\section{Alternative descriptions of $\Ver_{p^n}(\SL_2)$}\label{SecVerpnSL2}

The construction of $\Ver_{p^n}(G)$ via $\overline\cT_n(G)$ also reveals new properties of $\Ver_{p^n}=\Ver_{p^n}(\SL_2)$. In particular, we will show that $\Ver_{p^n}$ is equivalent to a particular subcategory of $\Rep\SL_2$ as an abelian category, and can also be expressed as a Serre quotient. First we state some properties of tilting modules unique to the case $G=\SL_2$.

\begin{lemma}\label{LemSL2Ideals}
For a morphism $f$ in $\Tilt\SL_2$ (respectively $\overline\cT_n(\SL_2)$), we have $f\in\cI_n(\SL_2)$ (respectively $\overline\cI_n(\SL_2)$) if and only if the restriction of $f$ (respectively $f\otimes\id_{\St_{n-1}}$) to any indecomposable summands not in $I_n(\SL_2)$ is zero.
\end{lemma}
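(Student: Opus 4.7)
The plan is to reduce both statements to the explicit description of $\cI_n(\SL_2)$ recalled in Section~\ref{SecSL2Tensor}, together with Lemma~\ref{LemMorFactor}.

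For the first statement, decompose $X = X' \oplus X''$ and $Y = Y' \oplus Y''$ where $X', Y'$ collect the indecomposable summands lying in $I_n(\SL_2)$ and $X'', Y''$ collect those outside $I_n(\SL_2)$. Writing $f$ as a block matrix $\sM{f_{11} & f_{12} \\ f_{21} & f_{22}}$ accordingly, the statement is equivalent to the assertion $f \in \cI_n(\SL_2) \iff f_{22} = 0$. The direction $(\Rightarrow)$ is a direct reformulation of the description of $\cI_n(\SL_2)$ recalled from \cite[Theorem~5.3.1]{tensorideals1} and \cite[Proposition~3.5]{BEO} in Section~\ref{SecSL2Tensor}. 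For the direction $(\Leftarrow)$, under the hypothesis $f_{22}=0$ one factors $f = \alpha\beta$ with $\beta = \sM{1_{X'} & 0 \\ 0 & f_{12}}: X \to X' \oplus Y'$ and $\alpha = \sM{f_{11} & 1_{Y'} \\ f_{21} & 0}: X' \oplus Y' \to Y$, exhibiting $f$ as factoring through $X' \oplus Y' \in I_n(\SL_2)$; hence $f \in I_n(\SL_2)^{\rm min} \subseteq \cI_n(\SL_2)$.

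For the second statement, apply Lemma~\ref{LemMorFactor} with $S = \St_{n-1} \in J_n(\SL_2) \setminus I_n(\SL_2)$: $f \in \overline\cI_n(\SL_2)$ iff $f \otimes \id_{\St_{n-1}}$ factors through an object of $I_n(\SL_2)$. Since both source and target of $f \otimes \id_{\St_{n-1}}$ lie in $J_n(\SL_2)$ by Lemma~\ref{LemStStable}, this morphism lies in $\cT_n(\SL_2)$, so Proposition~\ref{PropUniqueIdeal} identifies factorization through $I_n(\SL_2)$ with membership in $\cI_n(\SL_2)$. Applying the first statement of the lemma to $f \otimes \id_{\St_{n-1}}$ then delivers the stated characterization.

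The main (mild) obstacle is reconciling the two presentations of $\cI_n(\SL_2)$ that are used: the \emph{supported on summands} description as a tensor ideal in $\Tilt\SL_2$ inherited from \cite{tensorideals1,BEO}, and the definition as $I_n(\SL_2)^{\rm max}$ as a tensor ideal in $\cT_n(\SL_2)$ employed throughout Section~\ref{SecDefinition}. These agree by Proposition~\ref{PropWellBehaved} combined with the chain classification of thick tensor ideals in $\Tilt\SL_2$ recalled in Example~\ref{ExSL2SL3}, so no extra argument beyond the factorization above and Lemma~\ref{LemMorFactor} is required.
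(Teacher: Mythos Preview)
Your proposal is correct and follows essentially the same route as the paper's proof, which simply cites \cite[Theorem~5.3.1]{tensorideals1} or \cite[Proposition~3.5]{BEO} together with Lemma~\ref{LemMorFactor}. You have unpacked these citations: the first statement is literally the ``supported on summands'' description of $\cI_n(\SL_2)$ recalled in Section~\ref{SecSL2Tensor} (so your explicit factorisation for $(\Leftarrow)$, while correct, is not strictly needed), and the second statement is Lemma~\ref{LemMorFactor} combined with the first, exactly as the paper indicates.
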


\begin{proof}
See \cite[Theorem~5.3.1]{tensorideals1} or \cite[Proposition~3.5]{BEO}, plus Lemma~\ref{LemMorFactor}.
\end{proof}

\begin{lemma}\label{LemSimpTilt}
For $i\leq p^n-2$ and $S\in J_n(\SL_2)\setminus I_n(\SL_2)$, we have that $L_i\otimes S$ is a tilting module, and $L_i\otimes S\in I_n$ if and only if $i\geq p^{n-1}(p-1)$.
\end{lemma}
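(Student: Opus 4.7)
The plan is to prove both parts by explicitly decomposing $L_i \otimes \St_{n-1}$ into indecomposable tilting summands via Steinberg's and Donkin's tensor product theorems; the passage to arbitrary $S \in J_n \setminus I_n$ is then handled by the ``equivalently all $S$'' clause in the definition of $\overline{\cT}_n(\SL_2)$ and $\overline{I}_n(\SL_2)$ in Section~\ref{SecBarDef}.

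Concretely, I would first write $i = i_0 + jp^{n-1}$ with $0 \leq i_0 \leq p^{n-1} - 1$ and $0 \leq j \leq p - 1$, noting that the hypothesis $i \leq p^n - 2$ forces $i_0 \leq p^{n-1} - 2$ whenever $j = p - 1$. Steinberg's tensor product theorem then gives $L_i \cong L_{i_0} \otimes L_j^{(n-1)}$. Since $2h - 4 = 0$ for $\SL_2$, the conjecture [DFilt $\Rightarrow$] applies and $L_{i_0} \otimes \St_{n-1}$ is tilting. Because $\St_{n-1}$ is projective as a $G_{n-1}T$-module, so is $L_{i_0} \otimes \St_{n-1}$; combined with the upper weight bound $i_0 + p^{n-1} - 1 \leq 2p^{n-1} - 2$, this forces every indecomposable summand to be of the form $T_{p^{n-1} - 1 + a}$ for some $a \in \{0, \dots, p^{n-1} - 1\}$.

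Since $j \leq p - 1$ we have $L_j = T_j$, and Donkin's tensor product theorem gives $T_{p^{n-1} - 1 + a} \otimes T_j^{(n-1)} \cong T_{p^{n-1} - 1 + a + jp^{n-1}}$ for each such $a$; tensoring the decomposition above by $L_j^{(n-1)}$ thus realises $L_i \otimes \St_{n-1}$ as a direct sum of such tilting modules, proving the first claim for $S = \St_{n-1}$ and hence for all $S \in J_n \setminus I_n$ via Lemma~\ref{LemThickIdeal}. For the second claim, the summand highest weights $p^{n-1} - 1 + a + jp^{n-1}$ lie in $[0, p^n - 2]$ when $j \leq p - 2$ and in $[p^n - 1, \infty)$ when $j = p - 1$; since $I_n(\SL_2)$ consists of objects whose summands have weight at least $p^n - 1$, this yields $L_i \otimes \St_{n-1} \in I_n$ iff $j = p - 1$ iff $i \geq (p-1)p^{n-1}$. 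The one thing to double-check carefully is that Donkin's range hypothesis $\lambda \in [p^{n-1} - 1, 2p^{n-1} - 2]$ is satisfied for every summand, but this is immediate from $a \leq p^{n-1} - 1$; the rest is just bookkeeping between the two tensor product theorems.
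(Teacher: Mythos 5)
Your proof is correct, and it takes a genuinely different route from the paper's. The paper simply cites a lemma of Coulembier (\cite[Lemma~4.3.4]{MAE}) for the fact that $L_i\otimes\St_{n-1}$ is tilting, then for ``not in $I_n$'' checks that the highest weight $i+p^{n-1}-1<p^n-1$ when $i<(p-1)p^{n-1}$, and for ``in $I_n$'' applies Steinberg's tensor product theorem once to peel off a factor $L_{p-1}^{(n-1)}$ and absorb it into $\St_n\in I_n$. Your argument instead re-derives the tilting statement from scratch by factoring $L_i\cong L_{i_0}\otimes L_j^{(n-1)}$, invoking the [DFilt $\Rightarrow$] result for the $p^{n-1}$-restricted factor, and then pinning down the indecomposable summands of $L_{i_0}\otimes\St_{n-1}$ exactly (via projectivity over $G_{n-1}T$ together with the upper weight bound $2p^{n-1}-2$) before applying Donkin's tensor product theorem to each summand. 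What this buys you is an explicit list of the summands $T_{p^{n-1}-1+a+jp^{n-1}}$ appearing in $L_i\otimes\St_{n-1}$, which is stronger than the lemma requires; what it costs is that the chain of ingredients (Steinberg, [DFilt $\Rightarrow$], Jantzen's Lemma~E.8, Donkin) is longer than the paper's two-line case split, and the case $n=1$ degenerates slightly awkwardly (forcing $i_0=0$). Both proofs are sound; the reduction to $S=\St_{n-1}$ via Lemma~\ref{LemThickIdeal} is handled the same way in both.
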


\begin{proof}
Since $T_j$ is a summand of $T_{j-p^{n-1}-1}\otimes\St_{n-1}$ for any indecomposable $T_j\in J_n\setminus I_n$, we can assume $S=\St_{n-1}$ without loss of generality. Then $L_i\otimes\St_{n-1}$ is a tilting module by \cite[Lemma~4.3.4]{MAE}. If $i<p^{n-1}(p-1)$ then $L_i\otimes\St_{n-1}$ has highest weight less than $p^n-1$ and thus cannot be in $I_n$. Conversely, if $p^{n-1}(p-1)\leq i\leq p^n-2$ then
\[L_i\otimes\St_{n-1}\cong L_{i-p^{n-1}(n-1)}\otimes L_{p-1}^{(n-1)}\otimes\St_{n-1}\cong L_{i-p^{n-1}(n-1)}\otimes\St_n\in I_n.\]
\end{proof}

\begin{remark}\label{RemNonFaithful}
These two lemmas do not have equivalents in other algebraic groups, so it is unclear whether the results below can be extended to general $\Ver_{p^n}(G)$. Even for objects $X,Y\in J_n(G)\setminus I_n(G)$, Lemma~\ref{LemSimpTilt} may fail for some of the simple factors of $X$ or $Y$, and $\Hom_G(X,Y)\to\Hom_{\Ver_{p^n}(G)}(X,Y)$ is not necessarily injective so Lemma~\ref{LemSL2Ideals} fails as well. For example, for $G=\SL_3$ the tilting module $T((p-2)\rho)$ has $\unit$ in its socle, meaning there is a non-trivial morphism
\[f:\unit\to T(2(p-1)\rho)\otimes T((p-2)\rho)^{(1)}=T((p^2-2)\rho)\in I_2.\]
But $T((p^2-2)\rho)$ is a direct summand of $X\otimes X^*$ where $X=T((p^2-p-2)\varpi_1+p\varpi_2)$ is an object in $J_2\setminus I_2$ (in Figure~\ref{FigSL3}, $X$ is in one of the triangular protrusions at the top of the cell $J_2\setminus I_2$), so we have a non-zero morphism
\[X\xrightarrow{f\otimes\id_X} T((p^2-2)\rho)\otimes X\hookrightarrow X\otimes X^*\otimes X\xrightarrow{\id_X\otimes\ev_X}X\]
which is in $\cI_2(\SL_3)$.
\end{remark}

\subsection{} We define the following full subcategories of $\Rep\SL_2$:
\begin{enumerate}
\item $\cA_n$ consists of objects with weights strictly less than $p^n-1$;
\item $\cB_n$ consists of objects whose simple composition factors have highest weights $i$ with $(p-1)p^{n-1}\leq i<p^{n-1}-1$;
\item $\cC_n$ consists of objects $X\in\cA_n$ with $\Hom(X,B)=0=\Hom(B,X)$ for all $B\in\cB_n$.
\end{enumerate}
$\cB_n$ is a Serre subcategory of $\cA_n$, and $\cA_n$ is a Serre subcategory of $\Rep\SL_2$. Moreover, $\cA_n$ is a subcategory of $\overline\cT_n(\SL_2)$ by Lemma~\ref{LemSimpTilt}, and all indecomposables in $J_n(\SL_2)\setminus I_n(\SL_2)$ are in $\cA_n$. We will see that $\cC_n$ is abelian, but the inclusion $\cC_n\to\Rep\SL_2$ is not exact (Remark~\ref{RemNonExact}).

\begin{lemma}\label{LemAnBn}
We have $\Ob(\cB_n)=\overline I_n\cap\Ob(\cA_n)$, and for any morphism $f$ in $\cA_n$ we have $f\in\overline\cI_n$ if and only if the image $\im(f)$ is an object in $\cB_n$.
\end{lemma}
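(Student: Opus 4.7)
The plan is to let $S = \St_{n-1}$ and reduce everything to analyzing the decomposition of $X \otimes S$ for $X \in \cA_n$. The key preliminary observation is that since every composition factor of such $X$ is some $L_i$ with $i \leq p^n - 2$, each $L_i \otimes S$ is a tilting module by Lemma~\ref{LemSimpTilt}. Because $\Ext^1$ between tilting modules vanishes (they have compatible $\Delta$- and $\nabla$-filtrations in the quasi-hereditary category $\Rep \SL_2$), any short exact sequence in $\cA_n$ tensored with $S$ splits. Inductively on composition length we therefore obtain
\[X \otimes S \;\cong\; \bigoplus_{j} L_{i_j} \otimes S,\]
where $L_{i_1}, \dots, L_{i_k}$ are the composition factors of $X$.

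For the first statement, Lemma~\ref{LemSimpTilt} tells us $L_i \otimes S \in I_n$ iff $i \geq (p-1)p^{n-1}$, so the decomposition above implies $X \otimes S \in I_n$ iff every composition factor $L_{i_j}$ of $X$ satisfies $i_j \geq (p-1)p^{n-1}$, which is exactly the condition $X \in \cB_n$. This gives $\Ob(\cB_n) = \overline I_n \cap \Ob(\cA_n)$.

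For the morphism statement, one direction is easy: if $\im(f) \in \cB_n$, then $\id_{\im(f)} \in \overline\cI_n$ by the first part, and since $f$ factors through $\im(f)$ the fact that $\overline\cI_n$ is a tensor ideal gives $f \in \overline\cI_n$. The harder direction assumes $f \in \overline\cI_n$ and requires showing $\im(f) \in \cB_n$. Here the plan is to exploit that tensoring with $S$ is exact and that the three modules $\ker(f), \im(f), \coker(f)$ all lie in $\cA_n$, so all three become tilting after tensoring with $S$. Using the splitting of $\Ext^1$-sequences between tiltings again, one gets direct sum decompositions
\[X \otimes S \;\cong\; (\ker f \otimes S)\oplus(\im f \otimes S), \qquad Y \otimes S \;\cong\; (\im f \otimes S)\oplus(\coker f \otimes S),\]
under which $f \otimes \id_S$ becomes the canonical projection-inclusion on the $\im(f) \otimes S$ summand.

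The main obstacle is to combine this decomposition with Lemma~\ref{LemSL2Ideals}. Assume for contradiction that $\im(f) \notin \cB_n$. By the first part, $\im(f) \otimes S \notin I_n$, so it has at least one indecomposable tilting summand $T \notin I_n$. Refining the decomposition above into indecomposables, $T$ appears both as a summand of $X \otimes S$ (in the $\im(f) \otimes S$ piece) and as a summand of $Y \otimes S$, and the matrix form of $f \otimes \id_S$ restricts to the identity $T \xrightarrow{\sim} T$ between these summands. This is a non-zero component of $f \otimes \id_S$ between two indecomposable summands not in $I_n$, which by Lemma~\ref{LemSL2Ideals} contradicts $f \otimes \id_S \in \cI_n(\SL_2)$ and hence contradicts $f \in \overline\cI_n$ (via Lemma~\ref{LemMorFactor}). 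The only subtlety to check carefully is that the splitting isomorphisms genuinely identify the same indecomposable $T$ on both sides, which follows from the uniqueness of Krull-Schmidt decomposition applied to $\im(f) \otimes S$.
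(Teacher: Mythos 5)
Your proof is correct and follows essentially the same strategy as the paper's: reduce to the tilting module $X\otimes\St_{n-1}$, use the vanishing of $\Ext^1$ between tilting modules to split all relevant short exact sequences after tensoring with $\St_{n-1}$, and then read off membership in $\overline I_n$ and $\overline\cI_n$ via Lemmas~\ref{LemSimpTilt} and \ref{LemSL2Ideals}. The only cosmetic difference is that the paper splits along a Jordan--Hölder filtration compatible with $\ker f\subseteq X$ and $\im f\subseteq Y$, whereas you split along the two short exact sequences $0\to\ker f\to X\to\im f\to 0$ and $0\to\im f\to Y\to\coker f\to 0$ directly; the underlying mechanism and the final appeal to Lemma~\ref{LemSL2Ideals} are identical.
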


\begin{proof}
Given $X\in\cA_n$ fix a Jordan-H\"older filtration $0=X_0\subseteq X_1\subseteq\cdots\subseteq X_m=X$, meaning a filtration such that $X_i/X_{i-1}$ is simple for all $i$. By Lemma~\ref{LemSimpTilt}, \cite[Corollary~E.2]{Jantzen}, and induction on $i$, the sequence
\[0\to X_{i-1}\otimes\St_{n-1}\to X_i\otimes\St_{n-1}\to(X_i/X_{i-1})\otimes\St_{n-1}\to0\]
is a sequence of tilting modules and hence is split. Thus, the module $X\otimes\St_{n-1}$ splits into a direct sum $\bigoplus_{i=1}^m(X_i/X_{i-1})\otimes\St_{n-1}$. It then follows from the second statement of Lemma~\ref{LemSimpTilt} that $\Ob(\cB_n)=\overline I_n\cap\Ob(\cA_n)$. Now suppose $f:X\to Y$ is a morphism in $\cA_n$, and fix Jordan-H\"older filtrations $X_i,Y_j$ of $X$ and $Y$ so that $X/X_i=\im(f)=Y_j$ for some $i,j$. By the same reasoning as above, $f\otimes\id_{\St_{n-1}}$ is a split morphism, and by Lemma~\ref{LemSL2Ideals} it factors through an object in $I_n$ if and only if $\im(f)\otimes\St_{n-1}\in I_n$ if and only if $\im(f)\in\cB_n$.
\end{proof}

\begin{lemma}\label{LemTopSocle}
Suppose $X\in\cA_n$.
\begin{enumerate}
\item $\Hom(X,B)=0$ for all $B\in\cB_n$ if and only if there exists a surjective morphism $P\twoheadrightarrow X$ for some $P\in J_n\cap\Ob(\cA_n)$.
\item $\Hom(B,X)=0$ for any $B\in\cB_n$ if and only if there exists an injective morphism $X\hookrightarrow Q$ for some $Q\in J_n\cap\Ob(\cA_n)$.
\end{enumerate}
In particular, $X\in\cC_n$ if and only if $X$ is the image of some morphism $P\to Q$ for some objects $P,Q\in J_n\cap\Ob(\cA_n)$.
\end{lemma}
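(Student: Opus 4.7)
The plan is to prove (1) first, deduce (2) from (1) by duality (using self-duality of $\SL_2$-modules), and then combine both to establish the final equivalence.

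For the $(\Leftarrow)$ direction of (1): I would first observe that every indecomposable tilting module $T_i\in J_n\cap\Ob(\cA_n)$, i.e.\ $p^{n-1}-1\le i\le p^n-2$, has simple top of highest weight strictly less than $(p-1)p^{n-1}$. Writing $i=i_0+bp^{n-1}$ with $i_0\in[p^{n-1}-1,\,2p^{n-1}-2]$ and $b\in[0,p-2]$, Donkin's tensor product theorem gives $T_i\cong T_{i_0}\otimes L_b^{(n-1)}$, Lemma~\ref{LemTiltCover} identifies $\topp(T_{i_0})=L_{2p^{n-1}-2-i_0}$, and Steinberg yields $\topp(T_i)=L_{2p^{n-1}-2-i_0+bp^{n-1}}$, whose highest weight lies in $[0,(p-1)p^{n-1})$. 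Given a surjection $P\twoheadrightarrow X$ with $P$ a sum of such $T_i$'s, any morphism $X\to B$ with $B\in\cB_n$ would compose to a morphism $P\to B$; its image is simultaneously a quotient of $P$ (so its simple top factors have highest weights $<(p-1)p^{n-1}$) and a subobject of $B$ (so all its composition factors have highest weights $\ge(p-1)p^{n-1}$), forcing the image to be zero.

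For the main direction $(\Rightarrow)$ of (1), I would invoke the abelian envelope. First verify $\cA_n\subseteq\overline\cT_n(\SL_2)$: by Lemma~\ref{LemSimpTilt} each simple $L_j\in\cA_n$ has $L_j\otimes\St_{n-1}$ tilting, and since $\Ext^1$ between tilting modules vanishes (\cite[Corollary~E.2]{Jantzen}), any extension splits, so $X\otimes\St_{n-1}$ is tilting for $X\in\cA_n$. Because $\Ver_{p^n}$ is a finite tensor category with enough projectives, $F(X)$ admits a projective cover; Proposition~\ref{PropSimps} identifies the indecomposable projectives of $\Ver_{p^n}$ as images of the $T_i$ with $i\in[p^{n-1}-1,p^n-2]$, so this cover is realised by some $P\in J_n\cap\Ob(\cA_n)$. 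By the construction of the abelian envelope recalled in Section~\ref{SecAbEnv}, $\Hom_{\Ver_{p^n}}(F(P),F(X))=\Hom_{\Rep\SL_2}(P,X)/\overline\cI_n(P,X)$, so the surjection $F(P)\twoheadrightarrow F(X)$ lifts to some $f\colon P\to X$ in $\Rep\SL_2$.

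To conclude that $f$ is surjective, note that $\cA_n$ is a Serre subcategory of $\Rep\SL_2$, so $\coker(f)\in\cA_n\subseteq\overline\cT_n$; Proposition~\ref{PropExactSeq} applied to $P\to X\to\coker(f)\to 0$ yields $F(\coker f)=\coker(F(f))=0$, whence $\coker(f)\in\overline I_n\cap\Ob(\cA_n)=\Ob(\cB_n)$ by Lemma~\ref{LemAnBn}. The hypothesis $\Hom(X,\cB_n)=0$ then forces the canonical projection $X\twoheadrightarrow\coker(f)$ to vanish, so $\coker(f)=0$ and $f$ is surjective.

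Part (2) follows from (1) by duality: for $\SL_2$ we have $L_i^*\cong L_i$, so $\cA_n$, $\cB_n$ and $J_n\cap\Ob(\cA_n)$ are all closed under taking duals, and $\Hom(B,X)\cong\Hom(X^*,B^*)\cong\Hom(X^*,B)$. Applying (1) to $X^*$ produces a surjection $P\twoheadrightarrow X^*$, dually an injection $X\hookrightarrow P^*$ with $P^*\in J_n\cap\Ob(\cA_n)$. The ``in particular'' clause then follows immediately: $X\in\cC_n$ iff $X$ is simultaneously a quotient of some $P\in J_n\cap\Ob(\cA_n)$ and a subobject of some $Q\in J_n\cap\Ob(\cA_n)$, iff $X$ equals the image of the composite $P\twoheadrightarrow X\hookrightarrow Q$. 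The main obstacle is the $(\Rightarrow)$ direction of (1), which hinges on orchestrating four ingredients at once: the inclusion $\cA_n\subseteq\overline\cT_n$, the existence of projective covers in $\Ver_{p^n}$, the fullness of $F$ on morphisms from projectives, and exactness of $F$ on sequences inside $\cA_n$ — the latter three coming from the abelian envelope machinery developed in Sections~\ref{SecAbEnv}--\ref{SecExpansion}.
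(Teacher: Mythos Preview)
Your proof is correct and reaches the same conclusion as the paper, but via a somewhat different route in both directions of (1).

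For $(\Leftarrow)$, the paper argues abstractly: since $F(B)=0$ for $B\in\cB_n$ by Lemma~\ref{LemAnBn}, and $\Hom(P,B)\cong\Hom(F(P),F(B))$ by \cite[Proposition~2.24]{BEO}, one gets $\Hom(P,B)=0$ immediately, whence $\Hom(X,B)=0$ whenever $P\twoheadrightarrow X$. Your argument instead computes the top of each $T_i\in J_n\cap\Ob(\cA_n)$ explicitly via Donkin and Lemma~\ref{LemTiltCover}. This is more hands-on and avoids invoking the envelope for this direction, at the cost of a short combinatorial check; the paper's version is slicker but leans on the functorial machinery from the outset.

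For $(\Rightarrow)$, both proofs lift a projective cover $F(P)\twoheadrightarrow F(X)$ to a map $f\colon P\to X$ in $\Rep\SL_2$ and then argue that $\coker(f)\in\cB_n$. The paper does this by contradiction: if $Y=\coker(f)\notin\cB_n$ then $F(Y)\neq 0$, yet $F(g)=0$ for the cokernel projection $g$ (since $F(f)$ is epi and $g f=0$), and this clashes with injectivity of $\Hom(P,Y)\to\Hom(F(P),F(Y))$. Your route through Proposition~\ref{PropExactSeq} is arguably cleaner, but note a small imprecision: the sequence $P\to X\to\coker(f)\to 0$ is not bounded on the left, so to apply Proposition~\ref{PropExactSeq} you should extend it to $0\to\ker(f)\to P\to X\to\coker(f)\to 0$, observing that $\ker(f)\in\cA_n\subseteq\overline\cT_n$ since $\cA_n$ is Serre. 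With that adjustment your argument goes through and yields $F(\coker f)=\coker(F(f))=0$ directly.

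The duality argument for (2) and the deduction of the ``in particular'' clause match the paper's treatment.
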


\begin{proof}
We prove statement (1), and (2) follows from a dual argument. Let $F$ be the functor $\cA_n\to\Ver_{p^n}$, and recall from \cite[Proposition~2.24]{BEO} that $\Hom(P,X)\to\Hom(F(P),F(X))$ is an isomorphism for any $X\in\cA_n$ and $P\in J_n\cap\Ob(\cA_n)$. Since $B\in\overline\cI_n$ for $B\in\cB_n$ by Lemma~\ref{LemAnBn}, $F(B)=0$ and thus $\Hom(P,B)\cong\Hom(F(P),0)=0$. Thus if there is a surjection $P\twoheadrightarrow X$, then $\Hom(X,B)=0$. For the converse, let $f:P'\twoheadrightarrow F(X)$ be a projective cover in $\Ver_{p^n}$ and take $P$ to be a preimage of $P'$. Suppose for a contradiction that the preimage of $f$ in $\cA_n$ is not surjective, and has a non-zero cokernel $g:X\twoheadrightarrow Y$. If $\Hom(X,B)=0$ for all $B\in\cB_n$, then $Y\not\in\cB_n$ and $F(Y)\not\cong0$. But then $F(g)=0$ since $f$ is an epimorphism and $gf=0$, contradicting injectivity of $\Hom(P,Y)\to\Hom(F(P),F(Y))$. Thus the preimage of $f$ is a surjection $P\twoheadrightarrow X$.
\end{proof}

\begin{lemma}\label{LemCleanIso}
For $X,Y\in\cC_n$, the map $\Hom_{\SL_2}(X,Y)\to\Hom_{\Ver_{p^n}}(X,Y)$ is an isomorphism.
\end{lemma}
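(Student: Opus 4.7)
The plan is to prove injectivity and surjectivity separately, leveraging the previously established facts that (i) $X\in\cC_n$ admits both a surjection $P\twoheadrightarrow X$ and an injection $X\hookrightarrow Q$ with $P,Q\in J_n\cap\Ob(\cA_n)$ (Lemma~\ref{LemTopSocle}); (ii) for $P\in J_n\cap\Ob(\cA_n)$, the map $\Hom_{\SL_2}(P,Z)\to\Hom_{\Ver_{p^n}}(F(P),F(Z))$ is an isomorphism for any $Z\in\cA_n$ (used in the proof of Lemma~\ref{LemTopSocle} via \cite[Proposition~2.24]{BEO}); (iii) a morphism $f$ in $\cA_n\subseteq\overline\cT_n$ is sent to zero by $F$ if and only if $\im(f)\in\cB_n$ (Lemma~\ref{LemAnBn}); and (iv) short exact sequences in $\cA_n$ remain exact under $F$ (Proposition~\ref{PropExactSeq}).

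For injectivity, suppose $f:X\to Y$ satisfies $F(f)=0$. By Lemma~\ref{LemAnBn} we have $\im(f)\in\cB_n$, so the mono $\im(f)\hookrightarrow Y$ is a morphism from an object of $\cB_n$ into $Y\in\cC_n$. By definition of $\cC_n$ this morphism vanishes, forcing $\im(f)=0$, hence $f=0$.

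For surjectivity, fix $\alpha:F(X)\to F(Y)$ in $\Ver_{p^n}$ and a surjection $\pi:P\twoheadrightarrow X$ with $P\in J_n\cap\Ob(\cA_n)$ from Lemma~\ref{LemTopSocle}. The composition $\beta=\alpha\circ F(\pi):F(P)\to F(Y)$ lifts to some $g:P\to Y$ via the isomorphism $\Hom(P,Y)\cong\Hom(F(P),F(Y))$. Let $K=\ker(\pi)\in\cA_n$; by Proposition~\ref{PropExactSeq} the sequence $F(K)\to F(P)\to F(X)\to 0$ is exact, and the composite $F(g|_K):F(K)\to F(P)\to F(Y)$ equals $\alpha\circ F(\pi)\circ F(K\hookrightarrow P)=0$. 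Faithfulness of $\overline\cT_n/\overline\cI_n\to\Ver_{p^n}$ (Theorem~\ref{ThmAbEnv} applied via Theorem~\ref{ThmFullEnv}) gives $g|_K\in\overline\cI_n$, so by Lemma~\ref{LemAnBn} its image lies in $\cB_n$. Since this image is a subobject of $Y\in\cC_n$, it vanishes, so $g|_K=0$ and $g$ factors as $g=f\circ\pi$ for a unique $f:X\to Y$. Finally, $F(f)\circ F(\pi)=F(g)=\beta=\alpha\circ F(\pi)$ and $F(\pi)$ is an epimorphism, yielding $F(f)=\alpha$.

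The main conceptual step is the passage $F(g|_K)=0\implies g|_K=0$: this uses faithfulness of the quotient functor together with the defining property of $\cC_n$ (no morphisms from $\cB_n$), and it is precisely the step that distinguishes $\cC_n$ from $\cA_n$ as a clean model for $\Ver_{p^n}$. Everything else is a standard projective-cover lifting argument, enabled by the $\SL_2$-specific fact (Lemma~\ref{LemSimpTilt}) that every object of $\cA_n$ lies in $\overline\cT_n$, which ensures $\cA_n$ is well-behaved inside $\overline\cT_n$.
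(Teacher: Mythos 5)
Your proof is correct, and the injectivity part is essentially the paper's faithfulness argument (Lemmas~\ref{LemSL2Ideals}, \ref{LemAnBn} plus the defining property of $\cC_n$). For surjectivity you take a genuinely different route. The paper works symmetrically: it fixes surjections and injections $P\twoheadrightarrow X\hookrightarrow Q$ and $R\twoheadrightarrow Y\hookrightarrow S$ from Lemma~\ref{LemTopSocle}, lifts the induced maps $F(P)\to F(R)$ and $F(Q)\to F(S)$, and shows there is a unique $g:X\to Y$ filling the resulting commuting square, by comparing the images of two equal maps $P\to S$. You instead use only the projective cover $P\twoheadrightarrow X$, lift to $g:P\to Y$, and then descend $g$ through $X=P/K$; the extra input you need is that $F(\pi)$ is an epimorphism and that $g$ kills $K$, both of which you extract from Proposition~\ref{PropExactSeq} applied to $0\to K\to P\to X\to 0$ (with $K\in\cA_n\subseteq\overline\cT_n$), faithfulness of $\overline\cT_n/\overline\cI_n\to\Ver_{p^n}$, Lemma~\ref{LemAnBn}, and the fact that $Y\in\cC_n$ admits no subobjects in $\cB_n$. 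The trade-off is that the paper's version stays within Section 5's toolbox (Lemma~\ref{LemTopSocle} and the cited \cite[Proposition~2.24]{BEO}), while yours imports Proposition~\ref{PropExactSeq} from Section 4 but avoids the four-object diagram and the image-comparison step. Both arguments are valid, and yours has the minor advantage of making the ``lift to the projective cover, descend by exactness'' mechanism completely explicit.
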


\begin{proof}
Let $F$ be the composition $\cC_n\to\overline\cT_n\to\Ver_{p^n}$. $F$ is faithful by Lemmas~\ref{LemSL2Ideals} and \ref{LemAnBn}, so we just need to show that $F$ is full. By Lemma~\ref{LemTopSocle} there are objects $P,Q,R,S\in J_n\cap\Ob(\cA_n)$ such that we have morphisms $P\twoheadrightarrow X\hookrightarrow Q$, $R\twoheadrightarrow Y\hookrightarrow S$. Since the images of these objects in $\Ver_{p^n}$ are projective and injective, any morphism $f:F(X)\to F(Y)$ induces morphisms $F(P)\to F(R)$ and $F(Q)\to F(S)$, which have preimages in $\Rep\SL_2$. We now show that there is a unique morphism $g:X\to Y$ making the diagram
\[\begin{tikzcd}
P \arrow[two heads]{r} \arrow{d} & X \arrow[hook]{r} \arrow{d}{g} & Q \arrow{d}\\
R \arrow[two heads]{r} & Y \arrow[hook]{r} & S
\end{tikzcd}\]
commute. For uniqueness, if there are two such morphisms $g_1$ and $g_2$ then the compositions $P\twoheadrightarrow X\xrightarrow{g_1}Y\hookrightarrow S$ and $P\twoheadrightarrow X\xrightarrow{g_2}Y\hookrightarrow S$ being equal implies $g_1=g_2$. For existence, suppose $A$ and $B$ are the images of $X\hookrightarrow Q\to S$ and $P\to R\twoheadrightarrow Y$ respectively, meaning they are also the images of $P\twoheadrightarrow X\hookrightarrow Q\to S$ and $P\to R\twoheadrightarrow Y\hookrightarrow S$ respectively. These two maps $P\to S$ are equal, so $A\cong B$ and we can take $g$ to be $X\twoheadrightarrow A\xrightarrow{\sim} B\hookrightarrow Y$. By the same argument, there is a unique morphism $F(X)\to F(Y)$ making the image of this diagram under $F$ commute. Both $F(g)$ and $f$ satisfy this, so $F(g)=f$, proving fullness.
\end{proof}

\begin{theorem}\label{ThmEquiv1}
The composition $\cC_n\to\overline\cT_n\to\Ver_{p^n}$ is an equivalence of abelian categories.
\end{theorem}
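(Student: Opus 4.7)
The plan is as follows. Lemma~\ref{LemCleanIso} has already established that the composition $F\colon\cC_n\to\overline\cT_n\to\Ver_{p^n}$ is fully faithful, so the theorem reduces to showing essential surjectivity; the abelian structure on $\cC_n$ is then inherited from the equivalence with the abelian category $\Ver_{p^n}$.

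A key preliminary observation is that $\cA_n\subseteq\overline\cT_n$, so in particular $\cC_n\subseteq\overline\cT_n$ and $F$ really is defined on $\cC_n$. Any $X\in\cA_n$ admits a composition series with simple factors $L_i$ for $i\leq p^n-2$; by Lemma~\ref{LemSimpTilt} each $L_i\otimes\St_{n-1}$ is tilting, and the argument at the start of Lemma~\ref{LemAnBn} (using the vanishing of $\Ext^1$ between tilting modules) shows that $X\otimes\St_{n-1}$ splits as the direct sum of these, and hence is itself tilting. This ensures that Proposition~\ref{PropExactSeq} applies to any short exact sequence in $\cA_n$.

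For essential surjectivity, given $Z\in\Ver_{p^n}$, I would pick a projective cover $F(P)\twoheadrightarrow Z$ and an injective envelope $Z\hookrightarrow F(Q)$. By Theorem~\ref{ThmMain}(\ref{ItemProj}), the indecomposable projectives (and by duality, injectives) of $\Ver_{p^n}(\SL_2)$ are images of $T(\lambda)$ with $p^{n-1}-1\leq\lambda\leq p^n-2$, so we may arrange $P,Q\in J_n\cap\Ob(\cA_n)$. The composition $F(P)\to F(Q)$ has image $Z$, and by the bijection $\Hom_{\SL_2}(P,Q)\cong\Hom_{\Ver_{p^n}}(F(P),F(Q))$ from \cite[Proposition~2.24]{BEO} it lifts to a morphism $f\colon P\to Q$ in $\Rep\SL_2$. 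Set $X=\im(f)$; the Serre property of $\cA_n$ inside $\Rep\SL_2$ forces $X\in\cA_n$, and the factorisation $P\twoheadrightarrow X\hookrightarrow Q$ together with Lemma~\ref{LemTopSocle} yields $X\in\cC_n$.

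The concluding step, which I expect to be the main obstacle, is identifying $F(X)$ with $Z$. The short exact sequences $0\to\ker(f)\to P\to X\to 0$ and $0\to X\to Q\to\coker(f)\to 0$ have all their terms in $\cA_n\subseteq\overline\cT_n$, so by Proposition~\ref{PropExactSeq} they are sent to exact sequences in $\Ver_{p^n}$; hence $F(X)$ coincides with $\im(F(f))$, which is $Z$ by construction. This exactness argument is precisely where the expanded construction via $\overline\cT_n$ pays off: $F$ is not a priori exact on all of $\Rep\SL_2$, nor even on all of $\overline\cT_n$, but Proposition~\ref{PropExactSeq} guarantees exactness along genuine exact sequences whose terms happen to lie in $\overline\cT_n$, which is exactly the situation here.
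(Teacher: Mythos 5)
Your proof is correct and shares the same structural skeleton as the paper's: full faithfulness is Lemma~\ref{LemCleanIso}, and for essential surjectivity both arguments lift the projective cover and injective hull of $Z\in\Ver_{p^n}$ to a morphism $f\colon P\to Q$ in $\Rep\SL_2$ and take $X=\im(f)$, which lands in $\cC_n$ by Lemma~\ref{LemTopSocle}. Where you genuinely diverge is in verifying $F(X)\cong Z$. You invoke Proposition~\ref{PropExactSeq} on the two short exact sequences $0\to\ker(f)\to P\to X\to 0$ and $0\to X\to Q\to\coker(f)\to 0$, noting that all their terms lie in the Serre subcategory $\cA_n\subseteq\overline\cT_n$, so $F$ carries them to exact sequences and $F(X)$ is forced to be the image of $F(f)$. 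The paper instead proves directly that $F$ sends $P\twoheadrightarrow X$ to an epimorphism (and dually $X\hookrightarrow Q$ to a monomorphism) by assuming a morphism $h$ with $h\circ F(g)=0$, tensoring with $\St_{n-1}$, using the splitting of short exact sequences of tilting modules from \cite[Corollary~E.2]{Jantzen}, Lemma~\ref{LemSimpTilt}, and Lemma~\ref{LemSL2Ideals}, and deducing $h=0$. The underlying mechanism — the sequence becomes a split sequence of tilting modules after tensoring with the Steinberg module — is the same idea that drives the proof of Proposition~\ref{PropExactSeq}, so in effect the paper reproves a special case in situ while you delegate to the general result. Given that Proposition~\ref{PropExactSeq} has already been established earlier in the paper, your route is shorter and arguably cleaner; the paper's route is more hands-on and avoids relying on the derived-category equivalence $K^b(\Tilt G)\simeq D^b(\Rep G)$ used to prove Proposition~\ref{PropExactSeq}.
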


\begin{proof}
Lemma~\ref{LemCleanIso} gives fully-faithfulness, so we just need to prove essential surjectivity. Let $F$ be the functor $\overline\cT_n\to\Ver_{p^n}$. For $X\in\Ver_{p^n}$, fix a projective cover $P\twoheadrightarrow X$ and injective hull $X\hookrightarrow Q$, and choose $P',Q'\in J_n\cap\Ob(\cA_n)$ with $F(P')=P$ and $F(Q')=Q$. There is a morphism $f:P'\to Q'$ in $\cA_n$ such that $F(f)$ equals $P\to X\to Q$. The image $X'\coloneqq\im(f')$ is in $\cC_n$ by Lemma~\ref{LemTopSocle}. We now show that if $g$ is the projection $P'\twoheadrightarrow X'$ then $F(g)$ is an epimorphism, and a dual proof shows that $F$ sends $X'\hookrightarrow Q'$ to a monomorphism. This will show that $F(X')$ is the image of $P\to Q$ and thus isomorphic to $X$.

Suppose $h:F(X')\to Y$ is some morphism in $\Ver_{p^n}$ with $hF(g)=0$. This means
\[(h\otimes\id_{F(\St_{n-1})})\circ F(g\otimes\id_{\St_{n-1}}):P\otimes F(\St_{n-1})\to F(X'\otimes\St_{n-1})\to Y\otimes F(\St_{n-1})\]
is also zero. Now $h\otimes\id_{F(\St_{n-1})}$ is a morphism between projective objects in $\Ver_{p^n}$, so it has a preimage $h':X'\otimes\St_{n-1}\to Y'$ such that $h'\circ(g\otimes\id_{\St_{n-1}})\in\overline\cI_n$, where $Y'\in J_n\cap\Ob(\cA_n)$ is a preimage of $Y\otimes F(\St_{n-1})$. Suppose for a contradiction that $h'\not\in\overline\cI_n$, meaning $X'\otimes\St_{n-1}$ has some summand $Z$ with highest weight at most $p^n-2$ on which $h'$ is non-zero by Lemma~\ref{LemSL2Ideals}. Since the simple factors of $P'$ have highest weights at most $p^n-2$, the short exact sequence
\[0\to\ker(g)\otimes\St_{n-1}\to P'\otimes\St_{n-1}\xrightarrow{g\otimes\id_{\St_{n-1}}}X'\otimes\St_{n-1}\to0\]
is a sequence of tiltings by Lemma~\ref{LemSimpTilt} and hence split by \cite[Corollary~E.2]{Jantzen}. This means that the restriction of $g\otimes\id_{\St_{n-1}}$ to the summand $Z$ is supported on an isomorphic summand of $P'\otimes\St_{n-1}$. Thus $h'\circ(g\otimes\id_{\St_{n-1}})$ restricted to this summand is non-zero, contradicting $h\in\overline\cI_n$ by Lemma~\ref{LemSL2Ideals}. So $h'\in\overline\cI_n$ and hence $h\otimes\id_{F(\St_{n-1})}=0$ in $\Ver_{p^n}$, which is only possible if $h=0$. Thus $hF(g)=0$ implies $h=0$ and so $F(g)$ is an epimorphism as required.
\end{proof}

\begin{remark}\label{RemNonExact}
$\cC_n$ is not a monoidal category, however if $X,Y\in\cC_n$ satisfy $X\otimes Y\in\cC_n$ then the image of $X\otimes Y$ in $\Ver_{p^n}$ is the tensor product of the images of $X$ and $Y$. Also note that the inclusion functor $\cC_n\hookrightarrow\Rep\SL_2$ is not exact. For example, for $p=3$ and $n=2$ like in Remark~\ref{RemNonFull}, we have an exact sequence $0\to L_4\to T_6\to L_4\to0$ in $\cC_2\simeq\Ver_9$, but this is not exact in $\Rep\SL_2$ since it has homology $L_6$.
\end{remark}

\subsection{Expressing $\cC_n$ and $\Ver_{p^n}$ as a Serre quotient}\label{SecSerreQuo} Recall that if $\cA$ is an abelian category and $\cB\subseteq\cA$ is a Serre subcategory, the Serre quotient $\cA/\cB$ is a category with objects the same as those in $\cA$ and morphism spaces given by
\[\Hom_{\cA/\cB}(X,Y)=\varinjlim\Hom_{\cA}(X',Y/Y').\]
Here, the colimit is of a directed system where $X',Y'$ range over all subobjects of $X,Y$ such that $X/X',Y'\in\cB$, and the maps are $\Hom(X',Y/Y')\to\Hom(X'',Y/Y'')$ whenever we have inclusion and projection morphisms $X''\hookrightarrow X'$ and $Y/Y'\twoheadrightarrow Y/Y''$.

For $X,Y\in\cA$, define $X_\star$ to be the intersection of all $X'\subseteq X$ with $X/X'\in\cB$, and define $Y^\star$ to be the quotient of $Y$ by the sum of all $Y'\subseteq Y$ with $Y'\in\cB$. If $\cB$ is closed under arbitrary sums or intersections of subobjects (for instance this holds if all objects in $\cA$ have finite length) then $X_\star$ and $Y^\star$ give a terminal object in the directed system above, and thus $\Hom_{\cA/\cB}(X,Y)\cong\Hom_\cA(X_\star,Y^\star)$. This means $X\cong X^\star\cong X_\star$ in $\cA/\cB$ by applying the Yoneda lemma to $\Hom_{\cA/\cB}(X,-)\cong\Hom_{\cA/\cB}(X_\star,-)$ and $\Hom_{\cA/\cB}(-,X)\cong\Hom_{\cA/\cB}(-,X^\star)$. Moreover, $X_\star^\star=(X_\star)^\star=(X^\star)_\star$ is the image of the morphism $X_\star\hookrightarrow X\twoheadrightarrow X^\star$, and any $f:X_\star\to Y^\star$ induces a unique morphism $f:X_\star^\star\to Y_\star^\star$. Thus, $\cB$ is a localising subcategory in the sense of \cite[\S 3.2]{Gabriel}, $\cA/\cB$ is equivalent to the full subcategory of $\cA$ with objects $X$ such that $X=X_\star^\star$, and under this equivalence the functor $\cA_n\to\cA_n/\cB_n$ corresponds to $(-)_\star^\star$.

In particular, $\cA_n/\cB_n\simeq\cC_n$, so we have proven Theorem~\ref{ThmSL2}. We now show in Theorem~\ref{ThmEquiv2} that the equivalence $\cA_n/\cB_n\simeq\Ver_{p^n}$ can be described more explicitly.

\begin{lemma}\label{LemHomIso}
Let $X,X',T\in\cA_n$ with $X'\subseteq X$ and $T\in J_n$.
\begin{enumerate}
\item If $X'\in\cB_n$ then composition with $X\twoheadrightarrow X/X'$ gives isomorphisms
\[\Hom(T,X)\cong\Hom(T,X/X')\text{ and }\Hom(X/X',T)\cong\Hom(X,T).\]
\item If $X/X'\in\cB_n$ then composition with $X'\hookrightarrow X$ gives isomorphisms
\[\Hom(T,X')\cong\Hom(T,X)\text{ and }\Hom(X,T)\cong\Hom(X',T).\]
\end{enumerate}
\end{lemma}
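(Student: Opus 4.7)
The plan is to funnel the short exact sequence $0 \to X' \to X \to X/X' \to 0$ through the abelian envelope functor $F\colon \overline{\cT}_n(\SL_2) \to \Ver_{p^n}$, using three ingredients already in hand. First, by Proposition~\ref{PropExactSeq}, $F$ sends the sequence (which lies in $\cA_n \subseteq \overline{\cT}_n$) to a short exact sequence in $\Ver_{p^n}$. Second, $F$ annihilates every object of $\cB_n$: Lemma~\ref{LemAnBn} gives $\Ob(\cB_n) = \overline{I}_n \cap \Ob(\cA_n)$, so the identity morphism of any $\cB_n$-object sits inside $\overline{\cI}_n = \overline{I}_n^{\mathrm{max}}$. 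Third, for $T \in J_n \cap \Ob(\cA_n)$ and $Y \in \cA_n$, the natural map $\Hom(T,Y) \to \Hom(F(T), F(Y))$ is an isomorphism, as cited from \cite[Proposition~2.24]{BEO} in the proof of Lemma~\ref{LemTopSocle}.

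For part~(1), the second ingredient gives $F(X') = 0$, so the first promotes the projection $X \twoheadrightarrow X/X'$ to an isomorphism $F(X) \xrightarrow{\sim} F(X/X')$ in $\Ver_{p^n}$. Applying the third ingredient to both $Y = X$ and $Y = X/X'$ and using its naturality in $Y$ then yields the desired $\Hom(T, X) \cong \Hom(T, X/X')$, with the identification induced by composition with $X \twoheadrightarrow X/X'$. For the companion isomorphism $\Hom(X/X', T) \cong \Hom(X, T)$ I would invoke duality: in $\Rep\SL_2$ we have $L_i^* \cong L_i$ and $T_j^* \cong T_j$, so the contravariant equivalence $(-)^*$ preserves $\cA_n$, $\cB_n$, and $J_n \cap \Ob(\cA_n)$, and rigidity gives $\Hom(Y, T) \cong \Hom(T, Y^*)$ for any $Y \in \cA_n$. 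Applying the first-iso argument to the dualised sequence $0 \to (X/X')^* \to X^* \to (X')^* \to 0$, whose quotient term $(X')^*$ still lies in $\cB_n$, delivers the result.

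Part~(2) is formally parallel: now $F(X/X') = 0$, so the inclusion $X' \hookrightarrow X$ induces $F(X') \xrightarrow{\sim} F(X)$, and the BEO isomorphism followed by the same duality trick produces $\Hom(T, X') \cong \Hom(T, X)$ and $\Hom(X, T) \cong \Hom(X', T)$. I do not anticipate any real obstacle; the only delicate point is confirming that the abstract isomorphisms produced at the $\Ver_{p^n}$-level really are induced by the stated inclusion or projection maps, and this falls out of the naturality of the third ingredient together with the functoriality of the short exact sequence in the first.
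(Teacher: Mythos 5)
Your proof is correct, but it is a genuinely different argument from the one in the paper. You push the short exact sequence $0\to X'\to X\to X/X'\to 0$ through the functor $F:\overline\cT_n\to\Ver_{p^n}$, invoke Proposition~\ref{PropExactSeq} to keep it exact, kill the $\cB_n$-term because $\Ob(\cB_n)=\overline I_n\cap\Ob(\cA_n)$ forces $F(X')=0$, and then transport the resulting isomorphism $F(X)\cong F(X/X')$ back along the BEO identification $\Hom(T,Y)\xrightarrow{\sim}\Hom(F(T),F(Y))$ for $T\in J_n\cap\Ob(\cA_n)$, with a duality argument ($L_i^*\cong L_i$, $\cA_n$, $\cB_n$, $J_n$ all closed under $(-)^*$ for $\SL_2$) supplying the contravariant isomorphisms. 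The paper instead never leaves $\Rep\SL_2$: it writes out the long exact $\Hom$/$\Ext$ sequences associated to $0\to X'\to X\to X/X'\to 0$ and shows by rigidity and a weight count that $\Hom(X',T)=\Hom(T,X')=\Ext^1(T,X')=0$, using Lemma~\ref{LemSimpTilt} to place $T^*\otimes X'$ in $I_n$, then \cite[Lemma~5.3.3]{tensorideals1} for the $\Hom$-vanishing and \cite[Corollary~E.2]{Jantzen} for the $\Ext^1$-vanishing. Your route is softer and recycles the heavy lifting already done in Proposition~\ref{PropExactSeq}, at the cost of calling on the target category $\Ver_{p^n}$ to prove a statement whose ultimate purpose (Theorem~\ref{ThmEquiv2}) is to build an equivalence with $\Ver_{p^n}$; there is no actual circularity, since Proposition~\ref{PropExactSeq} and the BEO isomorphism are established earlier and independently, but the paper's elementary computation makes the logical dependency more transparent. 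One minor point of hygiene: when you invoke duality for $\Hom(X/X',T)\cong\Hom(X,T)$, the dualized sequence $0\to(X/X')^*\to X^*\to(X')^*\to 0$ has its $\cB_n$-term as the \emph{quotient}, so what you are really doing is running the part~(2) argument for the dual data; it would be cleaner to say so explicitly rather than calling it ``the first-iso argument,'' though the substance is fine since $F((X')^*)=F(X')^*=0$ in either phrasing.
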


\begin{proof}
We prove statement (1), and (2) follows from a dual argument. We have long exact sequences
\begin{align*}
&0\to\Hom(X/X',T)\to\Hom(X,T)\to\Hom(X',T)\\
&0\to\Hom(T,X')\to\Hom(T,X)\to\Hom(T,X/X')\to\Ext^1(T,X')
\end{align*}
so it suffices to show $\Hom(X',T)=\Hom(T,X')=\Ext^1(T,X')=0$. If $X'\in\cB_n$ then $T^*\otimes X'\in I_n$ by Lemma~\ref{LemSimpTilt}, so the summands of $T^*\otimes X'$ have highest weights between $p^n-1$ and $2p^n-4$. Thus by \cite[Lemma~5.3.3]{tensorideals1} we have $\Hom(X',T)\cong\Hom(T^*\otimes X',\unit)=0$ and $\Hom(T,X')\cong\Hom(\unit,T^*\otimes X')\cong0$. Since $\unit$ is a tilting module and tilting modules have no extensions by \cite[Corollary~E.2]{Jantzen}, we also have $\Ext^1(T,X')\cong\Ext^1(\unit,T^*\otimes X')=0$.
\end{proof}

\begin{theorem}\label{ThmEquiv2}
We have equivalences $\cC_n\simeq\cA_n/\cB_n\simeq\Ver_{p^n}$ such that the following diagram of functors commutes:
\[\begin{tikzcd} \cC_n \arrow[hook]{r} \arrow[swap]{dr}{\sim} & \cA_n \arrow[hook]{r} \arrow[two heads]{d} & \overline\cT_n \arrow{d} \\
& \cA_n/\cB_n \arrow{r}{\sim} & \Ver_{p^n} \end{tikzcd}\]
\end{theorem}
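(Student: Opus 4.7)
The plan is to combine Theorem~\ref{ThmEquiv1} with the Serre-quotient description laid out in Section~\ref{SecSerreQuo}, and then to extract the commuting diagram from the universal property of the Serre quotient.

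First I would note that every object of $\cA_n$ has finite length, because its simple composition factors lie among the finitely many $L_i$ with $i<p^n-1$. Hence $\cB_n$ is closed under arbitrary sums and intersections of subobjects in $\cA_n$, so the general setup of Section~\ref{SecSerreQuo} applies: $\cA_n/\cB_n$ is equivalent to the full subcategory of $\cA_n$ on objects with $X=X_\star^\star$, and the quotient functor corresponds to $X\mapsto X_\star^\star$.

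Next I would identify this full subcategory with $\cC_n$. The condition $X=X_\star^\star$ is equivalent to $X=X_\star=X^\star$, since $X_\star^\star$ sits as a quotient of $X_\star$ and a subobject of $X^\star$. If $X=X_\star$ and $f\colon X\to B$ with $B\in\cB_n$ is nonzero, then $\im f\in\cB_n$ (as $\cB_n$ is Serre), so $\ker f$ is a proper subobject of $X$ with quotient in $\cB_n$, contradicting $X=X_\star$; thus $\Hom(X,B)=0$. Dually $X=X^\star$ gives $\Hom(B,X)=0$, so $X\in\cC_n$. The reverse inclusion is immediate from the definitions of $X_\star$ and $X^\star$. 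Combined with Theorem~\ref{ThmEquiv1}, this gives $\cA_n/\cB_n\simeq\cC_n\simeq\Ver_{p^n}$.

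For the commuting diagram I would apply the universal property of the Serre quotient to the composition $F\colon\cA_n\hookrightarrow\overline\cT_n\to\Ver_{p^n}$. Since $\cA_n$ is a Serre subcategory of $\Rep\SL_2$, every short exact sequence in $\cA_n$ is exact in $\Rep\SL_2$, so Proposition~\ref{PropExactSeq} makes $F$ exact; by Lemma~\ref{LemAnBn} every $B\in\cB_n$ lies in $\overline I_n$, so $F$ annihilates $\cB_n$. The universal property then produces a unique exact $\overline F\colon\cA_n/\cB_n\to\Ver_{p^n}$ factoring $F$. Transported along the equivalence $\cA_n/\cB_n\simeq\cC_n$, the restriction of $\overline F$ to $\cC_n\subseteq\cA_n$ (where $X=X_\star^\star$) recovers $F|_{\cC_n}$, which is the equivalence of Theorem~\ref{ThmEquiv1}; so $\overline F$ is itself an equivalence and all the triangles in the diagram commute.

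The main obstacle I anticipate is the identification of the subcategory $\{X\in\cA_n:X=X_\star^\star\}$ with $\cC_n$; the remaining steps are either Serre-quotient bookkeeping or direct consequences of results already proved.
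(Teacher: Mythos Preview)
Your proof is correct but takes a genuinely different route from the paper. The paper constructs the functor $\cA_n/\cB_n\to\Ver_{p^n}$ explicitly: recalling that objects of $\Ver_{p^n}$ are of the form $\Hom(P,X)$ for $P=\bigoplus_{j=p^{n-1}-1}^{p^n-2}T_j$, it uses Lemma~\ref{LemHomIso} to show that $\Hom(P,-)$ is well-defined on the colimit description of morphisms in the Serre quotient, and then invokes $\cC_n\simeq\cA_n/\cB_n$ from Section~\ref{SecSerreQuo} together with Theorem~\ref{ThmEquiv1} to conclude it is an equivalence. You instead obtain the functor abstractly from the universal property of the Serre quotient, using Proposition~\ref{PropExactSeq} to certify exactness of $\cA_n\to\Ver_{p^n}$ and Lemma~\ref{LemAnBn} to see that $\cB_n$ is killed; Lemma~\ref{LemHomIso} plays no role. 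Your approach is cleaner and more portable, since it sidesteps the explicit $\Hom(P,-)$ model of $\Ver_{p^n}$; the paper's approach, on the other hand, makes the equivalence concrete at the level of morphism spaces, which is useful if one wants to compute with it. Your identification of $\{X:X=X_\star^\star\}$ with $\cC_n$ is exactly what the paper asserts in Section~\ref{SecSerreQuo} just before the theorem, so you are not adding anything new there, but your justification is correct.
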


\begin{proof}
Let $P=T_{p^{n-1}-1}\oplus T_{p^{n-1}}\oplus\cdots\oplus T_{p^n-2}$, so that the image of an object $X\in\overline\cT_n$ in $\Ver_{p^n}=\cC(\cT_n/\cI_n,J_n/I_n)$ is $\Hom_{\SL_2}(P,X)$ as described in Section~\ref{SecAbEnv}. We define a functor $\Hom(P,-):\cA_n/\cB_n\to\Ver_{p^n}$ sending a morphism $f\in\Hom_{\cA/\cB}(X,Y)$ to the composition
\[\Hom(P,X)\xrightarrow{\sim}\Hom(P,X')\xrightarrow{f'\circ-}\Hom(P,Y/Y')\xrightarrow{\sim}\Hom(P,Y)\]
for some representative $f':X'\to Y/Y'$ as in Section~\ref{SecSerreQuo}, with the isomorphisms coming from Lemma~\ref{LemHomIso}. To show this is independent of the choice of $f'$, suppose $X''\subseteq X'$ and $Y''\supseteq Y'$, and write $f''$ for the image of $f'$ under the map $\Hom(X',Y/Y')\to\Hom(X'',Y/Y'')$. Since $X'/X''$ is a submodule of $X/X''$, it is in $\cB_n$, and we have a dual result for $Y'$ and $Y''$. Thus we have an isomorphism $\Hom(P,X')\cong\Hom(P,X'')$ by Lemma~\ref{LemHomIso}, meaning $f'$ and $f''$ give the same value for $f$ under the functor $\Hom(P,-)$. That this is an equivalence follows from the equivalences $\cC_n\simeq\cA_n/\cB_n$ described in Section~\ref{SecSerreQuo} and $\cC_n\simeq\Ver_{p^n}$ from Theorem~\ref{ThmEquiv1}.
\end{proof}

\end{document}